\journal{arXiv}
\newlength{\shor}
\def\tu#1{\langle #1\rangle}
\newcommand{\up}{\uparrow}
\newcommand{\down}{\downarrow}
\newcommand{\upts}{\uparrow}
\newcommand{\downts}{\downarrow}
\def\ess{\mathcal{E}}
\def\eess#1{\underline{#1}}
\newtheorem{theorem}{Theorem}
\newtheorem{lemma}{Lemma}
\newtheorem{observation}{Observation}
\newdefinition{remark}{Remark}
\newdefinition{example}{Example}
\newcommand{\Asso}{\textsc{Asso}}
\newcommand{\Hyper}{\textsc{Hyper}}
\newcommand{\GreConD}{\textsc{GreConD}}
\newcommand{\GreEss}{\textsc{GreEss}}
\newcommand{\Tiling}{\textsc{Tiling}}
\newcommand{\PaNDa}{\textsc{PaNDa}}
\newcommand{\ComputeIntervals}{\textsc{ComputeIntervals}}
\begin{document}

\begin{frontmatter}

%% Title, authors and addresses

%% use the tnoteref command within \title for footnotes;
%% use the tnotetext command for the associated footnote;
%% use the fnref command within \author or \address for footnotes;
%% use the fntext command for the associated footnote;
%% use the corref command within \author for corresponding author footnotes;
%% use the cortext command for the associated footnote;
%% use the ead command for the email address,
%% and the form \ead[url] for the home page:
%%
%% \title{Title\tnoteref{label1}}
%% \tnotetext[label1]{}
%% \author{Name\corref{cor1}\fnref{label2}}
%% \ead{email address}
%% \ead[url]{home page}
%% \fntext[label2]{}
%% \cortext[cor1]{}
%% \address{Address\fnref{label3}}
%% \fntext[label3]{}

\title{From-Below Approximations in Boolean Matrix Factorization: Geometry and New Algorithm}

%% use optional labels to link authors explicitly to addresses:
%% \author[label1,label2]{<author name>}
%% \address[label1]{<address>}
%% \address[label2]{<address>}

\author{Radim Belohlavek, Martin Trnecka}

\address{Data Analysis and Modeling Lab\\
Dept. Computer Science, Palack\'y University,  Czech Republic\\
%Palack\'y University, 17. listopadu 12, 771 46 Olomouc, Czech Republic\\
   e-mail: radim.belohlavek@acm.org, martin.trnecka@gmail.com
  }

\begin{abstract}
We present new results on Boolean matrix factorization and a new algorithm 
based on these results. 
The results emphasize the significance of factorizations that provide from-below 
approximations of the input matrix. While the previously proposed algorithms  
do not consider the possibly different significance of different matrix entries, 
our results help measure such significance and suggest where to focus when computing factors.
An experimental evaluation  of the new algorithm on both synthetic and real data demonstrates its good performance
in terms of good coverage by the first $k$ factors as well as a small number of factors
needed for exact decomposition and indicates that the algorithm outperforms the available ones
 in these terms.
We also propose future research topics.
\end{abstract}

\begin{keyword}
%\keywords{Boolean matrix factorization \and Closure structures
%\and Galois connections}
Boolean matrix \sep Matrix decomposition \sep Closure structures
%\sep Galois connections 
\sep Concept lattice \sep Approximation algorithm
%% keywords here, in the form: keyword \sep keyword

%% MSC codes here, in the form: \MSC code \sep code
%% or \MSC[2008] code \sep code (2000 is the default)

\end{keyword}

\end{frontmatter}

%%
%% Start line numbering here if you want
%%
% \linenumbers

%% main text

\section{Introduction}

Boolean matrix factorization (BMF, called also Boolean matrix decomposition)
is becoming an established method for analysis and preprocessing of data. 
%RR: (CITACE?). 
The existing BMF methods are based on various types of heuristics and
approximation techniques, the fundamental reason being that the main
computational problems involved are known to be provably hard.
%RR: (NP-complete 
%decision problems, NP-hard or non-approximable within a constant factor optimization
%problems, see Section \ref{sec:p}). 
The heuristics employed, however,  use only a limited theoretical insight 
regarding BMF. 
 We show in this paper that a better understanding of the 
geometry of Boolean data results in a better understanding of BMF,
 theoretically justified heuristics, and better algorithms.

In particular,
we present new results in BMF derived from 
examining the closure and order-theoretic structures related to Boolean data, namely
the lattice of all fixpoints (so-called concept lattice) of the Galois connections 
associated to the input matrix.
Such viewpoint makes explicit the essence of BMF as a covering problem
and emphasizes one type of factorizations we call  from-below
factorizations. 
%Of the two possible types of error corresponding to the 
%``uncovered'' (0 instead of 1) and ``overcovered'' (1 instead of 0) entries 
%of the input matrix, the from-below factorizations commit only the first.
Such factorizations and some related notions were examined in some previous papers,
see Section \ref{sec:rw}.
%R4: predchozi vetu pozdeji odstranit
%Our viewpoint makes it possible to examine the role of 
%input matrix entries in BMF. 
While all the existing BMF methods consider the entries containing 1s in the input matrix essentially 
equally important, we propose to differentiate the role of such entries.
% examine %the statistical significance of appropriate 
%characteristics 
%of the entries that differentiate their role.
In particular, we examine the entries that are essential for BMF in that
their coverage by factors guarantees exact decomposition of the input matrix $I$
by these factors.
Crucial in our approach are intervals in the concept lattice associated to $I$.
We show that every such interval contains just the factors covering a certain rectangle (block full of 1s) in $I$
and that the intervals form reasonable subspaces for the search of factors.
We present a new BMF algorithm which is based on these results and computes from-below factorizations.
It turns out from experimental evaluation on both synthetic and real data that on average and on most real datasets,
the new algorithm outperforms the existing BMF algorithms.
Moreover, we clarify some connections between the existing approaches to BMF 
%R4: odstraneno: and some related problems
and argue that the closure and order-theoretic structures utilized 
%R4: reformulace
in this paper, which
make transparent the geometry of BMF,  represent
a useful framework for a reasonable theoretical analysis of the various BMF problems.
% It turns out from experimental evaluation
%on both synthetic and real data that
%the new algorithm outperforms both the two basic existing algorithms,
%namely Algorithm~2 from \citep{BeVy:Dof}  and 
%\Asso \citep{MiMiGiDaMa:TDBP} in terms of quality of decomposition as well as time cost.
%
%For example,
%while \Asso commits both types of error and delivers therefore general types
%of decompositions, our experiments indicate that it is significantly outperformed 
%in terms of quality of decompositions by the new algorithm
%proposed in this paper as well as by Algorithm 2 from \citep{BeVy:Dof}, 
%even though the latter two  delived only the from-below approximations of the input matrix.
%
The paper is concluded by discussing future research topics.
%R4: zbytek odstranen
%, namely regarding
%further heuristics, general types 
%of decompositions %M: preklep committing
%committing both types of errors, and extension of the present
%methods for general types of data (ordinal and semiring-valued). 
%RR: , and three-way).

%%%%%%%%%%%%%%%%%%%%%%%%%%%%%%%%%%%%%%%%%%%%%%%%%%%%%%%
\section{Preliminaries and Related Work}
\label{sec:p}

%-------------------------------------------------------------
\subsection{Notation and Basic Notions}\label{sec:nbn}
%\paragraph{Notation and Basic Notions}
%RR: [DOPLNIT REFERENCE]
Throughout this paper, we 
\[
\text{denote by $I$  an $n\times m$ Boolean matrix,}
\]
interpreted primarily as an object-attribute incidence (hence the symbol $I$) matrix,
i.e. the entry $I_{ij}$ corresponding to the row $i$ and the column $j$ is either $1$
or $0$, indicating that the object $i$ does or does not have the attribute $j$. 
%We use $I$ because very often, the matrix describes an \emph{i}ncidence
%relation between $n$ objects, which correspond to the rows, and $m$ attributes,
%which correspond to the columns of $I$.
The set of all $n\times m$ Boolean matrices is denoted by $\{0,1\}^{n\times m}$.
The $i$th row and $j$th column vectors of $I$ are denoted by $I_{i\_}$ and $I_{\_j}$,
respectively.
A general aim in BMF is to find for a given $I\in\{0,1\}^{n\times m}$
(and possibly other given parameters) matrices $A\in\{0,1\}^{n\times k}$ and
$B\in\{0,1\}^{k\times m}$ for which 
\begin{equation} 
  \label{eqn:IAM}
  I \mbox{ (approximately) equals } A\circ B,
  \text{where}   (A\circ B)_{ij} = \max_{l=1}^k \min(A_{il},B_{lj}),
\end{equation}
i.e. $\circ$ is the Boolean matrix product.
% given by
%\begin{equation}
%  \label{eqn:BP} 
%  (A\circ B)_{ij} = \max_{l=1}^k \min(A_{il},B_{lj}).
%\end{equation}
A decomposition of $I$ into $A\circ B$ may be interpreted as a discovery of
$k$ factors that exactly or approximately explain the data: 
interpreting $I$, $A$, and $B$ as the object-attribute, object-factor, and factor-attribute
matrices, 
the model (\ref{eqn:IAM}) reads: the object $i$ has the attribute $j$ if and only if 
there exists factor $l$ such that $l$ applies to $i$ and $j$ is one of the particular manifestations
of $l$.
%$I$, $A$, and $B$ are naturally called the object-attribute matrix, the object-factor (or usage) matrix,
%and the factor-attribute (or basis vector) matrix \cite{BeVy:Dof,MiMiGiDaMa:TDBP}. 
%
%$A_{il}=1$ indicates that 
%factor $l$ applies to object $i$ while $B_{lj}$ indicates that attribute $j$ is a particular
%manifestation of factor $l$ (think of  person A as object, ``being fluent in English'' as attribute, 
%and ``having good education'' as factor). 
The least $k$ for which an exact decomposition $I=A\circ B$ exists is called the 
\textit{Boolean rank} (Schein rank) of $k$ and is denoted by $\mathrm{rank}_\mathrm{B}(I)$.
% \cite{BeVy:Dof,Kim:BMTA,MiMiGiDaMa:TDBP}.
%Then, according to (\ref{eqn:BP}),
%the factor model reads: object $i$ has attribute $j$ if and only if 
%there exists factor $l$ such that $l$ applies to $i$ and $j$ is a partcular manifestation
%of $l$.
%$I$, $A$, and $B$ are naturally called the object-attribute matrix, the object-factor (or usage) matrix,
%and the factor-attribute (or basis vector) matrix \cite{BeVy:Dof,MiMiGiDaMa:TDBP}. 

Recall that the $L_1$-norm (Hamming weight in case of Boolean matrices) 
$||\cdot||$ and the corresponding metric $E(\cdot,\cdot)$ are defined
for 
%RR: every 
$C,D\in\{0,1\}^{n\times m}$ by
% is given by $$|| C || =\textstyle\sum_{i=1,j=1}^{m,n} |C_{ij}|.$$
%The induced matrix distance, that is conveniently used for measuring
%error in matrix decompositions, is given by 
\begin{equation}\label{eqn:error}
%|| C || =\textstyle\sum_{i=1,j=1}^{m,n} |C_{ij}|
%\text{ and }
%  E(C,D)=\textstyle
%  ||C-D ||=\sum_{i=1,j=1}^{m,n} |C_{ij}-D_{ij}|.
|| C || =\textstyle\sum_{i,j=1}^{m,n} |C_{ij}|
\quad\text{ and }\quad
  E(C,D)=\textstyle
  ||C-D ||=\sum_{i,j=1}^{m,n} |C_{ij}-D_{ij}|.
\end{equation}
%
%NAKONEC UPRAVIT
The following variants of the BMF problem, relevant to this paper, are considered in the literature.
\begin{itemize}
  \item[--]
  \emph{Discrete Basis Problem} (DBP, \citep{MiMiGiDaMa:TDBP}):\\
   Given $I\in\{0,1\}^{n\times m}$ and a positive integer $k$,
   find $A\in\{0,1\}^{n\times k}$ and $B\in\{0,1\}^{k\times m}$ 
   that minimize $||I-A\circ B||$.
  \item[--]
%R4: pridana carka za AFP
  \emph{Approximate Factorization Problem} (AFP, \citep{BeVy:Dof}):\\ 
   Given $I$ and prescribed error $\varepsilon\geq 0$, 
  find $A\in\{0,1\}^{n\times k}$ and $B\in\{0,1\}^{k\times m}$ with $k$ as small as 
  possible such that $||I-A\circ B||\leq \varepsilon$. 
\end{itemize}
These two problems reflect two important views on BMF. The first one emphasizes
the importance of the first $k$ (presumably most important) factors. The second 
one emphasizes the need to account for (and thus to explain) a prescribed 
portion of data, which is specified by $\varepsilon$.

%-------------------------------------------------------------
\subsection{Related Work}\label{sec:rw}
%\paragraph{Related Work}
%RR:
%As matrix decomposition methods in general represent a rather broad and thoroughly studied
%subject whose overview is beyond the scope of this paper
%we refer the reader to [REFERENCE? \citep{Gol:MC}] and limit ourselves to the work in BMF directly related to 
%our paper.
Matrix decompositions represent an extensive subject whose coverage is beyond
the scope of this paper. A good overview from BMF viewpoint is found e.g. in
\citep{MiMiGiDaMa:TDBP}.
Except for the area of Boolean matrix theory itself, see e.g. \citep{Kim:BMTA}, 
relevant results are traditionally presented in the literature on
binary relations and graph theory, see e.g. \citep{BrRy:CMT,Sch:RM}. These results may be 
%M: preklep translated 
translated to the results on Boolean matrices
due to the various one-one correspondences between the involved notions, such as those connecting Boolean matrices,
bipartite graphs, and binary relations, and pertain mostly to combinatorial and
computational complexity questions.
An important related area is formal concept analysis (FCA) \citep{GaWi:FCA}, in which Boolean matrices are
represented by so-called formal contexts, i.e. binary relations between objects and attributes.
FCA provides solid lattice-theoretical foundations which are utilized in our paper.

Decompositions of Boolean matrices using decomposition methods designed originally
for real-valued data and various modifications of these methods appear in a number
of papers.
\citep{TaMiGiMa:Widybd}  compares several approaches to assessment of dimensionality
of Boolean data, concluding among other observations that a principal problem with
applying to Boolean data the methods designed originally for real-valued data is
the lack of interpretability. Similar observations were presented  by 
other authors as well, emphasizing the need for methods particularly %MM: tohle bylo odkomentovane, takze v textu bylo ceske slovo, asi omilem smazno tailo: preklep tailored
tailored to Boolean data.
Among the first works on applications of 
BMF involving the Boolean matrix product in data analysis are \citep{Nau:Sc,Naea:SMahlas}, 
in which the authors have already been aware of the
provable computational difficulty (NP-hardness) of the decomposition problem due to 
NP-hardness of the set basis problem \citep{Sto:Sbpinpc}.
%RR: podrobneji, Nau:Sc vi, ze zjistit, zda rankB<=k je NP-complete
%Jejich modifikace pro binarni data
%First works: \citep{Nau:Sc}, BMDP?
%Boolean matrices \cite{Kim:BMTA}

The interest in BMF in data mining is primarily due to the work of Miettinen et al.
In particular, the DBP, the corresponding complexity results,
and the %M: mezera za makrem, opraveno
\Asso\ algorithm discussed below appeared in \citep{MiMiGiDaMa:TDBP}. 
%The \Asso algorithm and various problems regarding BMF are discussed in several other
%papers co-authored by Miettinen, e.g. \citep{Mie:Sbmf,MiVr:Mosbmf}
In \citep{GeGoMi:Td}, they authors examine ``tiling'' of Boolean data
and various related problems, their complexity, and algorithms. 
Tiling is closely related to BMF as it corresponds to the from-below factorizations we investigate in this paper
and is discussed  in more detail in Section \ref{sec:ac}. 
In \citep{BeVy:Dof}, our previous paper, we showed how to use formal concepts (i.e. fixpoints of Galois connections) of 
Boolean matrices as factors,
proved their optimality for exact factorizations, described transformations between
attribute and factor spaces, 
%RR: made explicit a link to set-cover-like probems, 
and proposed 
two BMF algorithms discussed below.
In \cite{Xiea:Stdoh}, the authors investigate the problem of summarizing transactional databases
by so-called hyperrectangles, examine the computational complexity of the problems involved, provide 
%R4: misto: two algorithms called \Hyper\ and \HyperPlus, and discuss related problems. dano:
the \Hyper\ algorithm and discuss related problems.
The summarizations involved may be rephrased as Boolean matrix decompositions and this approach %R4: and the algorithms are -> is
is discussed in more detail in Sections \ref{sec:fba} and  \ref{sec:e}.
Directly relevant to our paper is also \cite{LuOrPe:Mtpbdn}, where the authors propose an algorithm, called \PaNDa,
for computing top-$k$ patterns in Boolean datasets. The algorithm employs the minimum %M: preklep description
description length principle
and is discussed in Section \ref{sec:e}.
In particular, we use the algorithms proposed in the above five papers, namely 
\citep{BeVy:Dof,GeGoMi:Td,LuOrPe:Mtpbdn,MiMiGiDaMa:TDBP,Xiea:Stdoh},
in the experimental evaluation of the algorithm proposed in our paper.

Further work relevant to BMF includes other Miettinen's papers, such as
\citep{Mi:Bccrmd} in which the Boolean CX and CUR decompositions,  their complexity, and algorithms
are studied,
\citep{Mi:Sbmf} which investigates the issue of sparsity in BMF,
 \citep{MiVr:Mosbmf} where authors propose a general strategy to employ the minimum description
length principle in BMF in selecting the number of factors and apply it to \Asso,
and \citep{Mie:Fjsbmf} which examines the problem of finding common 
%R4: factor -> factors
factors of two and more matrices.
Measuring differences between summarizations of data  with itemsets and tiles is an important topic, for which
we refer to \citep{Ta:Caomdbdmr} and the references therein.
\citep{Mpr:scbcfm} presents a useful survey containing several results on complexity
and various ranks for Boolean matrices which we do not address in detail in this paper. 
Regarding ranks, the reader is also referred to \citep{MiMiGiDaMa:TDBP}; for complexity
of the various problems related to BMF, the reader is referred to the above papers and to
\citep{VaAtGu:Trmpfmdsf}. 
%The provable computational difficulty of the various problems is 
%mostly proved by reduction to a well-known difficult problem such as the set basis problem 
%mentioned above or other ones \citep{Hro:AHP}.
%
In addition to the above works, interesting applications of BMF have recently been
presented to role mining \citep{Luea:Carmebmd} utilizing certain extensions of BMF,
see also \citep{Luea:Obmd,VaAtGu:Trmpfmdsf}, and reducing dimensionality in
 classification of Boolean data \citep{Out:Bfadpml}, resulting in improved classification
accuracy.
%
%Due to restricted scope, we omit other references to the existing, growing literature
%related to BMF.

%\citep{GiMaSe:Gctd}   BeVy:Dof,GeGoMi:Td,Mi:Sbmf,MiMiGiDaMa:TDBP
% \citep{Frea:Bfaann}
%? A Simple Algorithm for Topic Identification in 0-1 data \citep{SeBiMa:Satid}
%PROJIT: \citep{Lub:Bbpcspr}

%%%%%%%%%%%%%%%%%%%%%%%%%%%%%%%%%%%%%%%%%%%%%%%%%%%%%%%
\section{From-Below Approximations and Geometry of BMF}
\label{sec:fba}
%In this section, 
%we define and examine the from-below factorizations, present a geometric insight regarding 
%BMF and the from-below factorizations in terms of the associated closure and order-theoretic structures.

%-------------------------------------------------------------
%\subsection{Geometry of decomposition, two components of error,
%and from-below decompositions}
%\label{sec:gd}

\subsection{Factorizations as Coverings and From-Below Approximations}
%\label{sec:gd}
%\paragraph{Factorizations as Superpositions of Rectangles}
We first make explicit the following view of decompositions,
present implicitly in \citep{BeVy:Dof}. For matrices $J_1$ and $J_2$, we put
\begin{equation}\label{eqn:cont}
J_1\leq J_2 \mbox{ ($J_1$ is contained in $J_2$) \quad  if{}f } \quad (J_1)_{ij}\leq (J_2)_{ij}  \mbox{ for every $i,j$.}
\end{equation}
A matrix $J\in\{0,1\}^{n\times m}$ is called \emph{rectangular}
(a rectangle, for short) if 
$J=C\circ D$ for some  $C\in\{0,1\}^{n\times 1}$ (column)
and $D\in\{0,1\}^{1\times m}$ (row), i.e. $J$ is the cross-product 
of two vectors.
Clearly, this means that
upon suitable permutations of columns and rows the 1s in $J$
form a rectangular area.
% and that the rows and columns of this area 
%are just the sets $\{i \mid C_{i}=1\}$ and $\{j \mid D_j=1\}$.
We say that $J$ (or, the pair $\tu{C,D}$ for which $J=C\circ D$)
\emph{covers} $\tu{i,j}$ if $J_{ij}=1$ (equivalently, $C_i=1$ and $D_j=1$).

%Recall that the componentwise $\vee$-composition of Boolean matrices 
%is defined by $(C\vee D)_{ij}=\max(C_{ij},D_{ij})$ and that
%we put 
%$$C\leq D \mbox{ ($C$ is contained in $D$)  if{}f } C_{ij}\leq D_{ij}  \mbox{ for every $i,j$.}
%$$
%The folowing easy-to-see observation provides an alternative useful 
%way to look at Boolean matrix (de)compositions.
%
%The following lemma presents a geometric interpretation of a Boolean matrix product,
%namely, a product $A\circ B$ may be lookd at as a $\vee$-composition
%of rectangles contained in $I$
%(note that a part of this observation is present in the proof of \cite[Theorem 2]{BeVy:Dof}).
%
%\smallskip

\begin{observation}
   \label{thm:rec}
  The following conditions are equivalent for any $I\in\{0,1\}^{n\times m}$.
  \begin{itemize}
     \item[\emph{(a)}]
     $I=A\circ B$ for some $A\in\{0,1\}^{n\times k}$ and $B\in\{0,1\}^{k\times m}$.

     \item[\emph{(b)}]
     There exist rectangles $J_1,\dots,J_k\in \{0,1\}^{n\times m}$ such that
     $I=J_1\vee \cdots\vee J_k$, i.e.
     $I_{ij}=\max_{l=1}^k ({J_l})_{ij}$.

     \item[\emph{(c)}]
     There exist rectangles $J_1,\dots,J_k\in \{0,1\}^{n\times m}$ contained in $I$ such that
      $I_{ij}=1$ if and only if $\tu{i,j}$ is covered by some $J_l$.
\end{itemize}
%
%  if and only if $I$  is a $\max$-superposition
%  of rectangles $J_1,\dots,J_k$ contained in $I$, i.e. for every $i,j$ we have
%  $I_{ij}=\max_{l=1}^k ({J_l})_{ij}$. In particular, one may put
%   $J_l=A_{\_l}\circ B_{l\_}$.
\end{observation}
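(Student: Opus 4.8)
The plan is to prove the three conditions equivalent by establishing (a) $\Leftrightarrow$ (b) and (b) $\Leftrightarrow$ (c), with the whole argument resting on a single structural identity: the Boolean product $A\circ B$ is nothing but the pointwise maximum of the $k$ rectangles obtained by pairing the $l$th column $A_{\_l}$ of $A$ with the $l$th row $B_{l\_}$ of $B$.

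For (a) $\Rightarrow$ (b), given $A$ and $B$ I would set $J_l = A_{\_l}\circ B_{l\_}$ for $l=1,\dots,k$. Each $J_l$ is a rectangle by definition, being the cross-product of a column and a row. Unwinding the definition of $\circ$ in (\ref{eqn:IAM}), $(A\circ B)_{ij}=\max_{l}\min(A_{il},B_{lj})=\max_l (J_l)_{ij}$, so $I=J_1\vee\cdots\vee J_k$. The converse (b) $\Rightarrow$ (a) is the same computation read backwards: writing each rectangle as $J_l=C^{(l)}\circ D^{(l)}$ with $C^{(l)}$ a column and $D^{(l)}$ a row, I assemble $A$ from the columns $C^{(l)}$ and $B$ from the rows $D^{(l)}$, and the identity above yields $A\circ B=\bigvee_l J_l=I$.

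For (b) $\Leftrightarrow$ (c), the point is that the containment requirement in (c) comes for free. For (b) $\Rightarrow$ (c) I keep the very same rectangles: since $J_l\leq J_1\vee\cdots\vee J_k=I$, each $J_l$ is automatically contained in $I$, and $I_{ij}=\max_l(J_l)_{ij}=1$ holds exactly when some $J_l$ covers $\tu{i,j}$. For (c) $\Rightarrow$ (b) I verify $I=\bigvee_l J_l$ entrywise: where $I_{ij}=1$, condition (c) supplies a covering $J_l$, forcing the maximum to $1$; where $I_{ij}=0$, the containment $J_l\leq I$ forces every $(J_l)_{ij}=0$, so the maximum is again $0$.

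I do not expect a genuine obstacle here. The only things to get right are the bookkeeping of the correspondence between the index $l$ and the pair consisting of a column of $A$ and a row of $B$, together with the observation that any matrix appearing in a supremum that equals $I$ must itself lie below $I$. Making that outer-product identity explicit is the entire substance of the statement.
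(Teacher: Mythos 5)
Your proposal is correct and matches the paper's own treatment: the paper states this as an observation and, in the paragraph immediately following it, gives exactly your construction ($J_l = A_{\_l}\circ B_{l\_}$ in one direction, assembling $A$ and $B$ from the columns $C_l$ and rows $D_l$ in the other), with the (b) $\Leftrightarrow$ (c) step left as the same easy entrywise check you spell out.
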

%\begin{proof}
%The proof is easy and is based on the fact that given $A$ and $B$,
%one may construct $J_l$ as $A_{\_l}\circ B_{l\_}$, and that
%given the $J_l$s, which are by definition of the form $J_l=C_l\circ D_l$, 
%one obtains $A$ and $B$ by taking $C_l$ and $D_l$ for the $l$th column
%and $l$th row of $A$ and $B$, respectively.
%\end{proof}
In particular, if $A$ and $B$ are the matrices from Observation \ref{thm:rec} (a) 
then one may put $J_l=A_{\_l}\circ B_{l\_}$ ($l=1,\dots,k$), i.e. $J_l$ is the
product of the $l$th column of $A$ and the $l$th row of $B$, to obtain the rectangles
in (b) and (c).
Conversely, if $J_1=C_1\circ D_1,\dots,J_k=C_k\circ D_k$, for some column and row vectors 
$C_l\in\{0,1\}^{n\times 1}$ and $D_l\in\{0,1\}^{1\times m}$, are the rectangles in (b) or (c) then 
the matrices $A$ and $B$ in which the $l$th column and $l$th row are $C_l$ and $D_l$, respectively,
satisfy (a).
Hence,
%Observation \ref{thm:rec} implies that
 if $A$ and $B$ form the output of any BMF method
%R4: corresponding to the -> for an
for an input
matrix $I$, one
%R4 reformulace
may identify the factors $l=1,\dots,k$
with  pairs consisting of the column $A_{\_l}$ and row $B_{l\_}$
or, equivalently, with  rectangles $A_{\_l}\circ B_{l\_}$.
Furthermore, the objective to compute $A$ and $B$ with no/small error $E(I,A\circ B)$ may 
be rephrased as the goal to compute from $I$ a set of rectangles that exactly/approximately
cover~$I$.

%\begin{remark}\label{rem:decomp-cover}
%Geometrically, a $\max$-superposition of $J_1,\dots,J_k$ may be viewed as a covering.
%Each rectangle $J_l$ covers a set of entries, namely the set $\{\tu{i,j}\mid (J_l)_{ij}=1\}$.
%A $\max$-superposition of $J_1,\dots,J_k$ is the Boolean matrix in which the entry $\tu{i,j}$ contains $1$
%if and only if $\tu{i,j}$ is covered by at least one of the rectangles $J_1,\dots,J_k$.
%The objective to compute $A$ and $B$ with no/small error $E(I,A\circ B)$ may thus
%be rephrased as the goal to compute from $I$ a set of rectangles that exactly/approximately
%cover $I$.
%\end{remark}

Clearly, $E$ as defined by (\ref{eqn:error}) may be seen as being a sum of two components,
$E_u$ corresponding to $1$s in $I$ that are 0s in $A\circ B$ (``uncovered'')
and $E_o$ corresponding to $0$s in $I$ that are $1$s in 
$A\circ B$ (``overcovered''):
%\begin{equation}\label{eqn:error}
%  E(I,A\circ B)=\textstyle
%  E_u(I,A\circ B) + E_o(I,A\circ B),  \quad\text{where}
%\end{equation}
\begin{eqnarray*}%\label{eqn:error}
  &&E(I,A\circ B)=\textstyle
  E_u(I,A\circ B) + E_o(I,A\circ B),  \ \text{where}\\
  \nonumber
  &&\qquad E_u(I,A\circ B)=\textstyle
  |\{\tu{i,j} \,;\, I_{ij}=1, (A\circ B)_{ij}=0 \}|,
\\
  \nonumber
  &&\qquad   E_o(I,A\circ B)=\textstyle
  |\{\tu{i,j} \,;\, I_{ij}=0, (A\circ B)_{ij}=1 \}|.
\end{eqnarray*}
%
%\paragraph{From-Below Factorizations}
  Even though $E_u$ and $E_o$ look symmetric, they have
  a highly non-symmetric role in BMF.
Note that these two components are implicitly used in \Asso\ algorithm
\citep{MiMiGiDaMa:TDBP} and are treated non-symmetrically by function
\textit{cover} using two different weights.
The non-symmetry is seen from the following
observation which says that as we add new factors to already established ones
(i.e., add columns and rows to $A$ and $B$, respectively), 
$E_u$ may only decrease while $E_o$ may only increase. 
This property is easy to see using Observation \ref{thm:rec}.

\begin{observation}\label{thm:E}
%  Let $I\in\{0,1\}^{n\times m}$, $A\in\{0,1\}^{n\times k}$, and $B\in\{0,1\}^{k\times m}$.
  Let $A'\in\{0,1\}^{n\times (k+1)}$ and $B'\in\{0,1\}^{(k+1)\times m}$ result 
  by adding to $A$ and $B$ a single column and row, respectively.
  Then

  %\[
\centerline{
    $E_u(I,A'\circ B')\leq E_u(I,A\circ B) $
    and 
    %  \mbox{ and }
     $E_o(I,A'\circ B')\geq E_o(I,A\circ B) $.}
  %\]
\end{observation}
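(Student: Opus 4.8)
The plan is to reduce everything to a single monotonicity fact: passing from $A\circ B$ to $A'\circ B'$ can only turn $0$s into $1$s, never the reverse. Once this is established, both inequalities fall out immediately from the definitions of $E_u$ and $E_o$ as cardinalities of sets of matrix positions.

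First I would make the bookkeeping of the added factor explicit. Write $J_l = A_{\_l}\circ B_{l\_}$ for $l=1,\dots,k$, and let $J_{k+1} = A'_{\_(k+1)}\circ B'_{(k+1)\_}$ be the rectangle corresponding to the newly appended column and row. By the construction recalled in the remark following Observation \ref{thm:rec}, we have $A\circ B = J_1\vee\cdots\vee J_k$ and $A'\circ B' = J_1\vee\cdots\vee J_k\vee J_{k+1}$. Since the Boolean sum $\vee$ is the entrywise maximum, this yields $(A'\circ B')_{ij} = \max\big((A\circ B)_{ij},(J_{k+1})_{ij}\big)$, whence
\[
A\circ B \le A'\circ B'
\]
in the containment order of (\ref{eqn:cont}); equivalently, $(A\circ B)_{ij}=1$ forces $(A'\circ B')_{ij}=1$, and dually $(A'\circ B')_{ij}=0$ forces $(A\circ B)_{ij}=0$.

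Next I would read off the two inequalities from this implication. For $E_u$, the uncovered positions of $A'\circ B'$ are the $\tu{i,j}$ with $I_{ij}=1$ and $(A'\circ B')_{ij}=0$; each such position also satisfies $(A\circ B)_{ij}=0$ and is therefore uncovered by $A\circ B$. Hence the set counted by $E_u(I,A'\circ B')$ is contained in the set counted by $E_u(I,A\circ B)$, giving $E_u(I,A'\circ B')\le E_u(I,A\circ B)$. Symmetrically, the overcovered positions of $A\circ B$ — those with $I_{ij}=0$ and $(A\circ B)_{ij}=1$ — are all overcovered by $A'\circ B'$ as well, since $(A\circ B)_{ij}=1$ implies $(A'\circ B')_{ij}=1$; so the set counted by $E_o(I,A\circ B)$ is contained in the one counted by $E_o(I,A'\circ B')$, yielding $E_o(I,A'\circ B')\ge E_o(I,A\circ B)$.

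I do not expect a real obstacle here: the only point requiring care is the translation between the column/row presentation of $A',B'$ and the join-of-rectangles presentation, which is exactly what Observation \ref{thm:rec} (with its following remark) supplies. The monotonicity of the Boolean product under appending a factor then does all the work, and the two claims are merely the two directions of a set inclusion applied to the $1$-positions of $I$ and of its complement, respectively.
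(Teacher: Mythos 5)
Your argument is correct and is precisely the elaboration the paper intends: the text only remarks that the claim "is easy to see using Observation \ref{thm:rec}," and your reduction to $A\circ B\le A'\circ B'$ via the join-of-rectangles presentation, followed by the two set inclusions, is exactly that route spelled out.
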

%\begin{proof}
%\end{proof}

The importance of Observation \ref{thm:E} derives from the following consideration.
Due to the provable hardness of the BMF related problems, such as DBP or AFP, 
it seems reasonable to assume that conceivable algorithms follow the logic of Observation~\ref{thm:E}
in that they output one factor after another.
%RR: (clearly, one may consider improvements
%such as backtracking but this does not eliminate the substance of this argument). 
This is indeed the case of the main existing algorithms  %\citep{BeVy:Dof,GeGoMi:Td,MiMiGiDaMa:TDBP}
discussed below.
With such algorithms, Observation \ref{thm:E} provides a warning. Namely, we should be 
careful with %M2: preklep committing
committing  $E_o$ error because $E_o$ 
never  decreases by adding further factors.

The most extreme strategy is not to commit $E_o$ error at all,
i.e. add the %M: preklep constraint
constraint $E_o(I,A\circ B)=0$.
As the requirement $E_o(I,A\circ B)=0$ is equivalent to $A\circ B\leq I$,
we call a BMF algorithm producing results with zero $E_o$ 
a \textit{from-below factorization algorithm} and say that $A$ and $B$
provide a \textit{from-below approximation} of $I$.
The restriction to from-below factorizations means that we exploit
only a restricted class of factorizations.
Surprisingly however, we show that such restriction leads to very good BMF
algorithms which outperform the available algorithms
producing the general factorizations, i.e. algorithms %M: preklep committing
committing $E_o$ error.
%, such as \Asso \citep{MiMiGiDaMa:TDBP} which seems to be the currently best of such algorithms. 
%
A further advantageous feature of the from-below approximations 
is the fact that they are amenable to theoretical analysis in terms of  closure and 
order-theoretic structures, as demonstrated below.

%\paragraph{Associated Closure Structures and Optimal From-Below Factorizations}
To every Boolean matrix $I\in \{0,1\}^{n\times m}$, one my associate a pair 
$\tu{{}^{\upts_I}, {}^{\downts_I}}$ (denoted also $\tu{{}^\upts, {}^\downts}$) of
operators assigning to  sets 
%R4: pridany mezery \quad
\[\text{$C\subseteq X=\{1,\dots,n\}$ \quad and \quad $D\subseteq  Y=\{1,\dots,m\}$}
\]
the sets
%RR:
%$C^\upts\subseteq Y$ and
%$D^\downts\subseteq X$ defined by
%\begin{eqnarray}
%  C^\upts = \{ j\in Y \mid \mbox{for each } i\in C: I_{ij}=1\},\\
%  D^\downts = \{ i\in X \mid \mbox{for each } j\in D: I_{ij}=1\}.
%\end{eqnarray}
\begin{eqnarray*}
  C^{\upts_I} = \{ j\in Y \mid \forall i\in C: I_{ij}=1\} \text{ and }
 D^{\downts_I} = \{ i\in X \mid \forall j\in D: I_{ij}=1\}.
\end{eqnarray*}
That is, $C^\upts$ is the set of all attributes (columns) shared by all
objects (rows) in $C$ and $D^\downts$ is the set of all objects
sharing all attributes in $D$.
The set
\[
    \mathcal{B}(I) = \{ \tu{C,D} \mid C\subseteq X, D\subseteq Y,
     C^\upts=D, D^\downts=C\}
\]
is called the \emph{concept lattice} of $I$, i.e. it is the set of all 
$\tu{{}^\upts, {}^\downts}$-closed pairs
$\tu{C,D}$,  called the \emph{formal concepts} of $I$, with $C$ and $D$  
called the extent and the intent. 
The set $\mathcal{B}(I)$ equipped with the partial order $\leq$
(modeling the subconcept-superconcept hierarchy) defined by
%\[
%    \tu{C_1,D_1}\leq \tu{C_2,D_2} \ \mbox{ if{}f } \ 
%     C_1\subseteq C_2 \ (\mbox{if{}f } D_1\supseteq D_2)
%\]
    $\tu{C_1,D_1}\leq \tu{C_2,D_2}$ if{}f 
     $C_1\subseteq C_2$ if{}f  $D_1\supseteq D_2$
forms indeed a complete lattice.
Concept lattices are utilized in 
formal concept analysis (FCA); we refer to \citep{DaPr:ILO,GaWi:FCA} for
details.
The pair $\tu{{}^\upts, {}^\downts}$ forms a Galois connection
between $X$ and $Y$ 
and
the compound mappings $^{\upts\downts}$ and ${}^{\downts\upts}$
form closure operators in $X$ and $Y$, respectively \citep{GaWi:FCA}.
%Recall that being a closure operator in $X$ means that ${}^{\upts\downts}$ fulfills
%\begin{equation}
%  A\subseteq A^{\upts\downts}, \quad
%  A_1\subseteq A_2 \text{ implies } A_1^{\upts\downts}\subseteq A_2^{\upts\downts}, \quad
%  A^{\upts\downts}=A^{\upts\downts\upts\downts}
%\end{equation}
%for every $A,A_1,A_2\subseteq X$.
%RR:, respectively 
 A concept lattice may be visualized using 
a particularly labeled line diagram and carries useful
information about the data $I$ which we utilize in our paper.
Note also that several polynomial-time delay algorithms are available
for computing $\mathcal{B}(I)$ \citep{KuOb:Cpagcl}.

An important link between BMF and formal concepts consists in the following facts.
First, in view of Observation \ref{thm:rec}, rectangles contained in $I$ are the building blocks 
of decompositions of $I$. Clearly, most efficient are the rectangles that are maximal 
w.r.t. containment $\leq$ defined by (\ref{eqn:cont}). As is well known, maximal rectangles
contained in $I$ correspond to formal concepts in $\mathcal{B}(I)$ in that 
$J$ is a maximal rectangle in $I$ if and only if there exists a formal concept
$\tu{C,D}\in\mathcal{B}(I)$ such that $J_{ij}=1$ is equivalent to $i\in C$ and $j\in D$.
This link is utilized in \citep{BeVy:Dof}, in particular in two BMF algorithms 
which we use in our experimental comparison below.
Next, we  generalize a theorem from \citep{BeVy:Dof}  regarding 
exact decompositions to from-below approximations.
Given a set
${\cal F} = \{\tu{C_1,D_1},\dots,\tu{C_k,D_k}\} \subseteq {\cal B}(I)$
(with a fixed indexing of the formal concepts $\tu{C_l,D_l}$),
%of formal concepts of $I$ induces 
define the $n\times k$ and $k\times m$ 
Boolean matrices $A_{\cal F}$ and $B_{\cal F}$ by
\begin{eqnarray}\label{eqn:A}
   (A_{\cal F})_{il}=\left\{
   \begin{array}{cc}
     1 & \mbox{ if } i\in C_l,\\
     0 & \mbox{ if } i\not\in C_l,
   \end{array}
   \right.
%\end{eqnarray*}
%and
%\begin{eqnarray*}\label{eqn:B}
  %\quad\mbox{and}\quad
  \quad\text{and}\quad
   (B_{\cal F})_{lj}=\left\{
   \begin{array}{cc}
     1 & \mbox{ if } j\in D_l,\\
     0 & \mbox{ if } j\not\in D_l,
   \end{array}
   \right.
\end{eqnarray}
for $l=1,\dots,k$.
That is, the $l$th column and $l$th row of $A$ and $B$ are the characteristic vectors
of $C_l$ and $D_l$, respectively.

\begin{remark}\label{rem:MTP}
  The preceding paragraph and Observation \ref{thm:rec}
  make it easy to see that the Minimum Tiling Problem (MTP) considered in 
  \citep{GeGoMi:Td} is equivalent to the 
  problem of finding an exact decomposition of a Boolean matrix.
  Namely, a database of $n$ objects and $m$ items considered in 
  \citep{GeGoMi:Td} may be identified with a Boolean matrix $I\in\{0,1\}^{n\times m}$; 
    a tile in $I$ is
   a pair $\tu{C,D}$ where $C\subseteq\{1,\dots,n\}$ and $D\subseteq\{1,\dots,m\}$
   such that every object in $C$ has every item in $D$.
   Hence, tiles in $I$ may be identified with rectangles contained in $I$. Moreover, maximal 
   tiles in $I$ are just formal concepts of $I$.
   MTP consists in finding a smallest set of tiles that cover the whole database.
   It is now clear that every set $\mathcal{F}$ of tiles of $I$ may be identified with matrices 
   $A_{\cal F}$ and $B_{\cal F}$ as in (\ref{eqn:A}), that $A_{\cal F}\circ B_{\cal F}\leq I$
   (i.e. $\mathcal{F}$ provides a from-below approximation of $I$),
   and that $\mathcal{F}$ is a solution to MTP if{}f $A_{\cal F}$ and $B_{\cal F}$ present
  a solution to the AFP problem from Section \ref{sec:nbn} for $\varepsilon=0$.
   \cite{GeGoMi:Td} proposed an algorithm for the MTP which we examine below.
   Note also that the connection of tiling to BMF is not mentioned in \citep{GeGoMi:Td}.
\end{remark}

%R4: binary -> Boolean
\begin{theorem}
  \label{thm:fbof}
  Let $A\circ B\leq I$ for $n\times k$ and $k\times m$ Boolean matrices $A$
  and $B$. Then there exists a set ${\cal F}\subseteq {\cal B}(I)$ 
  of formal concepts of $I$ with
  \(
    |{\cal F}|\leq k
  \)
  such that for the $n\times |{\cal F}|$ and $|{\cal F}|\times m$ 
  Boolean matrices $A_{\cal F}$ and $B_{\cal F}$ we have
  \[
     A_{\cal F}\circ B_{\cal F}\leq I \mbox{ and }  
      E(I,A_{\cal F}\circ B_{\cal F}) \leq E(I,A\circ B).     
  \]
\end{theorem}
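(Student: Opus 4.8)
The plan is to convert each of the $k$ rectangles that constitute $A\circ B$ into a formal concept of $I$ by taking its Galois closure, collect these concepts into a set $\mathcal F$, and then verify that the resulting from-below product covers at least as many $1$s of $I$ as $A\circ B$ does while committing no overcoverage. The whole argument is essentially order-theoretic bookkeeping with the containment relation $\leq$ on matrices.

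First I would decompose the from-below product into rectangles. Let $C_l\subseteq X$ be the set of rows $i$ with $A_{il}=1$ and $D_l\subseteq Y$ the set of columns $j$ with $B_{lj}=1$, so that $J_l=A_{\_l}\circ B_{l\_}$ is the rectangle covering exactly the pairs $\tu{i,j}$ with $i\in C_l$ and $j\in D_l$. Since $J_l\leq A\circ B\leq I$, each such rectangle is contained in $I$; that is, $I_{ij}=1$ whenever $i\in C_l$ and $j\in D_l$. In the language of the Galois connection this says precisely that $D_l\subseteq C_l^{\upts}$, since every $j\in D_l$ is shared by all rows of $C_l$.

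Next I would close each rectangle up to a concept. Put $\tu{C_l',D_l'}=\tu{C_l^{\upts\downts},C_l^{\upts}}$, which lies in $\mathcal B(I)$ because $^{\upts\downts}$ and $^{\downts\upts}$ are the associated closure operators. By extensivity of the closure we have $C_l\subseteq C_l'$, and by the inclusion $D_l\subseteq C_l^{\upts}=D_l'$ established above we have $D_l\subseteq D_l'$; hence the original rectangle sits inside the new concept, i.e. $J_l\leq J_l'$ where $J_l'$ is the rectangle determined by $\tu{C_l',D_l'}$. Setting $\mathcal F=\{\tu{C_l',D_l'}\mid l=1,\dots,k\}$ gives $|\mathcal F|\leq k$, with strict inequality whenever two closures coincide or a rectangle $J_l$ was empty (in the latter case we simply omit it).

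Finally I would sandwich the new product between $A\circ B$ and $I$. Each $\tu{C_l',D_l'}$ is a formal concept, hence a maximal rectangle contained in $I$, so $J_l'\leq I$; since $A_{\mathcal F}\circ B_{\mathcal F}=\bigvee_l J_l'$ (the identity recorded after Observation~\ref{thm:rec}) and a join of matrices each $\leq I$ is again $\leq I$, we obtain $A_{\mathcal F}\circ B_{\mathcal F}\leq I$, which is the asserted from-below property. On the other side, $J_l\leq J_l'$ for all $l$ gives $A\circ B=\bigvee_l J_l\leq\bigvee_l J_l'=A_{\mathcal F}\circ B_{\mathcal F}$, so altogether $A\circ B\leq A_{\mathcal F}\circ B_{\mathcal F}\leq I$. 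Because both products are from-below, their overcoverage $E_o$ vanishes and their total error equals the uncovered part $E_u$; the chain of inequalities shows every $1$ of $I$ covered by $A\circ B$ is also covered by $A_{\mathcal F}\circ B_{\mathcal F}$, so $E_u$ can only shrink and $E(I,A_{\mathcal F}\circ B_{\mathcal F})\leq E(I,A\circ B)$. The only genuinely delicate point is the closure step: one must be certain that enlarging a rectangle to its generated concept never pushes an entry outside $I$. This is exactly what the correspondence between formal concepts and maximal rectangles contained in $I$ (together with extensivity of the Galois closure) guarantees; once it is in place, the remaining inequalities are immediate.
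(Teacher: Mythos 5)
Your proposal is correct and follows essentially the same route as the paper's own proof: you form the same concepts $\tu{C_l^{\upts\downts},C_l^{\upts}}$ by closing the rectangles $J_l=A_{\_l}\circ B_{l\_}$, and derive the same sandwich $A\circ B\leq A_{\mathcal F}\circ B_{\mathcal F}\leq I$ from which the error inequality follows. The only difference is presentational --- you spell out the reduction of $E$ to $E_u$ for from-below products, which the paper leaves implicit.
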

\begin{proof}
The proof follows the same logic as the one of \citep[Theorem 2]{BeVy:Dof} and we include it
for reader's convenience.
An informal argument: each of the $k$ rectangles corresponding to $A\circ B$ is a rectangle in $I$ and is contained
in a maximal rectangle (formal concept) in $I$. The set $\mathcal{F}$ of these formal concepts has at most
$k$ elements and covers at least as many entries of $I$ as those covered by $A\circ B$.
Formally, every 
rectangle 
$J_l=A_{\_l}\circ B_{l\_}$ is contained in $I$. 
According to Observation \ref{thm:rec}, 
$I=\max_{l=1}^k J_l$.
Now consider the sets $C_l=\{ i \mid A_{il}=1\}$ and $D_l=\{ j \mid B_{lj}=1 \}$.
Every  $\tu{C_l^{\up\down},C_l^\up}$ is a formal concept in $\mathcal{B}(I)$
(a well-known fact in FCA).
Moreover $C_l\subseteq C_l^{\up\down}$, since ${}^{\up\down}$ is a closure operator. 
As
 $I_{il}=1$ for every $i\in C_l$ and $j\in D_l$, it follows that
$D_l\subseteq C_l^\up$.
Now consider the set 
\[
   \mathcal{F}=\{\tu{C_1^{\up\down},C_1^\up},\dots,\tu{C_k^{\up\down},C_k^\up}  \}
   \subseteq \mathcal{B}(I)
\]
and the matrices  $A_{\cal F}$ and $B_{\cal F}$.
Clearly $\mathcal{F}$ contains at most $k$ elements (it may happen
$|\mathcal{F}|<k$).
It is easy to check that the rectangle corresponding to $\tu{C_l^{\up\down},C_l^\up}$,
i.e. the cross-product $(A_{\cal F})_{\_l}\circ (B_{\cal F})_{l\_}$, is contained in $I$ and,
due to the above observation, contains $J_l$. Hence,
\[
     A\circ B =\max_{l=1}^k J_l \leq \max_{l=1}^k (A_{\cal F})_{\_l}\circ (B_{\cal F})_{l\_}
  = A_{\cal F}\circ B_{\cal F} \leq I.
\]
It follows that $E(I,A_{\cal F}\circ B_{\cal F}) \leq E(I,A\circ B)$,
finishing the proof.
%\hfill$\qed$
\end{proof}

%In Section \ref{sec:eI}, we provide further theoretical insight into the 
%from-below approximations of $I$, derive new heuristics that are experimentally tested and utilized
%in a design of a new BMF algorithm in the subsequent sections.
%In Section \ref{sec:red} we provide an independent obrservation,
%namely a simple technique to reduce
%time for computing decompositions of $I$ by reducing $I$ to a possibly smaller
%matrix whose exact decompositions are the same as the exact decompositions
%of $I$.

Theorem \ref{thm:fbof} asserts that decompositions utilizing formal concepts as factors 
% of the form $A_{\cal F}\circ B_{\cal F}$
are the best as far as the from-below approximations are concerned.
%R4: veta vynechana
%That is to say, formal concepts (equivalently, maximal tiles)  are not just some particular kinds 
%of factors. Rather, they are the most efficient factors to which one naturally arrives
%through the geometric view of BMF as the problem of covering by rectangles.
Moreover, formal concepts are easy to interpret, which is a relevant aspect from a data analysis
viewpoint.

%-------------------------------------------------------------
\subsection{Intervals in $\mathcal{B}(I)$, Role of Entries in $I$, and the Essential Part of $I$} \label{sec:eI}
% All entries are equal but some are better: good and bad elements to cover/not to cover}
%\paragraph{The Role of Entries of $I$}

%R4: zmenen zacatek vety
As we show in this section,
 formal concepts and other closure structures associated to the Boolean matrix 
$I\in\{0,1\}^{n\times m}$, such as the concept lattice $\mathcal{B}(I)$, 
help us understand the geometry of BMF. 
%R4: vynechano: This claim is illustrated in the present section. 
In particular, we show that the concept lattice $\mathcal{B}(I)$ may help us differentiate the role
of entries of $I$ in decompositions---an issue not addressed in the existing literature---and that intervals 
in $\mathcal{B}(I)$ play a crucial role in this regard.

%
%Clearly, the basic distinction is between the entries $\tu{i,j}$ containing $0$ and those containing $1$,
%i.e. $I_{ij}=0$ and $I_{ij}=1$, since the entries with $1$ are those that need to be covered to obtain
%an (exact) decomposition of $I$. 
%A further differentiation of the role of entries with $1$, an issue not discussed in the previous papers on BMF,
%is examined below.
%
%The entries $\tu{i,j}$ containing $1$ are essential for decompositions in that, as follows from the previous section, 
%for a set $\mathcal{F}\subseteq\mathcal{B}(I)$, we have 
%$I=A_\mathcal{F}\circ B_\mathcal{F}$ if{}f every 

%Intervals in $\mathcal{B}(I)$ are the subsets of $\mathcal{B}(I)$ of the form
  For formal concepts $\tu{C_1,D_1},\tu{C_2,D_2}\in\mathcal{B}(I)$, consider the subset of $\mathcal{B}(I)$ of the form
\begin{equation}\label{eqn:interval}
      [\tu{C_1,D_1},\tu{C_2,D_2}] =  \{ \tu{E,F}\in \mathcal{B}(I) \mid \tu{C_1,D_1} \leq\tu{E,F} \leq\tu{C_2,D_2} \}.
\end{equation}
Such a subset is called the \emph{interval in $\mathcal{B}(I)$ bounded by}  $\tu{C_1,D_1}$ and $\tu{C_2,D_2}$.
%R4: vynechano: (from below and above, respectively).
Furthermore, for $C\subseteq X$ and $D\subseteq Y$, let
$\gamma(C)=\tu{C^{\upts\downts},C^\upts}$  and 
           $\mu(D)= \tu{D^{\downts},D^{\downts\upts}}$, i.e. 
$\gamma(C)$ and $\mu(D)$ are the least formal concept in $\mathcal{B}(I)$ whose extent includes $C$ and the greatest
one whose intent includes $D$. Let us use $\gamma(i)$ and $\mu(j)$ instead of 
$\gamma(\{i\})$ and $\mu(\{j\})$ for row $i\in X$ and column $j\in Y$.
Denote
\begin{equation}\label{eqn:ICD}
      \mathcal{I}_{C,D} = [\gamma(C), \mu(D)].
\end{equation}
Clearly, every interval in $\mathcal{B}(I)$ is of the form (\ref{eqn:ICD}).
Of particular importance are the intervals of the form
\begin{equation}\nonumber
   \mathcal{I}_{ij} = [\gamma(i),\mu(j)].
\end{equation}
%for $i\in X$ and $j\in Y$.
%
%We now examine the role of the entries of $I$ containing $1$ in BMF,
%an issue not addressed by the existing algorithms.
%It turns out that a crucial information is carried by 
%certain intervals in the concept lattice $\mathcal{B}(I)$ of $I$.
%Namely, for an entry $\tu{i,j}$,  consider the formal concepts 
% $\gamma(i)$ and $\mu(j)$ induced by object $i$
%and attribute $j$, defined by
%\begin{eqnarray*}
%   \gamma(i) = \tu{ \{i\}^{\upts\downts}, \{i\}^{\upts} } \quad\mbox{ and }\quad
%   \mu(j) = \tu{ \{j\}^{\downts}, \{j\}^{\downts\upts} },
%\end{eqnarray*}
%and the interval
%\begin{equation}
%   \mathcal{I}_{ij} = [\gamma(i),\mu(j)]
%\end{equation}
%in $\mathcal{B}(I)$.
%Before we examine the role of intervals $\mathcal{I}_{ij}$, we introduce somewhat
%more general intervals and establish their basic properties utilized in 
%Section~\ref{sec:a}. For $C\subseteq X$ and $D\subseteq Y$, denote by 
%$\mathcal{I}_{C,D}$ the interval 
%\[
%      \mathcal{I}_{C,D} = [\gamma(C), \mu(D)] 
%\]
%in $\mathcal{B}(I)$
%where  $\gamma(C)=\tu{C^{\upts\downts},C^\upts}$ and 
%           $\mu(D)= \tu{D^{\downts},D^{\downts\upts}}$,
%i.e. the set 
%\begin{equation}\nonumber
%   [ \gamma(C), \mu(D)] = 
%       \{ \tu{E,F}\in \mathcal{B}(I) \mid \gamma(C) \leq\tu{E,F} \leq\mu(D) \}
%\end{equation}
%of all 
%%RR: formal 
%concepts bounded by the concepts $\gamma(C)$ and $\mu(D)$
% induced by $C$ and $D$, respectively.
%Clearly, $\mathcal{I}_{ij}$ is a special case of $\mathcal{I}_{C,D}$ for $C=\{i\}$ and $D=\{j\}$.
%
The following lemma describes the crucial properties for understanding
the role of intervals in from-below decompositions and is utilized
later in this section as well as in proving correctness of our new algorithm.

\begin{lemma} \label{thm:int}
 \begin{itemize}
   \item[\emph{(a)}]  $\mathcal{I}_{C,D}$ is non-empty if and only if
   $C\times D\subseteq I$, i.e. if $I_{ij}=1$ for every $i\in C$ and $j\in D$.
%   $I$ contains the rectangle given by $\tu{C,D}$, i.e. 
%   if $I_{ij}=1$ for every $i\in C$ and $j\in D$.
%   $C\times D\subseteq I$.
    In particular,  $\mathcal{I}_{ij}$ is non-empty if and only if $I_{ij}=1$.
   \item[\emph{(b)}] $\mathcal{I}_{C,D}\!=\!
      \{\tu{E,F}\in\mathcal{B}(I) \,|\, C\subseteq E, D\subseteq F\}\!=\!
      \{\tu{E,F}\in\mathcal{B}(I) \,|\, C^{\upts\downts}\subseteq E, D^{\downts\upts}\subseteq F\}$. 
     In particular, $\mathcal{I}_{ij}$ is the set of all concepts  that cover~$\tu{i,j}$.
   \item[\emph{(c)}] If $(A_{\cal F}\circ B_{\cal F})_{ij}=1$ %for  $\mathcal{F}\subseteq\mathcal{B}(I)$
                  then $\mathcal{F}$ contains at least one concept in $\mathcal{I}_{ij}$.
 \end{itemize}
\end{lemma}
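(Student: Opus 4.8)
The plan is to reduce every claim to elementary properties of the Galois connection $\tu{{}^\up,{}^\down}$ together with the order on $\mathcal{B}(I)$. The facts I would invoke throughout are: extensivity $C\subseteq C^{\up\down}$ and $D\subseteq D^{\down\up}$; that extents and intents are closed, i.e. $E^{\up\down}=E$ and $F^{\down\up}=F$ whenever $\tu{E,F}\in\mathcal{B}(I)$; the mixed identity $D^{\down\up\down}=D^\down$ (so that $D^\down$ is itself an extent, being the extent of $\mu(D)$); and the order description $\tu{E_1,F_1}\le\tu{E_2,F_2}$ iff $E_1\subseteq E_2$ iff $F_1\supseteq F_2$. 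I will also repeatedly use the basic translation, immediate from the definitions of the two operators, that $C\times D\subseteq I$ is equivalent to $D\subseteq C^\up$ and to $C\subseteq D^\down$.

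For part (a), since $\mathcal{I}_{C,D}=[\gamma(C),\mu(D)]$, non-emptiness of the interval is equivalent to $\gamma(C)\le\mu(D)$, which, read on extents, says $C^{\up\down}\subseteq D^\down$. I would then show this is equivalent to $C\times D\subseteq I$, i.e. to $C\subseteq D^\down$, in both directions: from $C^{\up\down}\subseteq D^\down$, extensivity gives $C\subseteq C^{\up\down}\subseteq D^\down$; conversely, from $C\subseteq D^\down$ I apply the monotone operator ${}^{\up\down}$ and use that $D^\down$ is an extent to get $C^{\up\down}\subseteq (D^\down)^{\up\down}=D^\down$. The special case follows by taking $C=\{i\}$ and $D=\{j\}$, where $C\times D\subseteq I$ reads exactly $I_{ij}=1$.

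For part (b), a concept $\tu{E,F}$ lies in $[\gamma(C),\mu(D)]$ iff $\gamma(C)\le\tu{E,F}$ and $\tu{E,F}\le\mu(D)$. Reading the first inequality on extents yields $C^{\up\down}\subseteq E$ and the second on intents yields $D^{\down\up}\subseteq F$, which is precisely the third displayed set. To pass to the middle set I would prove that for an extent $E$ one has $C^{\up\down}\subseteq E$ iff $C\subseteq E$: one direction is extensivity, and the other applies ${}^{\up\down}$ to $C\subseteq E$ and uses $E^{\up\down}=E$; the intent side is dual, using $F^{\down\up}=F$. Specializing to $C=\{i\}$, $D=\{j\}$ then identifies $\mathcal{I}_{ij}$ with the concepts $\tu{E,F}$ satisfying $i\in E$ and $j\in F$, i.e. exactly those covering $\tu{i,j}$.

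Finally, part (c) is a direct unfolding of the Boolean product: $(A_{\cal F}\circ B_{\cal F})_{ij}=1$ means some index $l$ has $(A_{\cal F})_{il}=(B_{\cal F})_{lj}=1$, that is $i\in C_l$ and $j\in D_l$ for the concept $\tu{C_l,D_l}\in\mathcal{F}$; by the special case of (b) this concept lies in $\mathcal{I}_{ij}$. I expect no real obstacle in (c). The only point demanding care is the order duality in (a) and (b)---keeping straight that comparison of concepts translates to inclusion of extents but reverse inclusion of intents---together with the small but essential observation that $D^\down$ is already $\up\down$-closed, which is what lets the closure operator be discharged cleanly in the backward direction of (a).
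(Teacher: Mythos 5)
Your proposal is correct and follows essentially the same route as the paper: reduce (a) to $\gamma(C)\le\mu(D)$ and unwind it via basic Galois-connection identities, reduce (b) to the fact that $C^{\up\down}$ is the least extent containing $C$ (dually for intents), and obtain (c) by unfolding the Boolean product and invoking (b). The only cosmetic difference is that in (a) you stay on the extent side and use $D^{\down\up\down}=D^\down$, whereas the paper passes through the equivalent condition $D\subseteq C^\up$; both are the same one-line adjunction argument.
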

\begin{proof}
 (a) As $\mathcal{I}_{C,D}\not=\emptyset$ if{}f $\gamma(C)\leq \mu(D)$,
   we need to check that $\gamma(C)\leq \mu(D)$ is equivalent to $C\times D\subseteq I$. 
   Using basic properties of Galois connections \citep{GaWi:FCA}, we get
   $\gamma(C)\leq \mu(D)$ if{}f $C^{\upts\downts}\subseteq D^\downts$
   if{}f $D\subseteq C^{\upts}$ if{}f for every $y\in D$ we have 
   $I_{ij}=1$ for every $i\in C$, i.e. if{}f $C\times D\subseteq I$.

  (b) We have $\tu{E,F}\in \mathcal{I}_{C,D}$ if{}f 
       $\gamma(C)\leq \tu{E,F}\leq \mu(D)$ if{}f
      $C^{\upts\downts}\subseteq E$ and $D^{\downts\upts}\subseteq F$.
      Now, since $C^{\upts\downts}$ is the least extent (${}^{\upts\downts}$-closed
      set of objects) containing $C$, $C^{\upts\downts}\subseteq E$ is equivalent
     to $C\subseteq E$; dually, $D^{\downts\upts}\subseteq F$ is equivalent to $D\subseteq F$,
     proving (b).

 (c) Due to Observation \ref{thm:rec} and (\ref{eqn:A}),
  $(A_{\cal F}\circ B_{\cal F})_{ij}=1$ means that there exists $\tu{C,D}\in\mathcal{F}$
   covering $\tu{i,j}$, whence (b) implies $\tu{C,D}\in \mathcal{I}_{ij}$.
%   \hfill$\qed$
\end{proof}

\begin{remark}\label{rem:mark}
Interestingly, our problem may be reformulated as a certain graph-marking problem.
Consider for a given matrix $I$ and $\varepsilon\geq 0$ the line diagram of the concept 
lattice $\mathcal{B}(I)$ \cite{DaPr:ILO,GaWi:FCA}, i.e. 
a labeled Hasse diagram in which the nodes represent formal concepts of $\mathcal{B}(I)$ and the nodes 
representing concepts $\gamma(i)$ and $\mu(j)$ are %MM: preklep labelled -> labeled
labeled by ``$i$'' and ``$j$''
(the diagram is explained in Example \ref{ex:mark}).
Due to Lemma \ref{thm:int} (b) and (c), the problem to find a smallest set $\mathcal{F}$ 
of formal concepts for which $E(I,A_\mathcal{F}\circ B_\mathcal{F})\leq\varepsilon$ 
(hence in case $\varepsilon=0$ the problem to find a decomposition 
$I=A_\mathcal{F}\circ B_\mathcal{F}$ with a smallest possible $\mathcal{F}$) 
may be reformulated as the following graph-marking problem:
In the line diagram of $\mathcal{B}(I)$, mark the smallest number of nodes such that with a possible 
exception of $\varepsilon$ cases, 
every non-empty interval $\mathcal{I}_{ij}$ (object $i$, attribute $j$) contains at least one marked node.
In other words, mark the smallest number of nodes such that with a possible exception of
$\varepsilon$ cases, if there is a path going upward from a node labeled ``$i$'' to a node labeled ``$j$'',
then one such path must contain a marked node.
%R4: geometric bez uvozovek
This geometric perspective is illustrated in Example \ref{ex:mark} and is used in what follows.
\end{remark}

\begin{example}\label{ex:mark}
  As an illustration, consider the following Boolean matrix $I$, representing objects
$1,\dots,6$ (rows) 
and attributes $a,\dots,e$ (columns). 
\begin{center}
\begin{minipage}[t]{0.35\columnwidth}%
\vspace{-5.4cm}
\centerline{
$
  \left( \begin{array}{ccccc}
   {1}  &   {1}   &  0   &  1  &   0\\
   1  &   0   &  0   &  1  &   {1}\\
   0  &   1   &  {1}   &  0  &   0\\
   0  &   0   &  0   &  {1} &   0\\
   {1}  &   1   &  1   &  1  &   0\\
   {1}  &   1   &  0   &  0  &   1\\   
    \end{array} \right)
$}
\end{minipage}%
\begin{minipage}[t]{0.6\columnwidth}%
\centerline{
\includegraphics[scale=1]{example}
}
\end{minipage}
\end{center}
The right part shows the line diagram of the concept lattice $\mathcal{B}(I)$
\citep{GaWi:FCA}. That is, the nodes and lines represent the concepts and the partial
order $\leq$ of $\mathcal{B}(I)$.
Every node represents the formal concept whose extent and intent consist of
the objects and attributes  attached to the node. 
%RRR: \\MARTIN: tohle jsem zmenil, aby to sedelo k prikladu:\\
Thus, the middle node of the three below the top node represents the formal concept
$\tu{\{1,2,4,5\},\{d\}}$, while first node of the three below the top node represents $\tu{\{1,2,5,6\},\{a\}}$.
There is a line from the node representing 
%RRR: obracene
$\tu{\{5\},\{a,b,c,d\}}$ up to the one representing  $\tu{\{1,5\},\{a,b,d\}}$ because
the first node is a direct predecessor of the second one.
In general, $\tu{C_1,D_1}\leq \tu{C_2,D_2}$ if{}f there is a path going up from the node representing
$\tu{C_1,D_1}$ to the one representing $\tu{C_2,D_2}$.
Bold object and attribute names indicate object and attribute concepts.
For instance, $\tu{\{1,5\},\{a,b,e\}}$ is the object concept $\gamma(1)$ because $1$ appears in bold
as a label at the corresponding node. 
Similarly, $\mu(a)=\tu{\{1,2,5,6\},\{a\}}$ is an object concept, while $\tu{\{1,2,5\},\{a,d\}}$ is neither
object nor attribute concept.
Observe what is true in general, namely that the objects (attributes) attached to every node are just those
that appear in bold on some downward (upward) path leading from the node. 
Hence, one can remove all the object and attribute labels except for the bold ones without
any loss of information and obtain the so-called  reduced labeling.

%R4: uprava vety
We can easily see from the diagram that the interval $\mathcal{I}_{4a}$ is empty,
corresponding to $I_{4a}=0$, while
\begin{eqnarray*}
  \mathcal{I}_{1a}&=&\{
 \tu{\{1,5\},\{a,b,d\}},\tu{\{1,2,5\},\{a,d\}}, \tu{\{1,5,6\},\{a,b\}},\\ 
    && \ \  \tu{\{1,2,5,6\},\{a\}}
\},
\end{eqnarray*}
 corresponding to $I_{1a}=1$.
In view of Lemma \ref{thm:int} and Remark \ref{rem:mark}, 
the set
%MARTIN: nevim zda tady byla zamerne neoptimalni faktorizace, tak jsem zase vlozil neoptimalni faktorizaci 
\begin{eqnarray*}
    \mathcal{F} &=&\{
         \tu{\{1,5\},\{a,b,d\}},\tu{\{1,2,4,5\},\{d\}}, \tu{\{2,6\},\{a,e\}},\tu{\{3,5\},\{b,c\}},\\
   && \ \  \tu{\{6\},\{a,b,e\}}
   \}
\end{eqnarray*}
is a set of factor concepts
%R4: of $I$
 of $I$, i.e. $I=A_\mathcal{F}\circ B_\mathcal{F}$, because
if we mark the nodes representing the concepts in $\mathcal{F}$, every non-empty interval
$\mathcal{I}_{ij}$ contains a marked node.
%DROBNA VADA NA KRASE: concepty v F jsou prave objektove koncepty
\end{example}

Clearly, a basic distinction may be made between the entries $\tu{i,j}$ of $I$ containing $0$ and those containing $1$,
i.e. between $I_{ij}=0$ and $I_{ij}=1$. Namely, the entries with $1$ are those that need to be covered by factors to obtain
an exact decomposition of $I$. 
In this sense, the entries of $I$ containing $1$ are sufficient.
Some of  these sufficient entries may, however, still be omitted and yet, the coverage of the remaining
entries still guarantees a decomposition of $I$, resulting in further differentiation
of the entries.
In general, with a prescribed precision $\varepsilon\geq 0$ of decomposition,
the differentiation %of the entries with $1$ according to their role in decompositions 
may be based on the following question. 
Is it possible to identify a matrix $J\leq I$ with a small number of $1$s
 with the property?:
\begin{equation}\label{eqn:ess_cov} 
  \text{for any $\mathcal{F}\subseteq\mathcal{B}(I)$, if 
%$A_{\cal F}\circ B_{\cal F}$ covers all the 1s of $S_\varepsilon$
%then the distance of $A_{\cal F}\circ B_{\cal F}$ and $I$ is at most $\varepsilon$, i.e.
$J\leq A_{\cal F}\circ B_{\cal F}$ then
$E(I,A_{\cal F}\circ B_{\cal F})\leq\varepsilon$
}
\end{equation}
Note that (\ref{eqn:cont}) says that the coverage of all $1$s in $J$ guarantees
the coverage of all $1$s in $I$ with a possible exception of $\varepsilon$ cases.
%R4: uprava
A Boolean matrix $J\leq I$ %MM: preklep satysfying -> satisfyng
satisfying (\ref{eqn:ess_cov}) that is minimal w.r.t.
 $\leq$ (i.e. the partial order defined by (\ref{eqn:cont})) is called \emph{$\varepsilon$-essential}
for $I$.
%In other words, 
%%RR: the entries with 1s of such $J$ may be seen as the entries
%%whose 
%the coverage of $1$s in $J$ by any $\mathcal{F}\subseteq \mathcal{B}(I)$ is sufficient
%for the error of $A_{\cal F}\circ B_{\cal F}$ to be at most $\varepsilon$.
In what follows, we restrict to $\varepsilon=0$ and call $0$-essential matrices simply 
\emph{essential}, or essential parts of $I$. We describe the essential parts 
and utilize them in a new decomposition algorithm. We shall demonstrate that essential
matrices prove useful  in computing exact decompositons as well as from-below approximations
of Boolean matrices.

%We call any matrix $J$ satisfying the above properties an \textit{essential}
%part (for exact decomposition) of $I$.
%RR:
 For $I\in\{0,1\}^{n\times m}$ denote by $\ess(I)$ the $n\times m$ Boolean matrix given by 
\[
    (\ess(I))_{ij} = 1 \mbox{ if{}f } \mathcal{I}_{ij} 
     \mbox{ is non-empty and minimal w.r.t. $\subseteq$,}
\]
where $\subseteq$ denotes set inclusion. Note that
\begin{eqnarray}
    \label{eqn:ord-int}
     \mathcal{I}_{ij}\! \subseteq \mathcal{I}_{i'j'} \mbox{ if{}f }
      \gamma(i')\!\leq\!\gamma(i) \,\mbox{and}\, \mu(j)\!\leq\!\mu(j')
%  \\    \nonumber
     \mbox{ if{}f } \{i\}^\upts\!\subseteq\! \{i'\}^\upts \,\mbox{and}\,
      \{j\}^\downts\!\subseteq\! \{j'\}^\downts
\end{eqnarray}
and that a non-empty $\mathcal{I}_{ij}$ is minimal w.r.t. $\subseteq$ if
it is not contained in any other $\mathcal{I}_{i'j'}$, i.e.
 $\mathcal{I}_{ij}=\mathcal{I}_{i'j'}$ whenever
$\mathcal{I}_{i'j'}\subseteq\mathcal{I}_{ij}$ for every $i',j'$.
The next theorem asserts that $\ess(I)$ is
%R4: the least -> a unique
a unique essential
part of $I$.
The theorem concerns \emph{clarified} matrices 
by which we mean
matrices with no identical rows and columns.
Clarification,
 i.e. removal of duplicit rows and columns,
 is a simple and useful preprocessing because it removes redundant information.
Moreover, it is easy to see that the decompositions of $I$ and its clarified $I'$
are in one-to-one correspondence.
In fact, as is readily seen from the proof, 
the sufficiency of $\ess(I)$ holds for general matrices;
and the assumption of clarification is used to prove uniqueness of $\ess(I)$.

\begin{theorem}
  \label{thm:ess}
   $\ess(I)$ is 
%R4: an -> a unique
   a unique  essential part of $I$, for every clarified $I$.
%  \emph{(a)}
%  $\ess(I)$ is an essential part of $I$.
%
%  \emph{(b)}
%  If $J$ is an essential part of a reduced $I$ then $\ess(I)\leq J$.
%
\end{theorem}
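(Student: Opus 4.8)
The plan is to establish the two halves of the statement separately: first that $\ess(I)$ satisfies the defining condition (\ref{eqn:ess_cov}) with $\varepsilon=0$ (\emph{sufficiency}), and then that every $J\leq I$ satisfying (\ref{eqn:ess_cov}) obeys $\ess(I)\leq J$ (\emph{leastness}). Since for any $\mathcal{F}\subseteq\mathcal{B}(I)$ each concept is a rectangle contained in $I$, we always have $A_{\mathcal{F}}\circ B_{\mathcal{F}}\leq I$ by Observation \ref{thm:rec}; hence $E(I,A_{\mathcal{F}}\circ B_{\mathcal{F}})\leq 0$ is equivalent to $A_{\mathcal{F}}\circ B_{\mathcal{F}}=I$, and (\ref{eqn:ess_cov}) reads: covering all $1$s of $J$ by concept-factors forces an exact decomposition. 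Once leastness is proved, $\ess(I)$ is the least element of the set of matrices satisfying (\ref{eqn:ess_cov}), so it is in particular minimal and, being least, the unique minimal one; this delivers both ``essential'' and ``unique'' simultaneously.

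For sufficiency, I would fix $\tu{i,j}$ with $I_{ij}=1$, so $\mathcal{I}_{ij}\neq\emptyset$ by Lemma \ref{thm:int}(a). As there are finitely many intervals, among the non-empty ones contained in $\mathcal{I}_{ij}$ there is one minimal w.r.t. $\subseteq$, say $\mathcal{I}_{i'j'}$; transitivity of $\subseteq$ shows it is globally minimal, so $(\ess(I))_{i'j'}=1$. Now if $\ess(I)\leq A_{\mathcal{F}}\circ B_{\mathcal{F}}$, then $(A_{\mathcal{F}}\circ B_{\mathcal{F}})_{i'j'}=1$, so by Lemma \ref{thm:int}(c) some $\tu{C,D}\in\mathcal{F}$ lies in $\mathcal{I}_{i'j'}\subseteq\mathcal{I}_{ij}$, and by Lemma \ref{thm:int}(b) this concept covers $\tu{i,j}$, giving $(A_{\mathcal{F}}\circ B_{\mathcal{F}})_{ij}=1$. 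Thus $A_{\mathcal{F}}\circ B_{\mathcal{F}}=I$. This half uses neither clarification nor any minimality of $J$, matching the remark that sufficiency holds for general matrices.

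For leastness, suppose $J\leq I$ satisfies (\ref{eqn:ess_cov}) yet $(\ess(I))_{i_0 j_0}=1$ while $J_{i_0 j_0}=0$; I would derive a contradiction by exhibiting a covering of $J$ that misses $\tu{i_0,j_0}$. Take $\mathcal{F}=\mathcal{B}(I)\setminus\mathcal{I}_{i_0 j_0}$. By Lemma \ref{thm:int}(b) the only concepts covering $\tu{i_0,j_0}$ are those in $\mathcal{I}_{i_0 j_0}$, so $(A_{\mathcal{F}}\circ B_{\mathcal{F}})_{i_0 j_0}=0$ and $A_{\mathcal{F}}\circ B_{\mathcal{F}}\neq I$. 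It remains to check $J\leq A_{\mathcal{F}}\circ B_{\mathcal{F}}$: for any $\tu{i,j}$ with $J_{ij}=1$ we have $\tu{i,j}\neq\tu{i_0,j_0}$, and I claim $\mathcal{I}_{ij}\not\subseteq\mathcal{I}_{i_0 j_0}$. Indeed, since $(\ess(I))_{i_0 j_0}=1$ the interval $\mathcal{I}_{i_0 j_0}$ is minimal non-empty, so $\mathcal{I}_{ij}\subseteq\mathcal{I}_{i_0 j_0}$ would force $\mathcal{I}_{ij}=\mathcal{I}_{i_0 j_0}$; by (\ref{eqn:ord-int}) this means $\{i\}^{\upts}=\{i_0\}^{\upts}$ and $\{j\}^{\downts}=\{j_0\}^{\downts}$, i.e. rows $i,i_0$ and columns $j,j_0$ of $I$ coincide, whence clarification yields $\tu{i,j}=\tu{i_0,j_0}$, a contradiction. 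Thus some concept in $\mathcal{I}_{ij}\setminus\mathcal{I}_{i_0 j_0}\subseteq\mathcal{F}$ covers $\tu{i,j}$, giving $J\leq A_{\mathcal{F}}\circ B_{\mathcal{F}}$. This contradicts (\ref{eqn:ess_cov}) for $J$, so $\ess(I)\leq J$.

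The main obstacle is the leastness step, specifically verifying that the ``punctured'' family $\mathcal{F}=\mathcal{B}(I)\setminus\mathcal{I}_{i_0 j_0}$ still covers every $1$ of $J$. This is exactly where the minimality of $\mathcal{I}_{i_0 j_0}$ (guaranteed by $(\ess(I))_{i_0 j_0}=1$) and the clarification hypothesis enter: together they rule out $\mathcal{I}_{ij}\subseteq\mathcal{I}_{i_0 j_0}$ for $\tu{i,j}\neq\tu{i_0,j_0}$ via (\ref{eqn:ord-int}). Everything else reduces to direct applications of Lemma \ref{thm:int}.
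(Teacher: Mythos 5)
Your proof is correct. The sufficiency half is essentially the paper's argument (pass to a minimal subinterval $\mathcal{I}_{i'j'}\subseteq\mathcal{I}_{ij}$, use that its essential entry is covered, and pull the covering concept back up through $\mathcal{I}_{i'j'}\subseteq\mathcal{I}_{ij}$ via Lemma \ref{thm:int}); you phrase it directly where the paper argues by contradiction, but the content is identical. Where you genuinely diverge is in the leastness step. The paper starts from an \emph{arbitrary} family $\mathcal{G}$ with $J\leq A_{\mathcal{G}}\circ B_{\mathcal{G}}$ and surgically repairs it: every concept of $\mathcal{G}$ lying in $\mathcal{I}_{i_0j_0}$ is removed and replaced, for each $\tu{i',j'}$ with $J_{i'j'}=1$, by some concept in $\mathcal{I}_{i'j'}-\mathcal{I}_{i_0j_0}$, whose existence is argued from minimality plus clarification. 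You instead take the single maximal witness $\mathcal{F}=\mathcal{B}(I)\setminus\mathcal{I}_{i_0j_0}$ and verify directly that it covers every $1$ of $J$ while missing $\tu{i_0,j_0}$; the same minimality-plus-clarification argument (via (\ref{eqn:ord-int})) shows $\mathcal{I}_{ij}\not\subseteq\mathcal{I}_{i_0j_0}$ for each $1$ of $J$, hence $\mathcal{I}_{ij}\cap\mathcal{F}\neq\emptyset$. Your construction is cleaner: it avoids the bookkeeping of modifying $\mathcal{G}$ and makes it transparent that the obstruction is purely the one interval $\mathcal{I}_{i_0j_0}$. The paper's version has the minor virtue of exhibiting a covering close to any given one, but nothing in the theorem requires that. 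Both hinge on exactly the same two ingredients in exactly the same place, so the logical dependence on clarification is identical.
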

\begin{proof}
We need to show  (a) $\ess(I)\leq I$ and that for any ${\cal F}\subseteq {\cal B}(I)$,
  if $\ess(I)\leq A_{\cal F}\circ B_{\cal F}$ then 
  %$E(I,A_{\cal F}\circ B_{\cal F})=0$, i.e.
  $I=A_{\cal F}\circ B_{\cal F}$;
  and (b) if $J$ satisfies (a) then $\ess(I)\leq J$.

  (a)
  $\ess(I)\leq I$ follows from the definition of $\ess(I)$ and Lemma \ref{thm:int} (a).
  Let $\ess(I)\leq A_{\cal F}\circ B_{\cal F}$ and assume by contradiction that there exists
  $\tu{i,j}$ for which $I_{ij}=1$ and $(A_{\cal F}\circ B_{\cal F})_{ij}=0$.
  Consider any minimal interval $\mathcal{I}_{i'j'}\subseteq\mathcal{I}_{ij}$
  (at least one exists). By definition of $\ess(I)$, 
  $(\ess(I))_{i'j'}=1$, whence also $(A_{\cal F}\circ B_{\cal F})_{i'j'}=1$, from which it follows
  that there exists $\tu{C,D}\in\mathcal{F}$ which covers $\tu{i',j'}$.
  Due to Lemma \ref{thm:int} (b), $\tu{C,D}\in \mathcal{I}_{i'j'}$ whence
  Lemma \ref{thm:int} (b) and $\mathcal{I}_{i'j'}\subseteq\mathcal{I}_{ij}$ yield that
  $\tu{C,D}$ covers $\tu{i,j}$ and thus $(A_{\cal F}\circ B_{\cal F})_{ij}=1$,
  contradicting  the assumption. 
  
   (b) By contradiction, assume that  there exists
  $\tu{i,j}$ for which $\ess(I)_{ij}=1$ and $J_{ij}=0$.
   Since $\ess(I)\leq I$, we have $I_{ij}=1$.
   To prove the assertion, it hence suffices to show that there exists a set 
   ${\cal F}\subseteq {\cal B}(I)$ for which $J\leq A_{\cal F}\circ B_{\cal F}$
   and yet $(A_{\cal F}\circ B_{\cal F})_{ij}=0$. 
   For this purpose, consider an arbitrary $\mathcal{G}\subseteq {\cal B}(I)$
   for which $J\leq A_{\cal G}\circ B_{\cal G}$ (clearly, such $\mathcal{G}$ exists).
   If $\mathcal{G}$ does not contain any concept from $\mathcal{I}_{ij}$, we are done
   by taking $\mathcal{F}=\mathcal{G}$, since then $(A_{\cal F}\circ B_{\cal F})_{ij}=0$
   by Lemma \ref{thm:int} (c).
   Otherwise, do the following for every $\tu{C,D}\in\mathcal{G}\cap \mathcal{I}_{ij}$:
   Remove $\tu{C,D}$ from $\mathcal{G}$ and add instead for every $\tu{i',j'}$ for which
   $J_{i'j'}=1$ some formal concept $\tu{C_{i'},D_{j'}}\in \mathcal{I}_{i'j'}-\mathcal{I}_{ij}$
   and denote the resulting set of concepts by $\mathcal{F}$.
   Observe that such $\tu{C_{i'},D_{j'}}$ always exists. Namely, since $J\leq I$,
   $J_{i'j'}=1$ implies $I_{i'j'}=1$ and hence due to Lemma  \ref{thm:int} (a),
   $\mathcal{I}_{i'j'}$ is nonempty. Now, 
   $\mathcal{I}_{i'j'}\not=\mathcal{I}_{ij}$, since otherwise  we 
    have $\gamma(i)=\gamma(i')$ and $\mu(j)=\mu(j')$, and since $I$ is clarified,
   this yields $i=i'$ and $j=j'$ which is impossible since we assumed $J_{ij}=0$.
   Since $\mathcal{I}_{ij}$ is minimal w.r.t $\subseteq$ and different from 
   $\mathcal{I}_{i'j'}$, we  get the existence of $\tu{C_{i'},D_{j'}}\in \mathcal{I}_{i'j'}-\mathcal{I}_{ij}$.
    Now,  Lemma \ref{thm:int} (c) implies that $J\leq A_{\cal F}\circ B_{\cal F}$
   and yet $(A_{\cal F}\circ B_{\cal F})_{ij}=0$, finishing the proof.
%   \hfill$\qed$
\end{proof}

Note that the part of the previous theorem claiming the sufficiency to cover the entries $\tu{i,j}$ 
with $\ess(I)_{ij}=1$ to obtain exact decomposition is noted in \cite[p. 130]{GaGl:Ofa}.
% where the 
%authors call such entries tight. 
Note also that an extension of the theorem to general,  non-clarified matrices 
is simple but we omit it. Let us only note that for non-clarified matrices, 
essential matrices are not unique (they are easy to describe in terms of $\ess(I)$
and the duplicit rows and columns).
%instead of a single essential matrix
%one would deal with several such matrices.
%in that the single, smallest essential matrix $\ess(I)$ gets replaced by several minimal matrices,
% but we omit this issue.
Denote the union of intervals $\mathcal{I}_{ij}$ corresponding to 1s in $\ess(I)$ by 
$\mathcal{B}_\mathcal{E}(I)$, i.e.
\begin{equation}\label{eqn:BE}
  \mathcal{B}_\mathcal{E}(I) = \bigcup \{\mathcal{I}_{ij} \mid (\mathcal{E}(I))_{ij}=1 \}.
\end{equation}

\begin{example}
Consider again the matrix $I$ of Example \ref{ex:mark} and its concept lattice $\mathcal{B}(I)$, this time with a reduced labeling. 
The underlined entries in $I$ are just the essential $1$s, i.e. those with $\ess(I)_{ij}=1$.
\begin{center}
\begin{minipage}[t]{0.35\columnwidth}%
\vspace{-5.4cm}
\centerline{
$
  \left( \begin{array}{ccccc}
   \eess{1}  &   \eess{1}   &  0   &  1  &   0\\
   1  &   0   &  0   &  1  &   \eess{1}\\
   0  &   1   &  \eess{1}   &  0  &   0\\
   0  &   0   &  0   &  \eess{1} &   0\\
   {1}  &   1   &  1   &  1  &   0\\
   {1}  &   \eess{1}   &  0   &  0  &   \eess{1}\\   
    \end{array} \right)
$}
\end{minipage}%
\begin{minipage}[t]{0.6\columnwidth}%
\centerline{
\includegraphics[scale=1]{example_ess}
}
\end{minipage}
\end{center}
For instance, while $I_{2a}=1$, we have $\ess(I)_{2a}=0$ since the interval
$\mathcal{I}_{2a}$ contains a different, smaller interval, namely $\mathcal{I}_{2e}$, whence $\mathcal{I}_{2a}$ is not minimal.

The bold part of the diagram corresponds to $\mathcal{B}_{\ess}(I)$, i.e. the union
of the six intervals $\mathcal{I}_{ij}$ for which $\ess(I)_{ij}=1$, cf. (\ref{eqn:BE}).
One can now easily see that the set
\[
    \mathcal{F} =\{
         \tu{\{1,5,6\},\{a,b\}},\tu{\{1,2,4,5\},\{d\}}, \tu{\{2,6\},\{a,e\}},\tu{\{3,5\},\{b,c\}}
   \}
\]
covers all the $\tu{i,j}$ with $\ess(I)_{ij}=1$. Namely, in the diagram of $\mathcal{B}(I)$,
this is equivalent to the fact that that each of the six bold intervals $\mathcal{I}_{ij}$ contains some formal concept 
in $\mathcal{F}$ (see also Remark \ref{rem:mark} and Example \ref{ex:mark}).
Due to Theorem \ref{thm:ess}, $\mathcal{F}$ covers all entries of $I$ containing $1$,
whence $I=A_\mathcal{F}\circ B_\mathcal{F}$, i.e. $\mathcal{F}$ is a set of factor concepts of $I$.
In particular, we have
\begin{equation}\nonumber
  A_\mathcal{F}= 
  \left( \begin{array}{cccc}
	1&1&0&0\\
	0&1&1&0\\
	0&0&0&1\\
	0&1&0&0\\
	1&1&0&1\\
	1&0&1&0\\					
    \end{array} \right)
  \quad\text{and}\quad 
  B_\mathcal{F}= 
  \left( \begin{array}{ccccc}
	1&1&0&0&0\\
	0&0&0&1&0\\
	1&0&0&0&1\\
	0&1&1&0&0\\					
    \end{array} \right)  
\end{equation}

%CHYBA V PRIKLADU:
%Koncept s prazdnym koleckem patri do $\mathcal{I}_{2e}$, tedy ma byt cerny. Vsechny jsou tedy cerne.
%Novy priklad by mel splnovat puvodni pozadavky, dale pak to, ze bool. rank matice $I$ je mensi nez
%$\min(m,n)$, jako je to v tomto priklade. Martine, predelej to tedy a prepis slovni popis (formulace pouzij
%z tohoto prikladu).
%Priklad bych vytvoril bud pomoci basic theorem tak, ze bych nakreslil svaz a k nemu pak nasel
%kontext, nebo nahodnym generovani kontextu cca 5x5, tak aby bool. rank byl <5.
\end{example}

An interesting property of $\ess(I)$ whose further elaboration we utilize
in the new decomposition algorithm in Section \ref{sec:a}  is contained in the following
theorem showing how factorizations of $I$ may be obtained from factorizations
of $\ess(I)$.

\begin{theorem}\label{thm:GFfact}
%    Let $I$ be a Boolean matrix.
    Let $\mathcal{G}\subseteq\mathcal{B}(\ess(I))$ be a set of factor concepts of
    $\ess(I)$, i.e. $\ess(I)=A_\mathcal{G}\circ B_\mathcal{G}$.
    Then every set $\mathcal{F}\subseteq\mathcal{B}(I)$  containing
   for each $\tu{C,D}\in \mathcal{G}$  at least one concept
   from $\mathcal{I}_{C,D}$  is a set of
   factor concepts of $I$, i.e. $I=A_\mathcal{F}\circ B_\mathcal{F}$.
\end{theorem}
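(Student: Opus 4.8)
The plan is to reduce the claim to the essential property of $\ess(I)$ established in Theorem \ref{thm:ess}, and then discharge the hypothesis of that property by a direct entry-wise covering argument. Since $\mathcal{F}\subseteq\mathcal{B}(I)$, every concept in $\mathcal{F}$ corresponds to a rectangle contained in $I$, so $A_\mathcal{F}\circ B_\mathcal{F}\leq I$ holds automatically (equivalently, $E_o(I,A_\mathcal{F}\circ B_\mathcal{F})=0$). By Theorem \ref{thm:ess}, to conclude $I=A_\mathcal{F}\circ B_\mathcal{F}$ it therefore suffices to show $\ess(I)\leq A_\mathcal{F}\circ B_\mathcal{F}$: the essentiality of $\ess(I)$ then forces the remaining $E_u$ error to vanish, so the decomposition is exact.

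First I would fix an entry $\tu{i,j}$ with $(\ess(I))_{ij}=1$ and aim to show $(A_\mathcal{F}\circ B_\mathcal{F})_{ij}=1$. Because $\ess(I)=A_\mathcal{G}\circ B_\mathcal{G}$, Observation \ref{thm:rec} together with (\ref{eqn:A}) supplies a concept $\tu{C,D}\in\mathcal{G}$ covering $\tu{i,j}$, i.e. $i\in C$ and $j\in D$. By hypothesis $\mathcal{F}$ contains some concept $\tu{E,F}\in\mathcal{I}_{C,D}$. Here I would stress the one point requiring care: although $\tu{C,D}$ is a concept of $\ess(I)$, the interval $\mathcal{I}_{C,D}$ is formed inside $\mathcal{B}(I)$ via the operators $\gamma,\mu$ of $I$, treating $C\subseteq X$ and $D\subseteq Y$ merely as sets. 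Under this reading Lemma \ref{thm:int} (b) gives $\mathcal{I}_{C,D}=\{\tu{E,F}\in\mathcal{B}(I)\mid C\subseteq E,\ D\subseteq F\}$, so $\tu{E,F}\in\mathcal{I}_{C,D}$ yields $C\subseteq E$ and $D\subseteq F$. Consequently $i\in C\subseteq E$ and $j\in D\subseteq F$, i.e. $\tu{E,F}$ covers $\tu{i,j}$; since $\tu{E,F}\in\mathcal{F}$, Observation \ref{thm:rec} and (\ref{eqn:A}) give $(A_\mathcal{F}\circ B_\mathcal{F})_{ij}=1$.

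I expect the only genuine subtlety—rather than an obstacle—to be confirming that the construction is well posed, namely that each $\mathcal{I}_{C,D}$ is non-empty so that a concept can in fact be selected from it. This follows because $\tu{C,D}\in\mathcal{B}(\ess(I))$ gives $C\times D\subseteq\ess(I)$, and $\ess(I)\leq I$ by Theorem \ref{thm:ess} (a) yields $C\times D\subseteq I$; Lemma \ref{thm:int} (a) then guarantees $\mathcal{I}_{C,D}\neq\emptyset$. Collecting the entry-wise conclusions over all $\tu{i,j}$ with $(\ess(I))_{ij}=1$ delivers $\ess(I)\leq A_\mathcal{F}\circ B_\mathcal{F}$, and combining this with the reduction in the first paragraph completes the proof through Theorem \ref{thm:ess}.
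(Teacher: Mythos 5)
Your proposal is correct and follows essentially the same route as the paper's proof: select from each interval $\mathcal{I}_{C,D}$ ($\tu{C,D}\in\mathcal{G}$) the concept of $\mathcal{F}$ it contains, use Lemma \ref{thm:int} (b) to get $C\subseteq E$ and $D\subseteq F$, conclude $\ess(I)=A_\mathcal{G}\circ B_\mathcal{G}\leq A_\mathcal{F}\circ B_\mathcal{F}$, and invoke the essentiality of $\ess(I)$ (Theorem \ref{thm:ess}). Your added checks---that $\mathcal{I}_{C,D}$ is non-empty and that the interval is formed in $\mathcal{B}(I)$ from $C,D$ viewed merely as sets---are correct and slightly more careful than the paper, which also mis-cites Lemma \ref{thm:int} (a) where (b) is meant.
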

\begin{proof}
  Let for $\tu{C,D}\in\mathcal{G}$ denote by $\tu{E,F}_{\tu{C,D}}$ a concept
  in $\mathcal{F}\cap\mathcal{I}_{C,D}$ which exists according to the assumption.
  Due to Lemma \ref{thm:int} (a), $C\subseteq E$ and $D\subseteq F$. 
  Since this this is true for every $\tu{C,D}\in\mathcal{G}$, we readily obtain
   $A_\mathcal{G}\circ B_\mathcal{G}\leq A_\mathcal{F}\circ B_\mathcal{F}$.
  The assumption $\ess(I)=A_\mathcal{G}\circ B_\mathcal{G}$ now yields
  $\ess(I)\leq A_\mathcal{F}\circ B_\mathcal{F}$. As $\ess(I)$ is an essential part of $I$,
  we get $I=A_\mathcal{F}\circ B_\mathcal{F}$, finishing the proof. 
%  Due to Theorem \ref{thm:ess} and the definition of an essential part of $I$,
%  it is sufficient to show that $\ess(I)\leq A_\mathcal{F}\circ B_\mathcal{F}$.
%   The claim now follows from the assumption 
%
%  \hfill$\qed$
\end{proof}

\begin{remark}
   Clearly, Theorem \ref{thm:GFfact} may be generalized to arbitrary factorizations 
   of $\ess(I)$. 
  Namely, 
    suppose $\ess(I)=A\circ B$ for some $A\in \{0,1\}^{n\times k}$ and $B\in\{0,1\}^{k\times m}$
    and let $C_l=\{i \mid A_{il}=1\}$, $D_l=\{j \mid B_{lj}=1\}$
    for each $l=1,\dots,k$.
    Then every set $\mathcal{F}\subseteq\mathcal{B}(I)$  containing at least one concept
    from $\mathcal{I}_{C_l,D_l}$ for each $l=1,\dots,k$, is a set of factor concepts of $I$.
   Namely, each $\tu{C_l,D_l}$ may be extended to a formal concept of $\ess(I)$, 
   the collection $\mathcal{G}$ of all such formal concepts satisfies 
   $\ess(I)=A_\mathcal{G}\circ B_\mathcal{G}$ and then Theorem \ref{thm:GFfact} applies.
\end{remark}

  Theorem \ref{thm:GFfact} implies that the rank of $\ess(I)$ provides an upper bound on the rank of $I$:

\begin{theorem}\label{thm:rankIessI}
  For every Boolean matrix $I$ we have
\(
    \mathrm{rank}_\mathrm{B}(I) \leq  \mathrm{rank}_\mathrm{B}(\ess(I)).
\)
\end{theorem}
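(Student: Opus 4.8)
The plan is to unwind the definition of Boolean rank and reduce the claim to the Remark following Theorem \ref{thm:GFfact}. Write $r=\mathrm{rank}_\mathrm{B}(\ess(I))$. By the definition of Boolean rank there exist matrices $A\in\{0,1\}^{n\times r}$ and $B\in\{0,1\}^{r\times m}$ with $\ess(I)=A\circ B$. It then suffices to produce an exact decomposition $I=A_\mathcal{F}\circ B_\mathcal{F}$ using a set $\mathcal{F}\subseteq\mathcal{B}(I)$ with $|\mathcal{F}|\leq r$, since this immediately gives $\mathrm{rank}_\mathrm{B}(I)\leq|\mathcal{F}|\leq r$.

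First I would set $C_l=\{i \mid A_{il}=1\}$ and $D_l=\{j \mid B_{lj}=1\}$ for $l=1,\dots,r$, exactly as in the Remark. The one point that genuinely needs checking is that each interval $\mathcal{I}_{C_l,D_l}$ is non-empty, so that a representative concept can actually be chosen. This follows from Lemma \ref{thm:int}~(a): the rectangle $A_{\_l}\circ B_{l\_}$ is the cross-product $C_l\times D_l$ and, by Observation \ref{thm:rec}, is contained in $\ess(I)$; since $\ess(I)\leq I$ we obtain $C_l\times D_l\subseteq I$, and hence $\mathcal{I}_{C_l,D_l}\neq\emptyset$.

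Next I would choose, for each $l$, one formal concept $\tu{E_l,F_l}\in\mathcal{I}_{C_l,D_l}$ and put $\mathcal{F}=\{\tu{E_l,F_l}\mid l=1,\dots,r\}$. The Remark following Theorem \ref{thm:GFfact} then applies directly and yields $I=A_\mathcal{F}\circ B_\mathcal{F}$, i.e. $\mathcal{F}$ is a set of factor concepts of $I$. Since $\mathcal{F}$ arises by picking one concept per index $l$, we have $|\mathcal{F}|\leq r$ (equality may fail if two choices coincide, which only strengthens the bound). Therefore $\mathrm{rank}_\mathrm{B}(I)\leq|\mathcal{F}|\leq r=\mathrm{rank}_\mathrm{B}(\ess(I))$, as required.

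There is essentially no deep obstacle here: given the Remark, the proof is a short chain of reductions. The only step demanding a little care is the uniform non-emptiness of the intervals $\mathcal{I}_{C_l,D_l}$ over all $l$, which is needed so that $\mathcal{F}$ can be assembled at all; this is precisely what the from-below inclusion $\ess(I)\leq I$ secures through Lemma \ref{thm:int}~(a), so it is the approximation-from-below character of $\ess(I)$ that makes the bound go through.
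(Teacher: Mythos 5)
Your proof is correct and follows essentially the same route as the paper's: both take an optimal decomposition $\ess(I)=A\circ B$, verify via $\ess(I)\leq I$ and Lemma~\ref{thm:int}~(a) that the corresponding intervals $\mathcal{I}_{C_l,D_l}$ in $\mathcal{B}(I)$ are non-empty, pick one concept from each, and invoke Theorem~\ref{thm:GFfact} (you via the Remark that extends it to arbitrary factorizations of $\ess(I)$, the paper via the intermediate concept set $\mathcal{G}$, which amounts to the same thing). Your write-up is in fact slightly more explicit than the paper's about where the non-emptiness check and the from-below inclusion enter.
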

\begin{proof}
  Let 
  $k=\mathrm{rank}_\mathrm{B}(\ess(I))$, let 
  $A\in \{0,1\}^{n\times k}$ and $B\in \{0,1\}^{k\times m}$ such that
  $\ess(I)=A\circ B$.
  Consider any $\mathcal{F}\subseteq \mathcal{B}(I)$ containing for each 
  $\tu{C,D}\in\mathcal{G}$ exactly one formal concept in the interval $\mathcal{I}_{C,D}$
  of $\mathcal{B}(I)$. 
  Note that such $\mathcal{F}$ exists
  since for every $\tu{C,D}\in\mathcal{G}$ we have 
    $C\times D\subseteq \ess(I)$ because  $\tu{C,D}$ is a formal concept of $\ess(I)$,
  and hence $\ess(I)\leq I$ implies $C\times D\subseteq I$.
  Due to Lemma \ref{thm:int}  (a), $\mathcal{I}_{C,D}$ is a non-empty interval in
  $\mathcal{B}(I)$.
  According to Theorem \ref{thm:GFfact}, $I=A_\mathcal{F}\circ B_\mathcal{F}$.
  Now, clearly, 
  $\mathrm{rank}_\mathrm{B}(I)\leq |\mathcal{F}|\leq |\mathcal{G}|=
   \mathrm{rank}_\mathrm{B}(\ess(I))$, finishing the proof.
%  \hfill$\qed$
\end{proof}

\begin{remark}
  The estimation by Theorem~\ref{thm:rankIessI} is not tight. Namely,
  as one may check, 
\[
\text{for }
I = 
\left( \begin{array}{ccccc}
1 0 1 1 1\\
0 1 1 0 1\\
0 1 0 0 1\\
1 0 1 1 0\\
    \end{array} \right) = 
\left( \begin{array}{ccc}
1 1 0 \\
0 1 1 \\
0 0 1 \\
1 0 0 \\
    \end{array} \right)
\circ
\left( \begin{array}{ccccc}
1 0 1 1 0\\
0 0 1 0 1\\
0 1 0 0 1\\
    \end{array} \right),
\text{ we have }
     \ess(I) = 
\left( \begin{array}{ccccc}
0 0 0 0 1\\
0 0 1 0 0\\
0 1 0 0 0\\
1 0 0 1 0\\
    \end{array} \right).
\]
While we see that $\mathrm{rank}_\mathrm{B}(I)\leq 3$ (in fact, the rank equals $3$), one
may easily check that $\mathrm{rank}_\mathrm{B}(\ess(I))=4$.
\end{remark}

Another issue %M: preklep, connected
connected to $\ess(I)$ is the following.
Due to Theorem 2 of \citep{BeVy:Dof}, optimal exact decompositions may be obtained 
by using as factors the formal concepts of $\mathcal{B}(I)$. The next theorem
%R4: vynechano improves this result in that it 
shows that even more restricted formal concepts are sufficient, namely
those in $\mathcal{B}_\mathcal{E}(I)$.

\begin{theorem}\label{thm:fcaofEss}
  For every $I$ there exists $\mathcal{F}\subseteq\mathcal{B}_{\ess}(I)$
  with $|\mathcal{F}|=\mathrm{rank}_\mathrm{B}(I)$ 
  for which
  $A_\mathcal{F}\circ B_\mathcal{F}=I$.
% (i.e. $\mathcal{F}$ is a set of factor concepts)
%  and $|\mathcal{F}|=\mathrm{rank}_\mathrm{B}(I)$ (Schein rank, the least number of factors).
%  (Kratce: There exist an optimal factor set $\mathcal{F}\subseteq\mathit{Ess}(\mathcal{B}(X,Y,I))$.)
\end{theorem}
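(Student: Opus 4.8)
The plan is to start from an optimal exact decomposition and prune it down to factors that already live in $\mathcal{B}_{\ess}(I)$, exploiting the fact that only the essential $1$s must be covered. First I would fix an exact decomposition $I=A\circ B$ with $k=\mathrm{rank}_\mathrm{B}(I)$. Applying Theorem \ref{thm:fbof} to this decomposition (for which $E(I,A\circ B)=0$) produces a set $\mathcal{G}\subseteq\mathcal{B}(I)$ of formal concepts with $|\mathcal{G}|\leq k$ satisfying $A_\mathcal{G}\circ B_\mathcal{G}\leq I$ and $E(I,A_\mathcal{G}\circ B_\mathcal{G})\leq 0$; hence $I=A_\mathcal{G}\circ B_\mathcal{G}$, and by minimality of the Boolean rank $|\mathcal{G}|=k$. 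So without loss of generality the starting optimal decomposition already uses formal concepts.

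Next, for each entry with $\ess(I)_{ij}=1$ I would locate a witnessing concept inside $\mathcal{G}$. Since $\ess(I)\leq I$ gives $I_{ij}=1$ and $I=A_\mathcal{G}\circ B_\mathcal{G}$, we have $(A_\mathcal{G}\circ B_\mathcal{G})_{ij}=1$, so Lemma \ref{thm:int}(c) forces $\mathcal{G}\cap\mathcal{I}_{ij}\neq\emptyset$. For each such $\tu{i,j}$ pick one concept $\varphi(i,j)\in\mathcal{G}\cap\mathcal{I}_{ij}$ and put $\mathcal{F}=\{\varphi(i,j)\mid \ess(I)_{ij}=1\}$. By construction $\mathcal{F}\subseteq\mathcal{G}$, so $|\mathcal{F}|\leq k$, and each $\varphi(i,j)$ lies in an interval $\mathcal{I}_{ij}$ with $\ess(I)_{ij}=1$, so $\mathcal{F}\subseteq\mathcal{B}_{\ess}(I)$ by the definition (\ref{eqn:BE}).

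It then remains to verify exactness and optimality. Each chosen $\varphi(i,j)\in\mathcal{I}_{ij}$ covers $\tu{i,j}$ by Lemma \ref{thm:int}(b), so $(A_\mathcal{F}\circ B_\mathcal{F})_{ij}=1$ at every essential entry; together with the trivial bound $\ess(I)_{ij}=0\leq(A_\mathcal{F}\circ B_\mathcal{F})_{ij}$ elsewhere, this yields $\ess(I)\leq A_\mathcal{F}\circ B_\mathcal{F}$. Invoking the sufficiency direction of Theorem \ref{thm:ess}, covering $\ess(I)$ forces $I=A_\mathcal{F}\circ B_\mathcal{F}$. Finally, minimality of $\mathrm{rank}_\mathrm{B}(I)$ gives $|\mathcal{F}|\geq\mathrm{rank}_\mathrm{B}(I)=k$, which combined with $|\mathcal{F}|\leq k$ yields $|\mathcal{F}|=\mathrm{rank}_\mathrm{B}(I)$, as required.

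The step carrying the real content—though short—is the passage from $\ess(I)\leq A_\mathcal{F}\circ B_\mathcal{F}$ to $I=A_\mathcal{F}\circ B_\mathcal{F}$: this is where the whole essential-part apparatus pays off, letting us discard from $\mathcal{G}$ every concept that was needed only for non-essential $1$s while retaining exactness, and thus never increasing the factor count. The one point requiring care is that Theorem \ref{thm:ess} was stated for clarified matrices, whereas the present claim is for every $I$; I would therefore explicitly appeal to the remark following Theorem \ref{thm:ess} that its sufficiency direction holds for general, possibly non-clarified, matrices, since no clarification hypothesis is available here.
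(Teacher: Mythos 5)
Your proof is correct and follows essentially the same route as the paper's: both reduce to an optimal exact factorization by formal concepts (Theorem 2 of \citep{BeVy:Dof}, which you rederive via Theorem \ref{thm:fbof}), use Lemma \ref{thm:int} to place the concepts covering essential entries inside $\mathcal{B}_{\ess}(I)$, and conclude via the sufficiency direction of Theorem \ref{thm:ess} together with minimality of the Boolean rank. The only cosmetic difference is that you select witnesses for the essential entries directly, whereas the paper assumes a concept outside $\mathcal{B}_{\ess}(I)$ and removes it to reach a contradiction; your explicit appeal to the remark that the sufficiency direction of Theorem \ref{thm:ess} holds for non-clarified matrices is a point the paper's proof leaves implicit.
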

\begin{proof}
  Due to Theorem 2 of \citep{BeVy:Dof}, there exists $\mathcal{F}\subseteq\mathcal{B}(I)$ 
  with $|\mathcal{F}|=\mathrm{rank}_\mathrm{B}(I)$ for which
  $A_\mathcal{F}\circ B_\mathcal{F}=I$.
  It is sufficient to  show that $\mathcal{F}\subseteq\mathcal{B}_{\ess}(I)$.
  Suppose by contradiction that there exists 
  $\tu{C,D}\in\mathcal{F}-\mathcal{B}_{\ess}(I)$, i.e. 
  $\tu{C,D}$ does not belong to any minimal $\mathcal{I}_{i,j}$. 
  Since $\mathcal{F}$ covers $I$, and hence also $\mathit{Ess}(I)$, 
  for every $\tu{i',j'}$ with $\ess(I)_{i'j'}=1$, there exists a formal concept
  $\tu{C',D'}\in\mathcal{F}$ which covers $\tu{i',j'}$. By Lemma \ref{thm:int} (b),
  $\tu{C',D'}\in \mathcal{I}_{i',j'}$ and hence $\tu{C',D'}\not=\tu{C,D}$. 
  For $\mathcal{F}'=\mathcal{F}-\{\tu{C,D}\}$ we thus have 
  $A_{\mathcal{F}'}\circ B_{\mathcal{F}'}\geq \ess(I)$ hence 
   $A_{\mathcal{F}'}\circ B_{\mathcal{F}'}=I$ by Theorem \ref{thm:ess}.
  We obtained a set $\mathcal{F}'$ of factors of $I$ with $|\mathcal{F}'|=|\mathcal{F}|-1<
  \mathrm{rank}_\mathrm{B}(I)$, a contradiction. 
%  \hfill$\qed$
\end{proof}

\begin{remark}
  Theorem \ref{thm:fbof}, dealing with approximate from-below factorizations,  
  and Theorem \ref{thm:fcaofEss}, dealing with a stronger restriction of the search space
  for optimal factorizations,
  provide two different improvements of Theorem 2 of \citep{BeVy:Dof}.
  The following example shows that the natural common generalization of Theorems \ref{thm:fbof} and 
  \ref{thm:fcaofEss} does not hold. Namely, the generalization 
 would result from Theorem \ref{thm:fbof}  by replacing the condition 
 $\mathcal{F}\subseteq\mathcal{B}(I)$ by  $\mathcal{F}\subseteq\mathcal{B}_\mathcal{E}(I)$.
  Consider the following  %Boolean 
matrix $I$ and the %formal 
concept $\tu{C,D}\in\mathcal{B}(I)$:
  
 \[
  I=
  \left(
   \begin{array}{cccc}
      1 & \eess{1} & 0 & 0  \\
      1 & 0 & \eess{1} & 0  \\
      1 & 0 & 0 & \eess{1}
  \end{array}
  \right),
  \qquad
  \tu{C,D}=\{\{1,2,3\},\{1\}\}.
 \]
 For $\mathcal{G}=\{\tu{C,D}\}$ and $A=A_\mathcal{G}$ and 
  $B=B_\mathcal{G}$ we have $A\circ B\leq I$
 and $E(I,A\circ B)=3$. 
 Now, $\tu{C,D}\not\in\mathcal{B}_{\ess}(I)$ and there is no one-element subset
 $\mathcal{F}\subseteq \mathcal{B}_{\ess}(I)$ for which
 $E(I,A_\mathcal{F}\circ B_\mathcal{F})\leq E(A\circ B,I)$,
  i.e. the generalization does not hold.
 \end{remark}

The next lemma is easy to see and shows that $\ess(I)$ can be computed easily:
%RR: from $I$.

%M: pise se ze two following, ale podminky jsou  tri
%R4: opraveno
%------------------------------
\begin{lemma}\label{thm:EssComp}
  $\ess(I)_{ij}=1$ if an only if the following conditions are fulfilled:
  \begin{itemize}
    \item[\emph{(a)}] $I_{ij}=1$;
   
%RRR: u (b) a (c) doplneny zavorky
    \item[\emph{(b)}]
    for every $i'$ with $\{i'\}^\upts\subset\{i\}^\upts$ we have $I_{i'j}=0$
    (i.e., no $i'$ whose row is  contained in the row of $i$ contains $j$);
%RRR: (c) byly sipky nahoru, dal pak $I_{i'j}=0$
    \item[\emph{(c)}]
    for every $j'$ with $\{j'\}^\downts\subset\{j\}^\downts$ we have $I_{i'j}=0$
    (i.e., no $j'$ whose column is  contained in the column of $j$ contains $i$).
  \end{itemize}  
\end{lemma}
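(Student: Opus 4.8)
The plan is to unwind the definition $\ess(I)_{ij}=1$ iff $\mathcal{I}_{ij}$ is non-empty and minimal w.r.t.\ $\subseteq$, and to translate the two conjuncts into the stated local conditions. Non-emptiness is exactly condition (a): by Lemma \ref{thm:int}(a), $\mathcal{I}_{ij}$ is non-empty iff $I_{ij}=1$. The whole content therefore lies in characterizing minimality of a non-empty $\mathcal{I}_{ij}$ by (b) together with the symmetric column condition (c) (read, as its parenthetical intends, as $I_{ij'}=0$ for every $j'$ with $\{j'\}^\downts\subset\{j\}^\downts$). Throughout I would use the containment criterion (\ref{eqn:ord-int}), which says $\mathcal{I}_{i'j'}\subseteq\mathcal{I}_{ij}$ iff $\{i'\}^\upts\subseteq\{i\}^\upts$ and $\{j'\}^\downts\subseteq\{j\}^\downts$; consequently $\mathcal{I}_{i'j'}=\mathcal{I}_{ij}$ iff both closures coincide, so $\mathcal{I}_{i'j'}\subset\mathcal{I}_{ij}$ iff both inclusions hold with at least one strict.

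For the forward implication I would argue by contraposition. Suppose (b) fails, so there is $i'$ with $\{i'\}^\upts\subset\{i\}^\upts$ and $I_{i'j}=1$. By Lemma \ref{thm:int}(a) the interval $\mathcal{I}_{i'j}$ is non-empty, and by the criterion above (with $\{j\}^\downts\subseteq\{j\}^\downts$ trivial and $\{i'\}^\upts\subset\{i\}^\upts$ strict) it is strictly contained in $\mathcal{I}_{ij}$; hence $\mathcal{I}_{ij}$ is not minimal. The failure of the column condition (c) is handled symmetrically by exhibiting a strictly smaller $\mathcal{I}_{ij'}$. Thus minimality forces both (b) and (c).

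The reverse implication is the step I expect to be the main obstacle, because non-minimality supplies a witness pair $(i',j')$ that may differ from $(i,j)$ in \emph{both} coordinates at once, whereas (b) and (c) only forbid single-coordinate descents. The key observation that resolves this is that a diagonal witness can always be straightened into an axis-parallel one. Concretely, assume (a)--(c) hold but $\mathcal{I}_{ij}$ is not minimal; then there are $i',j'$ with $I_{i'j'}=1$, $\{i'\}^\upts\subseteq\{i\}^\upts$, $\{j'\}^\downts\subseteq\{j\}^\downts$, and at least one inclusion strict. If $\{i'\}^\upts\subset\{i\}^\upts$ is strict, note that $I_{i'j'}=1$ gives $i'\in\{j'\}^\downts\subseteq\{j\}^\downts$, hence $I_{i'j}=1$; this $i'$ violates (b). Symmetrically, if $\{j'\}^\downts\subset\{j\}^\downts$ is strict, then $j'\in\{i'\}^\upts\subseteq\{i\}^\upts$ yields $I_{ij'}=1$, violating (c). Either way the hypotheses are contradicted, so $\mathcal{I}_{ij}$ must be minimal, which together with the equivalence (a) $\Leftrightarrow$ non-emptiness completes the characterization.
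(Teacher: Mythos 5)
Your proof is correct. The paper in fact offers no argument for this lemma --- it is introduced only with ``the next lemma is easy to see'' --- so there is nothing to compare against beyond the intended unwinding of the definition, which is exactly what you carry out: (a) is non-emptiness via Lemma \ref{thm:int}(a), and (b)--(c) characterize minimality via the containment criterion (\ref{eqn:ord-int}). You also correctly read condition (c) as $I_{ij'}=0$, repairing the paper's evident typo. Most importantly, you isolate and resolve the one step that is genuinely not immediate: a witness $\tu{i',j'}$ to non-minimality may a priori differ from $\tu{i,j}$ in both coordinates, and your observation that $I_{i'j'}=1$ together with $\{j'\}^{\downts}\subseteq\{j\}^{\downts}$ forces $I_{i'j}=1$ (and symmetrically $I_{ij'}=1$) is precisely the ``straightening'' needed to reduce a diagonal descent to the single-coordinate conditions (b) and (c). The only implicit convention worth making explicit is that minimality of $\mathcal{I}_{ij}$ is understood among the non-empty intervals, i.e.\ over pairs $\tu{i',j'}$ with $I_{i'j'}=1$; you adopt this reading, which is the one the paper intends.
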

%\begin{proof}
%   $\{i'\}^\upts\subset\{i\}^\upts$ is equivalent to $\gamma(i) < \gamma(i')$
%   and $I_{i'j}=0$ is equivalent to $\gamma(i')\not\leq\mu(j)$.
%    Hence, (a) is means that for every row $i'$, if $\gamma(i) < \gamma(i')$ then $\gamma(i')\not\leq\mu(j)$.
%   Likewise, (b) means that for every column $j'$, if 
%   $\mu(j') < \mu(j)$ then $\gamma(i)\not\leq\mu(j')$.
%   Hence, (a) and (b) mean that $\mathcal{I}_{i,j} =[\gamma(i),\mu(j)]$ is minimal.
% \end{proof}

%%%%%%%%%%%%%%%%%%%%%%%%%%%%%%%%%%%%%%%%%%%%%%%%%%%%%%%
\section{\GreEss: A New BMF Algorithm}\label{sec:a}

The new BMF algorithm described in this section is primarily designed for AFP but
can also be used for DBP (see Section \ref{sec:p}).
The algorithm is based on the properties
of essential parts $\ess(I)$ of Boolean matrices $I$ established above.
%RR: in Section \ref{sec:fba}.
Two important features of essential parts we start with
are the following. First, since $\ess(I)$ represents
the entries of $I$ with $1$ whose coverage by arbitrary factors guarantees
% the coverage of all $1$s in $I$, and thus grarantees 
an exact decomposition of $I$ by these factors,
$\ess(I)$ provides us with information about where to focus in the search
for factors of $I$.
%in constructing factors for decomposition of $I$, i.e. in covering the $1$s in $I$.
Second, since the number $||\ess(I)||$ of $1$s in $\ess(I)$ tends to be significantly
smaller than the number $||I||$ of $1$s in $I$ (see Section \ref{sec:ac}),
covering the $1$s in $\ess(I)$ tends to be simpler than covering the $1$s in the original matrix $I$.
Note also that due to Lemma \ref{thm:EssComp}, % (\ref{eqn:ord-int}), 
$\ess(I)$ is computed easily.

In particular, we build upon the following idea. We intend to form a collection $\mathcal{G}$ of
possibly overlapping groups of essential $1$s, i.e. $1$s in $\ess(I)$.
These are 
considered as ``seeds'' for finding factors of $I$ in that each group $g\in \mathcal{G}$ 
is to be covered by a factor---a formal concept $\tu{C,D}\in\mathcal{B}(I)$
(taking formal concepts of $I$ as factors is optimal for AFP due to Theorem \ref{thm:fbof}).
Since each concept $\tu{C,D}\in\mathcal{B}(I)$ corresponds to a maximal rectangle in 
$I$, every group $g$ covered by $\tu{C,D}$ may be extended to a maximal rectangle
in $\ess(I)$, i.e. to a formal concept in $\mathcal{B}(\ess(I))$, which will still be covered
by $\tu{C,D}$.
Therefore, reasonable candidates for the groupings $\mathcal{G}$ are sets of formal 
concepts of $\ess(I)$, i.e. $\mathcal{G}\subseteq\mathcal{B}(\ess(I))$.

Considering $\mathcal{G}\subseteq\mathcal{B}(\ess(I))$ is a reasonable
strategy in view of Theorem \ref{thm:GFfact}, which suggests to compute a factor
set $\mathcal{G}$ of $\ess(I)$ and then compute from $\mathcal{G}$ a factor set 
$\mathcal{F}$ of $I$.
Our algorithm utilizes an improvement of this idea. Namely,  we compute 
a  set $\mathcal{G}\subseteq\mathcal{B}(\ess(I))$ 
which need not be a factorization of $\ess(I)$, i.e. may be smaller and satisfy
$A_\mathcal{G}\circ B_\mathcal{G}<\ess(I)$. 
Nevertheless,  $\mathcal{G}$ still has
the following important property:
If for each $\tu{C,D}\in\mathcal{G}$ we pick exactly one concept $c_{\tu{C,D}}$ in the 
interval $\mathcal{I}_{C,D}$ of $\mathcal{B}(I)$, then no matter how we pick,
the resulting $\mathcal{F}=\{c_{\tu{C,D}}\mid \tu{C,D}\in\mathcal{G}\}$
provides a factorization of $I$, i.e. $A_\mathcal{F}\circ B_\mathcal{F}=I$.
The pseudocode of our algorithm,  called \GreEss,  is described in Algorithm \ref{alg:GreEss} and %Algorithm
 \ref{alg:CC}.
\GreEss\ is designed for AFP,
i.e. it takes as its input a
%RR: Boolean 
matrix $I$ and $\varepsilon\geq 0$ and produces
a set $\mathcal{F}$ of formal concepts of $I$ for which
%and the induced) matrices
%$A_\mathcal{F}$ and $B_\mathcal{F}$ for which 
$||I-A_\mathcal{F}\circ B_\mathcal{F}||\leq \varepsilon$.
Hence, for $\varepsilon=0$ the algorithm produces an exact decomposition of $I$.

\begin{algorithm}
\small
\DontPrintSemicolon
\LinesNumbered
\KwIn{Boolean matrix $I$ and $\varepsilon\geq 0$}
\KwOut{set $\mathcal{F}$ of factors for which 
   $||I-A_\mathcal{F}\circ B_\mathcal{F}||\leq\varepsilon$}
\BlankLine
$\mathcal{G} \leftarrow \ComputeIntervals(I)$\;
$U \leftarrow \{\left<i,j\right> | I_{ij} = 1 \}$;
$\mathcal{F} \leftarrow \emptyset$ \;

%\BlankLine

\While{$|U| >\varepsilon$}
{
$s \leftarrow 0$\;
\ForEach{$\left<C,D \right> \in \mathcal{G}$}
{
% MISTO $J =  D^{\downarrow_I} \times C^{\uparrow_I}$ \;   DAVAM
$J \leftarrow   I\cap (D^{\down_I}\times C^{\up_I})$ \;  
 $F \leftarrow \emptyset$;
$s_{\tu{C,D}} \leftarrow 0$ \;
 \While{exists $j\in C^{\uparrow_I}-F$ s.t. %misto C_f jsem dal C
$|(F \cup \{j\})^{\downarrow_J} \times (F \cup \{j\})^{\downarrow_J \uparrow_J} \cap U|>
  s_{\tu{C,D}} $ }
 {
    select $j$ maximizing 
    $|(F \cup \{j\})^{\downarrow_J} \times (F \cup \{j\})^{\downarrow_J \uparrow_J} \cap U|$ \;
    $F \leftarrow (F \cup \{j\})^{\downarrow_J \uparrow_J}$
   $E \leftarrow (F \cup \{j\})^{\downarrow_J}$ \;
   $s_{\tu{C,D}}   \leftarrow |E \times F \cap U|$ \;
 }
 \If{$s_{\tu{C,D}}>s$}{
% UPRAVIT ZALOMENI, , aby tri prikazy prirazeni byly pod sebou
    $\tu{E',F'}\leftarrow\tu{E,F}$ \; 
    $\tu{C',D'}\leftarrow\tu{C,D}$ \; 
    $s\leftarrow s_{\tu{C,D}}$}   %TADY PRIDAN UPDATE S
}
\KwSty{add} $\left<E', F' \right>$ \KwSty{to} $\mathcal{F}$ \;
 \KwSty{remove}  $\tu{C',D'}$ \KwSty{from} $\mathcal{G}$ \;

$U\leftarrow U-E'\times F'$\;

}
\Return{$\mathcal{F}$}

\caption{\GreEss}
\label{alg:GreEss}
\end{algorithm}

\begin{algorithm}
\small
\DontPrintSemicolon
\LinesNumbered
\KwIn{Boolean matrix $I$}
\KwOut{Set $\mathcal{G}\subseteq\mathcal{B}(\ess(I))$}

$\ess \leftarrow \ess(I)$ 
$U \leftarrow \{\left< i, j\right> | \ess_{ij} = 1\}$ 
%RRR: pridano, MARTINE zkontroluj celkove cisla radku v popisu algoritmu
$\mathcal{G}\leftarrow\emptyset$

%\BlankLiner

\While{$U \neq \emptyset$}
{
$D \leftarrow \emptyset$;
$s \leftarrow 0$ \;

 \While{exists $j \notin d$ with
$|((D \cup \{j\})^{\downarrow_{\ess}})^{\uparrow_I \downarrow_I} \times ((D \cup \{j\})^{\downarrow_{\ess} \uparrow_{\ess}})^{\downarrow_I \uparrow_I} \cap U|>s$ }
 {
 select $j$ 
%R4: which maximize -> which maximizes
  which maximizes $|((D \cup \{j\})^{\downarrow_{\ess}})^{\uparrow_I \downarrow_I} \times ((D \cup \{j\})^{\downarrow_{\ess} \uparrow_{\ess}})^{\downarrow_I \uparrow_I} \cap U|$ \;
$D \leftarrow (D \cup \{j\})^{\downarrow_{\ess} \uparrow_{\ess}}$;
$C \leftarrow (D \cup \{j\})^{\downarrow_{\ess}} $ \;
$s  \leftarrow |C^{\uparrow_I \downarrow_I} \times D^{\downarrow_I \uparrow_I} \cap U|$ \;
 }
 
\KwSty{add} $\left<C,D \right>$ \KwSty{to} $\mathcal{G}$ \;

$U\leftarrow U-C^{\uparrow_I \downarrow_I} \times D^{\downarrow_I \uparrow_I}$

}
%R4: \Return{$C, D$}
\Return{$\mathcal{G}$}

\caption{\ComputeIntervals}
\label{alg:CC}
\end{algorithm}

We now provide a detailed description of this algorithm and justify its correctness.
We shall need the following lemma.
\begin{lemma}\label{thm:ICD-BJ}
  Let $C\times D\subseteq I$, i.e. $I_{ij}=1$ for every $i\in C$ and $j\in D$, let $J=I\cap (D^{\down_I}\times C^{\up_I})$.
  Then $\mathcal{I}_{C,D}=\mathcal{B}(D^{\down_I},C^{\up_I}, J)$.
\end{lemma}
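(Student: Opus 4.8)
The plan is to prove the set equality by double inclusion, viewing both sides as sets of pairs $\tu{E,F}$ and showing that such a pair lies in the interval $\mathcal{I}_{C,D}$ exactly when it is a formal concept of the restricted context $J=I\cap(D^{\down_I}\times C^{\up_I})$. Write $X'=D^{\down_I}$ and $Y'=C^{\up_I}$ for the object and attribute sets of $J$. Two elementary facts drive the argument. First, the hypothesis $C\times D\subseteq I$ is equivalent to $C\subseteq D^{\down_I}=X'$ and to $D\subseteq C^{\up_I}=Y'$; moreover $X'$ is an extent and $Y'$ an intent of $I$, being the extent of $\mu(D)$ and the intent of $\gamma(C)$. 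Second, since $J$ is simply the restriction of $I$ to the rows $X'$ and columns $Y'$, its derivation operators are the restrictions of those of $I$: for $E\subseteq X'$ we have $E^{\up_J}=E^{\up_I}\cap Y'$, and for $F\subseteq Y'$ we have $F^{\down_J}=F^{\down_I}\cap X'$. I will combine these with the characterization $\mathcal{I}_{C,D}=\{\tu{E,F}\in\mathcal{B}(I)\mid C\subseteq E,\ D\subseteq F\}$ from Lemma~\ref{thm:int}(b).

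For the inclusion $\mathcal{I}_{C,D}\subseteq\mathcal{B}(X',Y',J)$, take $\tu{E,F}\in\mathcal{I}_{C,D}$. By Lemma~\ref{thm:int}(b) it is a concept of $I$ with $C\subseteq E$ and $D\subseteq F$; antitonicity of the Galois connection then gives $E=F^{\down_I}\subseteq X'$ and $F=E^{\up_I}\subseteq Y'$. Substituting into the restriction formulas, $E^{\up_J}=E^{\up_I}\cap Y'=F\cap Y'=F$ and $F^{\down_J}=F^{\down_I}\cap X'=E\cap X'=E$, so $\tu{E,F}$ is a concept of $J$.

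The reverse inclusion is the substantive part, and the main obstacle is that a concept of the subcontext $J$ need not, a priori, be a concept of the full matrix $I$; upgrading the $J$-closures to $I$-closures is exactly where the special shape $X'=D^{\down_I}$, $Y'=C^{\up_I}$ is needed. So let $\tu{E,F}$ be a concept of $J$, i.e. $E\subseteq X'$, $F\subseteq Y'$, $E^{\up_J}=F$, $F^{\down_J}=E$. First I would establish $D\subseteq F$ and $C\subseteq E$: every $i\in E\subseteq D^{\down_I}$ shares all attributes of $D$, so $D\subseteq E^{\up_I}$, and since $D\subseteq Y'$ this gives $D\subseteq E^{\up_I}\cap Y'=E^{\up_J}=F$; symmetrically, every $j\in F\subseteq C^{\up_I}$ is shared by all of $C$, so $C\subseteq F^{\down_I}$, and $C\subseteq X'$ yields $C\subseteq F^{\down_J}=E$. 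Now $C\subseteq E$ forces $E^{\up_I}\subseteq C^{\up_I}=Y'$, whence $F=E^{\up_J}=E^{\up_I}\cap Y'=E^{\up_I}$; dually $D\subseteq F$ forces $F^{\down_I}\subseteq D^{\down_I}=X'$, whence $E=F^{\down_J}=F^{\down_I}\cap X'=F^{\down_I}$. Thus $\tu{E,F}$ is a genuine formal concept of $I$ satisfying $C\subseteq E$ and $D\subseteq F$, so by Lemma~\ref{thm:int}(b) it lies in $\mathcal{I}_{C,D}$. The two inclusions give $\mathcal{I}_{C,D}=\mathcal{B}(D^{\down_I},C^{\up_I},J)$, and I would close by noting that this is the familiar FCA fact that an interval of a concept lattice is itself the concept lattice of the box subcontext cut out by its top extent and bottom intent.
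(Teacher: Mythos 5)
Your proof is correct and follows essentially the same route as the paper's: both directions are established by double inclusion, using the fact that the derivation operators of the restricted context $J$ agree with those of $I$ on subsets of $D^{\down_I}$ and $C^{\up_I}$, and that $C\subseteq E$, $D\subseteq F$ force $E^{\up_I}\subseteq C^{\up_I}$ and $F^{\down_I}\subseteq D^{\down_I}$ so that the $J$-closures coincide with the $I$-closures. Your explicit restriction formulas $E^{\up_J}=E^{\up_I}\cap C^{\up_I}$ and $F^{\down_J}=F^{\down_I}\cap D^{\down_I}$ merely package the same computation the paper carries out inline.
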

\begin{proof}
  Let $\tu{E,F}\in \mathcal{I}_{C,D}$. Since the least and the greatest concepts in $\mathcal{I}_{C,D}$
  are $\tu{C^{\up_I\down_I},C^{\up_I}}$ and $\tu{D^{\down_I},D^{\down_I\up_I}}$, respectively, we have
  $E\subseteq D^{\down_I}$ and $F\subseteq C^{\up_I}$. Since $E^{\up_I}=F$ and $F^{\down_I}=E$ and since
  $J$ is the restriction of $I$ to $D^{\down_I}\times C^{\up_I}$, we clearly have 
  $E^{\up_J}=F$ and $F^{\down_J}=E$, establishing $\tu{E,F}\in\mathcal{B}(D^{\down_I},C^{\up_I}, J)$.
  
   Conversely, let $\tu{E,F}\in\mathcal{B}(D^{\down_I},C^{\up_I}, J)$.
   Clearly, $C\times C^{\up_I}\subseteq I$ and $D^{\down_I}\times D\subseteq I$.
   Since $C\times D\subseteq I$, we have $C\subseteq D^{\down_I}$ and $D\subseteq C^{\up_I}$.
   Therefore,  
   since $J$ is a restriction of $I$, $C\times C^{\up_I}\subseteq J$ and $D^{\down_I}\times D\subseteq J$.
   Since $F\subseteq C^{\up_I}$ and $E\subseteq D^{\down_I}$,
   we obtain $C\subseteq C^{\up_I\down_J}\subseteq F^{\down_J}=E$ and
   $D\subseteq D^{\down_I\up_J}\subseteq E^{\up_J}=F$.
   It remains to verify $E^{\up_I}=F$ and $F^{\down_I}=E$. 
   $E^{\up_I}\supseteq F$ follows directly from $E^{\up_J}=F$  and $J\leq I$. 
   On the other hand, if $j\in E^{\up_I}$ then since $C\subseteq E$ and hence $E^{\up_I}\subseteq C^{\up_I}$,
   we have $j\in C^{\up_I}$. This means that $j$ is an attribute of the context 
   $\tu{D^{\down_I},C^{\up_I}, J}$ nd since $J$ is a restriction of $I$, $j\in E^{\up_I}$ implies
   $j\in E^{\up_J}=F$, proving $E^{\up_I}\subseteq F$.
   $F^{\down_I}=E$ can be verified in a similar manner.
\end{proof}
\GreEss\ first calls \ComputeIntervals\ (described in detail below) which computes the aforementioned $\mathcal{G}$.
In the loop %M: chybne radky, ma byt 3--22
3--22, \GreEss\ is picking concepts $\tu{E',F'}$ from intervals
$\mathcal{I}_{C,D}$, for  $\tu{C,D}\in\mathcal{G}$, in such a way that at most one
concept is selected from every interval, until the size of the collection $U$ of uncovered
entries of $I$ is less than $\varepsilon$.
In %M: chybne radky, ma byt 5--18
5--18, the yet unused intervals $\mathcal{I}_{\tu{C,D}}$ are searched in a greedy
manner as follows. We start by the attribute concepts
$\gamma({j})\in\mathcal{I}_{C,D}$
and try to extend them greedily by adding attributes (loop 8--12).
Due to Lemma \ref{thm:ICD-BJ},
restricting to $J$ and using ${}^{\up_J}$ and ${}^{\down_J}$  guarantees that we do not leave $\mathcal{I}_{C,D}$.
Note that the greedy extension by attributes is inspired by \citep{BeVy:Dof}.
A possible extension of the so far best found $\tu{E,F}$ is accepted (l. 8) if 
the extended concept $\tu{ (F \cup \{j\})^{\downarrow_J},(F \cup \{j\})^{\downarrow_J \uparrow_J}}$
covers a larger number of entries of $U$ than the number $s_{\tu{C,D}}$ covered by $\tu{E,F}$. 
The best such concept of all the unused intervals, denoted $\tu{E',F'}$, is
then added to $\mathcal{F}$, the interval is marked as used
by removing $\tu{C',D'}$ from $\mathcal{G}$ %M: chybne radky, ma byt 20
(l. 20), and
the entries covered by $\tu{E',F'}$ are removed from $U$ %M: chybne radky, ma byt 21 
(l. 21).
\ComputeIntervals\ computes $\ess(I)$, computes and adds to $\mathcal{G}$
the concepts of $\mathcal{B}(\ess(I))$ computed in a greedy manner by
starting from attribute concepts and adding attributes similarly as above.
The difference, however, is that the entries of $U$ that are considered as covered by
a candidate $\tu{C,D}$ are those in
$C^{\uparrow_I \downarrow_I} \times D^{\downarrow_I \uparrow_I}$,
i.e. not only those of $C\times D$.
This is possible because the factors for $I$  are selected from the intervals $\mathcal{I}_{C,D}$
in \GreEss\ and due to Lemma \ref{thm:int} (b), every such factor covers
$C^{\uparrow_I \downarrow_I} \times D^{\downarrow_I \uparrow_I}$.
This is why $\mathcal{G}$ may not be  a factorization of $\ess(I)$,
i.e. may be smaller.
These considerations tell us:

%M: tady bych dopsal dukaz.
%R4: nechal bych to byt, pri prochazeni mi argumenty pripadaly jasne
\begin{theorem}
 \GreEss\ is correct and provides a from-below approximation~of~$I$.
% \GreEss computes a from-below approximation~of~$I$ that provides an approximate solution to 
% AFP, i.e. $E(I,A_\mathcal{F}\circ B_\mathcal{F})$.
\end{theorem}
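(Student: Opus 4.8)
The plan is to split the statement into two assertions and prove each against a single loop invariant: that \GreEss\ produces a \emph{from-below} approximation (i.e. $A_\mathcal{F}\circ B_\mathcal{F}\leq I$, equivalently $E_o=0$) and that it is \emph{correct} (it halts and its output obeys $||I-A_\mathcal{F}\circ B_\mathcal{F}||\leq\varepsilon$). Throughout the run I would maintain the invariant that $U$ equals the set of all $\tu{i,j}$ with $I_{ij}=1$ not yet covered by the concepts collected in $\mathcal{F}$, and that every pair $\tu{E',F'}$ added to $\mathcal{F}$ is a formal concept of $I$.

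For the from-below part I would show that each $\tu{E',F'}$ selected in the inner loop (l.~8--12) lies in the interval $\mathcal{I}_{C',D'}$. This is exactly what Lemma \ref{thm:ICD-BJ} provides: since $C'\times D'\subseteq I$, the restriction $J=I\cap(D'^{\down_I}\times C'^{\up_I})$ satisfies $\mathcal{I}_{C',D'}=\mathcal{B}(D'^{\down_I},C'^{\up_I},J)$, so building $\tu{E',F'}$ via the $J$-closures ${}^{\down_J}$ and ${}^{\up_J}$ keeps us inside $\mathcal{I}_{C',D'}\subseteq\mathcal{B}(I)$. Hence each $\tu{E',F'}$ is a formal concept of $I$ with $E'\times F'\subseteq I$, and by Observation \ref{thm:rec} we get $A_\mathcal{F}\circ B_\mathcal{F}=\bigvee_{\tu{E',F'}\in\mathcal{F}}E'\times F'\leq I$. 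This gives $E_o=0$, so $E(I,A_\mathcal{F}\circ B_\mathcal{F})=E_u=|U|$ at every stage, which also validates the invariant on $U$.

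The heart of correctness is a coverage guarantee for the set $\mathcal{G}$ returned by \ComputeIntervals. First, \ComputeIntervals\ halts: at the start of each outer pass $U\neq\emptyset$ holds an essential $1$, say $\tu{i_0,j_0}$, and already the attribute $j_0$ yields a concept whose rectangle $C^{\up_I\down_I}\times D^{\down_I\up_I}$ contains $\tu{i_0,j_0}$, so the final removal strictly shrinks $U$. On exit $U=\emptyset$, i.e. the minimal rectangles $C^{\up_I\down_I}\times D^{\down_I\up_I}$, $\tu{C,D}\in\mathcal{G}$, cover every essential $1$. By Lemma \ref{thm:int}(b) each concept of $\mathcal{I}_{C,D}$ contains $C^{\up_I\down_I}$ in its extent and $D^{\down_I\up_I}$ in its intent, hence covers that minimal rectangle; each such interval is moreover non-empty by Lemma \ref{thm:int}(a). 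Consequently, once one concept has been drawn from \emph{every} interval of $\mathcal{G}$ we have $A_\mathcal{F}\circ B_\mathcal{F}\geq\ess(I)$, and Theorem \ref{thm:ess} upgrades this to $A_\mathcal{F}\circ B_\mathcal{F}=I$.

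Finally I would close the termination-and-bound argument for the main loop (l.~3--22). While $|U|>\varepsilon\geq0$ we have $U\neq\emptyset$, so some $1$ of $I$ is uncovered; by Theorem \ref{thm:ess} this forces an \emph{essential} $1$ to be uncovered, and the interval of $\mathcal{G}$ covering it through its minimal rectangle cannot yet have been used, so it is still available. For that interval the greedy inner loop (started from the relevant attribute) attains positive coverage, so the foreach selects a concept with $s>0$, one interval is deleted from $\mathcal{G}$, and $|U|$ drops by at least one. As $\mathcal{G}$ is finite and exactly one interval disappears per pass, the loop halts, and since its only exit is $|U|\leq\varepsilon$, the output satisfies $E(I,A_\mathcal{F}\circ B_\mathcal{F})=|U|\leq\varepsilon$. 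I expect the main obstacle to be the coverage guarantee of the previous paragraph: the delicate point is that the greedy draws \emph{arbitrary} concepts from the intervals rather than their bottom elements, yet still covers all of $\ess(I)$—this is precisely where Lemma \ref{thm:int}(b) (every interval member contains the minimal rectangle) must be combined with the sufficiency of $\ess(I)$ from Theorem \ref{thm:ess}.
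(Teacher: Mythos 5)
Your proposal is correct and follows essentially the same route as the paper, which justifies the theorem informally via the two paragraphs preceding it: Lemma \ref{thm:ICD-BJ} to guarantee that the greedy closures ${}^{\up_J},{}^{\down_J}$ never leave the interval $\mathcal{I}_{C,D}$ (hence every selected $\tu{E',F'}$ is a concept of $I$ and the factorization is from below), and Lemma \ref{thm:int}(b) together with Theorem \ref{thm:ess} to conclude that covering the rectangles $C^{\up_I\down_I}\times D^{\down_I\up_I}$ for all $\tu{C,D}\in\mathcal{G}$ forces $A_\mathcal{F}\circ B_\mathcal{F}=I$. Your write-up merely makes explicit the termination and progress arguments that the paper leaves implicit.
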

\section{Experimental Evaluation}
\label{sec:e}

In this section, we provide an experimental evaluation of  \GreEss\ 
and its comparison with five main existing BMF algorithms, described in 
Section \ref{sec:ac}.
We use synthetic and real datasets in a scenario similar to that used in 
\citep{MiMiGiDaMa:TDBP} and other
%R4 papers. -> papers on BMF.
papers on BMF.
In addition, we provide statistical evaluation of the characteristics described in 
Section \ref{sec:fba}.

%-------------------------------------------------------------
\subsection{Algorithms %for Comparison 
and Datasets}\label{sec:ac}
%For experimental comparison with the techniques studied in our paper,
%we chose two available algorithms, namely the \Asso algorithm from
%\cite{MiMiGiDaMa:TDBP} and Algorithm 2 from \cite{BeVy:Dof}  which we denote
%A2 in this paper. Both are well-described in the literature, were tested
%experimentally on extensive collections of datasets and perform
%reasonably well.
%We omitted some other approaches mainly because htey are not described well
%in the literature or are not efficient (basically because they perform a kind of 
%a brute force search).

\paragraph{Algorithms}
We now describe the algorithms used in our comparison.
Further information and comments regarding these algorithms are provided
in the next parts of this section.

%\paragraph{Tiling}
\emph{\Tiling}
 is the algorithm proposed in \citep{GeGoMi:Td} for the Minimum Tiling Problem
(Remark \ref{rem:MTP}). The algorithm utilizes a modification of the $k$-LTM algorithm
proposed also in \citep{GeGoMi:Td}. To find a small tiling, the algorithm iteratively computes a 
 tile  that covers the largest number of still uncovered $1$s of all the tiles in $I$, until all
$1$s in $I$ are covered, implementing thus the well-known greedy set cover algorithm.
%RR: \citep{Hro:AHP}. 
It follows from Remark \ref{rem:MTP} that Tiling provides a from-below
factorization of $I$.

%\paragraph{\Asso} 
\emph{\Asso} 
\citep{MiMiGiDaMa:TDBP}, probably the most discussed
BMF algorithm in the data mining literature, 
first computes an $m\times m$ Boolean matrix $A$ in which
$A_{ij}=1$ if the confidence of rule $\{i\}\Rightarrow \{j\}$ exceeds
a parameter $\tau$. The rows of $A$ are then used as candidates
for the rows of the factor-attribute (i.e., basis vector) matrix.
The actual rows are selected using a greedy approach using 
function $\textit{cover}$ that rewards with weight $w^+$ the decrease
of error $E_u$ and penalizes with weight $w^-$ the increase 
of $E_o$ that is due to a given row of $A$.
%M: mezera za makrem, opraveno
%R4: uprava vety
\Asso\ is designed to solve the Discrete Basis Problem (Section \ref{sec:p}) and commits both 
types of errors, $E_u$ and $E_o$. 
It thus provides general factorizations which need not be
%M: 2 preklepym beow factoriations
from-below factorizations of $I$.

\emph{\Hyper} 
\cite{Xiea:Stdoh} 
produces from a Boolean matrix $I$ a set $\mathcal{F}$ of rectangles (called hyperrectangles by the authors) in $I$
that provide an exact decomposition of $I$. \Hyper\  attempts to find $\mathcal{F}$ with the smallest minimal cost. 
For cost, \Hyper\ employs the minimal description length (MDL) principle in 
that for a rectangle 
$\tu{C,D}$ ($C$ and $D$ being sets of rows and columns, respectively), its cost is
$\mathit{cost}(C,D)=|C|+|D|$ and the cost of  set $\mathcal{F}$ of rectangles is
the sum of the costs of the rectangles in $\mathcal{F}$. Analogously as in case of DBP and AFP,
constraints on error and the number of factors are %M: preklep considered
considered in modifications of the decomposition problem.
Hence, instead of the number of 
%R4: factor -> factors
factors themselves, the primary concern for \Hyper\ is the description
length of the factors. This makes the problem different from DBP and AFP. However, the fact that a small number of 
rectangles (factors) tends to have small description length and the claims in \cite{Xiea:Stdoh} that \Hyper\ provides a good
summarization of the data makes \Hyper\ 
%R4: vynechano certainly
 a relevant decomposition algorithm.

%RRR: popis upraven
In particular, \Hyper\ generates the factor rectangles by first computing for a given support $\alpha$ supplied by a user the set 
${F}_\alpha$ of all
$\alpha$-frequent itemsets from $I$. Then, \Hyper\ generates the factor rectangles from the set $\mathcal{C}_\alpha$  of 
rectangles corresponding to the itemsets in the set ${F}_\alpha$ enriched by all the 
singleton itemsets. 
For each rectangle in $\mathcal{C}_\alpha$, \Hyper\ finds the minimum cost rectangle by adding rows from a list of rows sorted by 
their coverage of the yet uncovered data. The rectangle with minimum cost over all the itemsets is then added
to the set $\mathcal{F}$ of factors.
Even though, as emphasized by the authors, \Hyper\ runs in time polynomial w.r.t. $\mathcal{C}_\alpha$,
its time is not polynomial w.r.t. to the input size because the size of $\mathcal{C}_\alpha$ may be exponential w.r.t.
to the input size---an important fact not mentioned by the authors.

%\paragraph{\GreConD}
\emph{\GreConD} \citep[Algorithm 2]{BeVy:Dof} %(denoted Algorithm 2 in that paper)  
performs a particular greedy search  ``on demand'' for formal
concepts of the input matrix $I$ and utilizes these concepts
to produce matrices $A_{\mathcal{F}}$ and $B_{\mathcal{F}}$.
This search improves the idea of the basic set cover algorithm in that
it avoids the necessity to compute all formal concepts of $I$, resulting
in orders of magnitude time saving while retaining the quality of decomposition.
Note also that \citep[Algorithm 1]{BeVy:Dof} implements the basic set cover algorithm,
thus proceeds essentially as the \Tiling\ algorithm but is considerably more time efficient
because the maximal rectangles are computed in advance.
\GreConD\ is a from-below decomposition algorithm, designed to compute
exact decompositions. When stopped after computing the first $k$ factors
or after the error $E$ does not exceed $\varepsilon$, \GreConD\ provides approximate solutions
to both DBP and AFP (Section \ref{sec:p}).
 
\emph{\PaNDa} (Patterns in Noisy Datasets) \cite{LuOrPe:Mtpbdn} is designed to 
solve a modification of DBP which consists in employing the MDL %M: preklep principle
principle. In particular,
for a given $I$ and $k$, the problem is to extract from $I$ a set $\mathcal{F}$ of $k$ patterns 
(pairs $\tu{C,D}$ of sets of rows and columns) minimizing the cost of $\mathcal{F}$.
The cost of $\mathcal{F}$ is the sum of description complexity of $\mathcal{F}$, defined
the same way as for \Hyper\ (see above), and the error $E(I,A_\mathcal{F}\circ B_\mathcal{F})$.
Each pattern $\tu{C,D}$ in $\mathcal{F}$ is computed by first computing its core and then 
extending the core to $\tu{C,D}$. A core is, in fact, a rectangle contained in $I$ and is 
computed by adding columns from a sorted list (sorting by several criteria is proposed).
Extension to $\tu{C,D}$ is performed by adding further columns and rows to a core
until such %MM: preklep additon -> addition
addition does not help in minimizing the cost.
The authors also propose simple randomization to overcome the drawback of 
selecting the columns from a fixed, sorted list.

%------------------------------------------------------------------
%\subsection{Datasets}

%\paragraph{Synthetic data}
\paragraph{Synthetic data}
We use the datasets described in Table \ref{tab:sets}.
Every Set $i$ consists of 1000 datasets of the given size,
obtained as Boolean products of matrices $A$ and $B$ that are generated
randomly with prescribed densities dens~$A$ and dens~$B$ and the corresponding dimensions 
(for instance, in Set 1, $A$ and $B$ are $300\times 20$ and $20\times 100$ matrices).
The average density of the matrices in Set 1, \dots, Set 6 was 
0.2, %set 1  20\%, 
0.05, %set2    5\%    nejřidší dataset
0.1, %set3    10\%
0.2, %set4    20\%
0.5, %set5    50\%    nejhustší dataset
0.3, %set6    30\%
respectively.
The data does not include noise, which
 is considered in Section \ref{sec:noise}.
%A given noise was then added to the resulting matrix $A\circ B$.
%That is, we use a similar scenario as in \cite{MiMiGiDaMa:TDBP}.

%Nakonec jsem to udelal stejne jako to ma Miettinen, vysledny dataset je vytvoren pronasobenim dvou matic A a B. Basis oznacuje vnitrni dimenzi matic dens A, respektive dens B oznacuje hustotu jednicek v matici A a B (rovnomerna hustota, pres radky respektive sloupce). Temito hustotami se da ridit celkova hustota vysledne matice (omezene). Noise oznacuje nahodne pridany sum. Nizke hodnoty basis jsou proto, aby v dalsich testech, ktere se provadeji pro $k=5, 10, 15, 20, 25, 30$ byly rozumne vysledky.

%M_: 3.12. v teto tabulce jsou finalni parametry datasetu, hustsi matice jsou kolem 18%, ridsi kolem 10%, nejridsi kolem 5%, chtelo by to detailne poexperimentovat jak parametry ovlivnuji hustotu vysledne matice, popripade to ykusit nejak teoreticky. Velikost zakladny je v duchu Mietinoveho scenare experimentu.

\begin{table}[h]
\centering
\begin{tabular}{lccccc}
\hline
dataset & size  & $k$ & dens $A$ & dens $B$ \\%& noise\\
\hline

Set 1 & 300$\times$100 		& 20  & 0.10 & 0.10 \\%& $<5\%$\\
Set 2 & 500$\times$250 		& 20  & 0.05 & 0.05 \\%& $<5\%$\\ 
Set 3 & 500$\times$250 		& 20  & 0.10 & 0.05 \\%& $<5\%$\\ 
Set 4 & 500$\times$250 		& 30  & 0.12 & 0.12 \\%& $<5\%$\\ 
Set 5 & 1000$\times$500 	& 50  & 0.10 & 0.10 \\%& $<5\%$\\ 
Set 6 & 10000$\times$1000 	& 50  & 0.10 & 0.10 \\%& $<5\%$\\ 

\hline
\end{tabular}
\caption{Synthetic datasets $I = A \circ B$}
\label{tab:sets}
\end{table}

%Parametry pro experimenty se šumem byly: hustota matice A 0.1, hustota matice B 0.1 (typově stejné jako Set1), 
%průměrná hustota kolem 20\%, zkoušel jsem samozřejmě i jiné, ale výsledky jsou srovnatelné.

%průměrné hustoty náhodných datasetů:
%set1    20\%
%set2    5\%    nejřidší dataset
%set3    10\%
%set4    20\%
%set5    50\%    nejhustší dataset
%set6    30\%

%napadá mě, že Asso se chová špatně na méně hustých a na více hustých datasetech a "rozumněji" na středně hustých datasetech, když tak procházím další výsledky experimentů s jinými konfiguracemi, tak to podporuje tohle tvrzení.
%
%RADIM: ZKUS TEDY JESTE PRO MATICE JINYCH ROZMERU  s hustotou kolem 10\% a 50\%, at vidime, jestli to je tim,
%asi nejlepe matice s rozmery a basis hodnotou jako pro Set 1, 3, 4, 6, at to muzeme srovnat.

%oproti Mietienovi mame vetsi hodnoty basis. Jeho jsou prilis male a jako takove prilis umele. Je jasne, ze tyhle hodnoty zvolil predevsim kvuli rychlosti, ty masivni datasety se jinak pocitaji velice dlouho.
%Co se tyka hustot matice, je to velice ruznorode. Pomerne spatne se predikuje jak bude matice husta v zavislosti na 4 parametrech. ALe obecne plati ze nizke hodnoty dens a basis znamenaji nizsi hustou.

%------------------------------------------------------------------
%\subsection{Real data}

%\paragraph{Real data}
%R4: data.
\emph{Real data.}
%M_: pozadavky od Miettinena: * DNA amplification: attached. You have to cite MYLLYKANGAS, S., HIMBERG, J., BĂ–HLING, T., NAGY, B., HOLLMĂ‰N, J., AND KNUUTILA, S. 2006. DNA copy number amplification profiling of human neoplasms. Oncogene 25, 55, 7324â€“7332.
% * Paleo: attached. You must cite FORTELIUS, M. ET AL. 2003. Neogene of the old world database of fossil mammals (NOW). http://www.helsinki.fi/science/now/.   This data is based on NOW public release 030717 M. Fortelius, Neogene of the Old World Database of Fossil Mammals (NOW â€™05), http://www.helsinki.fi/science/now/, 2005.
% takhle to ma Miettinen ve svem clanku a vzdy prislusne citace \footnote{NOW public release 030717, available from http://www.helsinki.fi/science/now/.}
We used the datasets 
Mushroom \cite{BaLi:UCI}, Chess \cite{BaLi:UCI}, 
%M: url adresy vysazeny \tt
DBLP\footnote{{\tt http://www.informatik.uni-trier.de/\raisebox{-2.5pt}{${}^\sim$}ley/db/}},
Paleo\footnote{NOW public release 030717, available from {\tt http://www.helsinki.fi/science/now/}.},  
Tic-tac-toe \cite{BaLi:UCI},   and 
DNA \cite{Myea:DNA}
with characteristics shown
in Table \ref{tab:sprd}, which are
 well known and  used in the literature on BMF.

%Asi bych zaradil Tic-tac-toe az jako posledni dataset (jeste se pocita DNA dataset, zitra - stred by mel byt hotov, takze pokud bude misto muzeme mit 6 realnych datasetu). Mozna bychom mohli zminit ze vsechny prvky jsou essential a ze i presto se algoritmus nechova nejak hazardne, ale stale pocita rozumnou faktorizaci.

%------------------------------------------------------------------
%\subsection{Observed characteristics}

%\paragraph{Reduction in size: $I$ vs. $\ess(I)$}
\subsection{Reduction in Number of Entries with $1$: $I$ vs. $\ess(I)$}
In view of the properties of essential parts and the strategy of %M: GreEss nebylo sazeno makrem
\GreEss, 
it is clearly significant to observe the ratio $||\ess(I)||/||I||$
of the number of entries containing $1$ in $\ess(I)$ to the corresponding number
for $I$. These characteristics are provided in
Table \ref{tab:spe} (synthetic data) and Table \ref{tab:sprd} (real data).
As one can see, the reduction in the number of $1$s in the essential parts is significant. 
%the characteristics (their average values in case of %M_: preklep, synthetic
%synthetic datasets) regarding
%essential parts, namely the numbers $||I||$ and $||\ess(I)||$ of $1$s in the original
%data and its essential part, and the ratio $||\ess(I)||/||I||$. These are clearly is important
%numbers in view of our results and the algorithm.

%Statisticke vlastnosti nahodnych datasetu. Prvni sloupec oznacuje prumerny pocet jednicek. 
%Druhy prumerny pocet ess policek. 
\begin{table}%[h]
\centering
\begin{tabular}{lccc}
\hline
dataset & avg $||I||$ & avg $||\ess(I)||$ & avg $||\ess(I)||/||I||$ %& std \\%& var \\
\\
\hline

Set 1 & 5039   & 2764  & 0.549 \\%& 0.025 \\%& 0.006\\
Set 2 & 11966  & 6412  & 0.536 \\%& 0.020 \\%& 0.004\\
Set 3 & 22841  & 5207  & 0.228 \\%& 0.013 \\%& 0.001\\
Set 4 & 44027  & 8894  & 0.202 \\%& 0.023 \\%& 0.005\\
Set 5 & 195990 & 34005 & 0.174 \\%& 0.014 \\%& 0.002\\
Set 6 & 3907433 & 47359 & 0.012 \\%& 0.024  \\%& 0.006 \\

\hline
\end{tabular}
\caption{Essential part of $I$ (synthetic data)}
\label{tab:spe}
\end{table}

%M: v teto tabulce je zakomentovany tic_tac_toe, ale dale jej pouzivame
%R4: zviditelneno
\begin{table}%[h]
\centering
\begin{tabular}{lcccc}
\hline
dataset & size & $||I||$ & $||\ess(I)||$ & $||\ess(I)||/||I||$ \\%& dupr & dupc & redr & reds \\
\hline

Mushroom & 8124$\times$119 & 186852 & 82965 & 0.444\\% & 0 & 6 & 0 & 13\\
DBLP & 19$\times$6980& 40637 & 1601 & 0.039 \\%0 & 6090 & 0 & 626\\
Paleo & 501$\times$139 & 3537 & 1906 & 0.539 \\%30 & 0 & 79 & 0\\
Chess & 3196$\times$76 & 118252 & 71296 & 0.603 \\%0 & 0 & 0 & 4\\
DNA & 4590$\times$392 & 26527 & 1685 & 0.064 \\%& 21 & 617 & 8 \\
Tic-tac-toe & 958$\times$29 & 9580 & 9580 & 1 \\%0 & 0 & 0& 1 \\

\hline
\end{tabular}
\caption{Essential part of $I$ (real data)}
\label{tab:sprd}
\end{table}

%MARTIN:
%3) pomery $B_ess(I)/B(I)$
%tady je tabulka s výsledky, už si vzpomínám proč jsme ji nezařazovali, protože ty výsledky jsou poměrně nepříznivé, ale počítám teď výsledky, které ukazují distribuci počtu pokrytých ess políček mezi koncepty, to je mnohem zajímavější údaj, ale musím zpracovat výsledky a nakreslit histogramy. Tohle bude chtít větší komentář, takže bude asi lepší když tyto experimenty přinesu v úterý a okomentuju je slovně.
%
%
%\begin{table}[h!]
%\begin{center}
%\begin{tabular}{lccc}
%\hline
%Dataset & Počet ne Ess  & $||\mathcal{B}(I)||$ & $||\mathcal{B}_{\ess}(I) /
%\mathcal{B}(I)||$ \\
%\hline
%Mushroom & 29 & 238710 & 0.999\\
%DBLP & 2476 & 2495 & 0.007\\
%Paleo & 344 & 10225 & 0.966\\
%DNA & 3331 & 4483 & 0.257\\
%Tic Tac Toe & 2 &59505 & 0.999\\
%\hline
%%pocet ne Ess konceptu / pocet konceptu
%%Mushroom & 29/238710\\
%%DBLP & 2476/2495\\
%%Paleo & 344/10225\\
%%DNA & 3331/4483\\
%%Tic Tac Toe & 2/59505\\
%\end{tabular}
%\end{center}
%\end{table}
%
%počtem ne Ess je myšleno počet konceptů, které nekryjí žádné Ess políčko.

\subsection{Performance of Algorithms}
\label{sec:pa}

Arguably, the most important aspect in evaluating the performance of BMF algorithms
is the quality of decompositions delivered by the algorithms. 
The existing literature provides numerous
evidence demonstrating the the factors in Boolean data are meaningful and interesting
from data analysis point of view, hence we focus on comparison in terms of quantitative
criteria described below.
Recall that \GreEss\ is designed for the AFP problem.
However, we take into account both views reflecting the goals of DBP and AFP, see Section \ref{sec:nbn},
and require that a good factorization algorithm computes a decomposition (or approximate decomposition)
of the input matrix $I$ using a reasonably small number of factors in such a way that the 
first factors have a reasonably good coverage, i.e. explain a large portion of data. 
For this purpose we compare the factorization algorithms using their coverage quality introduced below.
In addition to the quality of decompositions, another issue is the time complexity of the  algorithms.
We consider this issue in the next paragraph.

\paragraph{Time complexity}
Time complexity is not as critical a constraint as the quality of delivered decompositions, though clearly,
time complexity should not be prohibitive.
Since %M: preklep time
time complexity is not our primary concern, we postpone its detailed analysis, including 
the analysis of ``bad cases'' for the particular algorithms,
to future work and provide just basic observations.
We implemented all algorithms in MATLAB with critical parts written C a compiled to binary MEX files.
We employed about the same level of optimization to make the time demands of the algorithms
comparable. 
For information about the time complexity of \Tiling, \Asso, \Hyper, \GreConD, and \PaNDa\
we refer the above-mentioned papers.
It is easy to see that the time complexity of \GreEss\  is polynomial in terms of the number of 
rows and columns of the input matrix $I$ and, in fact, is of the same order as that of \GreConD.
This is because in \GreEss,  both \ComputeIntervals\ and the subsequent computing of 
$\mathcal{F}$ from $\mathcal{G}$ has asymptotically the same time complexity as \GreConD\
for the following reasons.
In \ComputeIntervals, computing  $\ess(I)$ is simple and the subsequent computing of $\mathcal{G}$ proceeds
by extending attribute concepts in $\ess(I)$ (lines 2--11), which has the complexity of the same order as the one
of \GreConD. In addition, the greedy search in computing $\mathcal{F}$ (lines 3--22 in \GreEss)
proceeds by extending similarly the attribute concepts in the context $J$. Such extension 
is, in the worst case, of the same order as the time %M: preklep complexity
complexity of  \GreEss, again.

\Tiling, \GreConD, and \GreEss\ run without parameters to be set. For the other algorithms,
we followed the recommendations by the authors. We, however, experimented
with setting the parameters and chose them individually, with
%R4: the
the best performance for 
%R4: a -> every
every given dataset.
%The dependence on parameters may be considered both advantage 
%(adaptivity) and disadvantage (burden on user's side).
In particular, \Asso\ requires us to set $\tau$, and (one of) $w^+$ and $w^-$ (see above).
In most cases, the best choice was $0.8\leq \tau < 1$ and $w^+,w^-\in\{1,2,3\}$.
%The dependence of \Asso on parameters may be considered both advantage 
%(adaptivity) and disadvantage (burden on user's side). We obtained theoretical results
%on \Asso's ability to compute exact decompositions but omit them due to limited scope.
For \Hyper, we set the support parameter $\alpha\geq 0.3$ and used closed $\alpha$-frequent
itemsets (see above). For \PaNDa, we used attribute sorting by frequency and the randomization
described in \cite{LuOrPe:Mtpbdn}.
These settings are used in the evaluation below.

The overall fastest algorithm is \GreConD. This algorithm does not perform any data preprocessing
and utilizes a very fast heuristic for computing the factors.
Second to \GreConD\ is \GreEss\ which was about $2\times$ slower.
%The reason is that \GreEss\ first computes the intervals
Third to \GreConD\ is \Asso\ which was about $3$--$4\times$ slower than \GreConD.
Fourth and fifth in terms of time demand are \Hyper\ and \PaNDa\ which are about $5\times$
slower than \GreConD.
However, the time consumed by \Hyper\ depends on the size of the set $\mathcal{C}_\alpha$ of frequent 
itemsets, and hence depends on $\alpha$ (see above). As is well-known, the number of frequent
as well as closed frequent itemsets may be exponential in the number of items. 
As a result, the worst case time complexity of \Hyper\ is exponential in the number of attributes,
as mentioned above.
\Tiling\ is the slowest of all the compared algorithms.  On average, it was about $400\times$ slower than
\GreConD.
This is because in selecting each tile, \Tiling\ browses the set of all maximal tiles which is
usually very large and may be exponential in terms of the minimum of the number of objects
and attributes.
Note also  that according to \citep{BeVy:Dof} and our experience, \GreConD\ implemented in 
C factorizes Mushroom dataset in the order of seconds on an ordinary PC.

\paragraph{Quality of decompositions}
To assess the quality of decompositions, we employed the following function
of $A\in\{0,1\}^{n\times l}$ and $B\in\{0,1\}^{l\times m}$ 
representing the \emph{coverage quality} of the first $l$ factors delivered by the particular algorithm:
\begin{equation}\label{eqn:cq}
%  c=1-\frac{E(I,A\circ B)}{||I||}.
  c=1-{E(I,A\circ B)}/{||I||}.
\end{equation} 
Similar functions are used in \citep{BeVy:Dof,GeGoMi:Td}.
%RRR: pridano
We observe the values of $c$ for $l=0,\dots,k$, where $k$ is the number of factors
delivered by a particular algorithm.
Clearly, for $l=0$ (no factors added, $A$ and $B$ are ``empty'') we have $c=0$. 
It is desirable that for $l=k$ we have $I=A\circ B$, i.e. the data is fully explained
by all the $k$ factors computed, in which case $c=1$. 
For a good factorization algorithm,
%In accordance with the beginning of Section \ref{sec:pa}, 
$c$ should be increasing in $l$  and should have relatively large values even for small $l$, corresponding
to the requirements that as we add factors, the error decreases, and that
the first factors explain a large portion of data, respectively.

%RRR: zapoznamkovano
%For from-below decomposition algorithms, such as
%\GreEss, \GreConD, and \Tiling, we clearly have 
%%$c=1-\frac{E_u(I,A\circ B)}{||I||}$.
%$c=1-{E_u(I,A\circ B)}/{||I||}$.
%For \Asso, which is not a from-below decomposition algorithm,
%we observe $c$ but also %$c_u=1-\frac{E_u(I,A\circ B)}{||I||}$  
%$c_u=1-{E_u(I,A\circ B)}/{||I||}$  
%and
%%$e_o=\frac{E_o(I,A\circ B)}{||I||}$
%$e_o={E_o(I,A\circ B)}/{||I||}$
% (column \Asso: $c_u-e_o$) . Clearly,
%$c=c_u-e_o$. This provides us with information about both types
%of error %M_: preklep, committed
%committed and makes it easy to compare the parts due to ``uncovered'' $1$s
%for \Asso ($c_u$) with those of \GreEss, \GreConD, and \Tiling ($c$).

The results for synthetic and real data are shown in Fig.~\ref{fig:cs}, Table~\ref{tab:qs}, 
and
Table~\ref{tab:qr}.
%, and Fig.~\ref{fig:cr}.  
The results for synthetic data are obtained
as averages over the 1000 datasets comprised by each Set $i$.
In Table~\ref{tab:qs}, the performance of the algorithms is represented
by the coverage quality  of the sets of the first $k$ factors computed by the algorithm for selected $k$.
In every row, the best performance is shown in bold.
In Fig.~\ref{fig:cs}, we display the curves of the coverage quality as a function of 
$k$.
We do not display results for other data and other parameters
of synthetic data; the presented results are, however, representative w.r.t. assessment of
quality of decompositions of the six algorithms compared.
In Table ~\ref{tab:qr}, we display the number of factors needed to cover 
$25\%$,  $50\%$, $75\%$, and $100\%$ of the data for the six real
%R4: dataset -> datasets
 datasets.

%M2: mezery mezi obrazky jsem musel zvetsit byly prekrite popisky o 2pt
\begin{figure}%[h]%[htp]
\centering
\subfloat[Set 1]
{\label{fig:aproxset1}\includegraphics[scale=0.45]{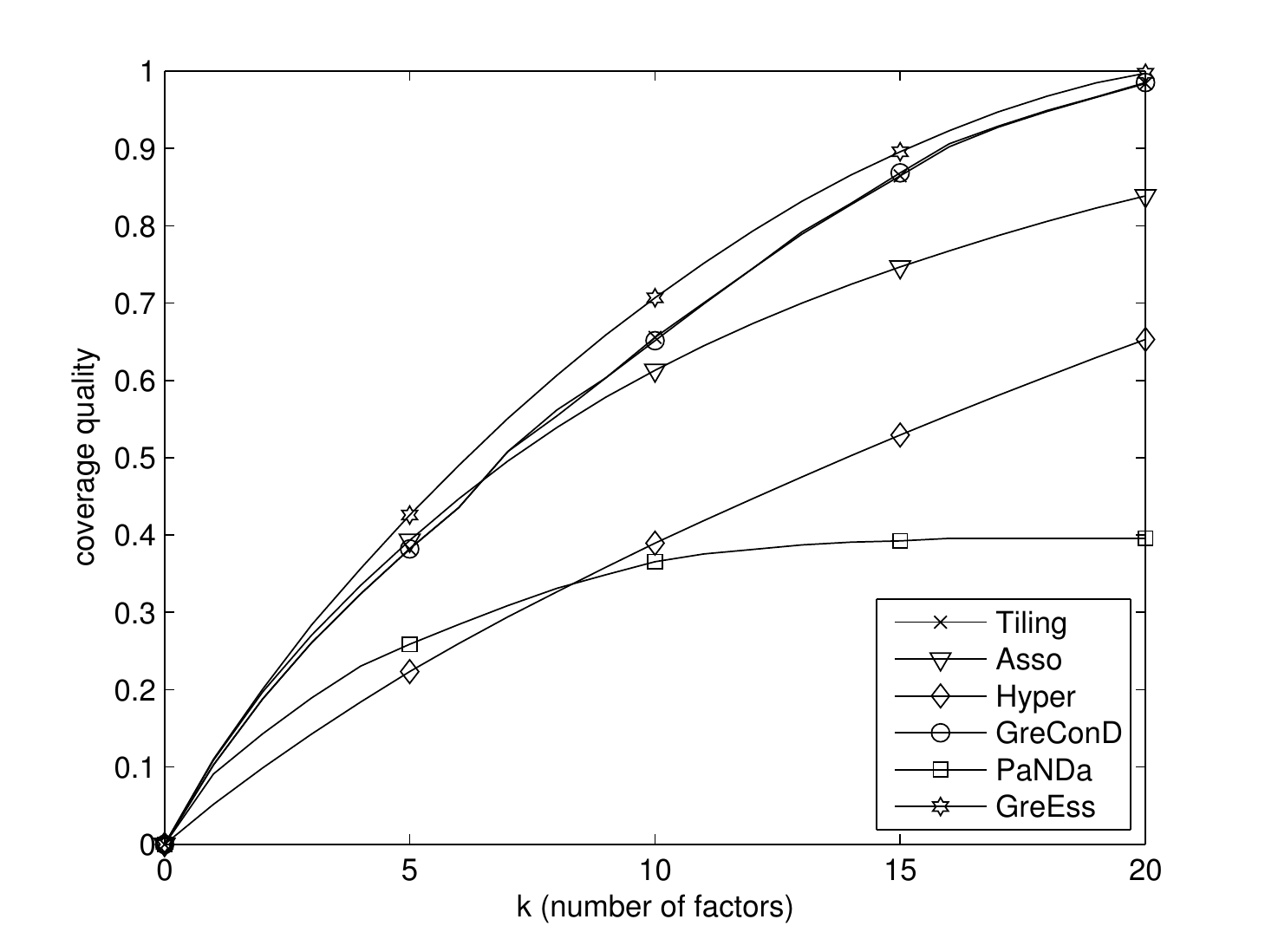}}
\hspace{0pt}
\subfloat[Set 2]
{\label{fig:aproxset2}\includegraphics[scale=0.45]{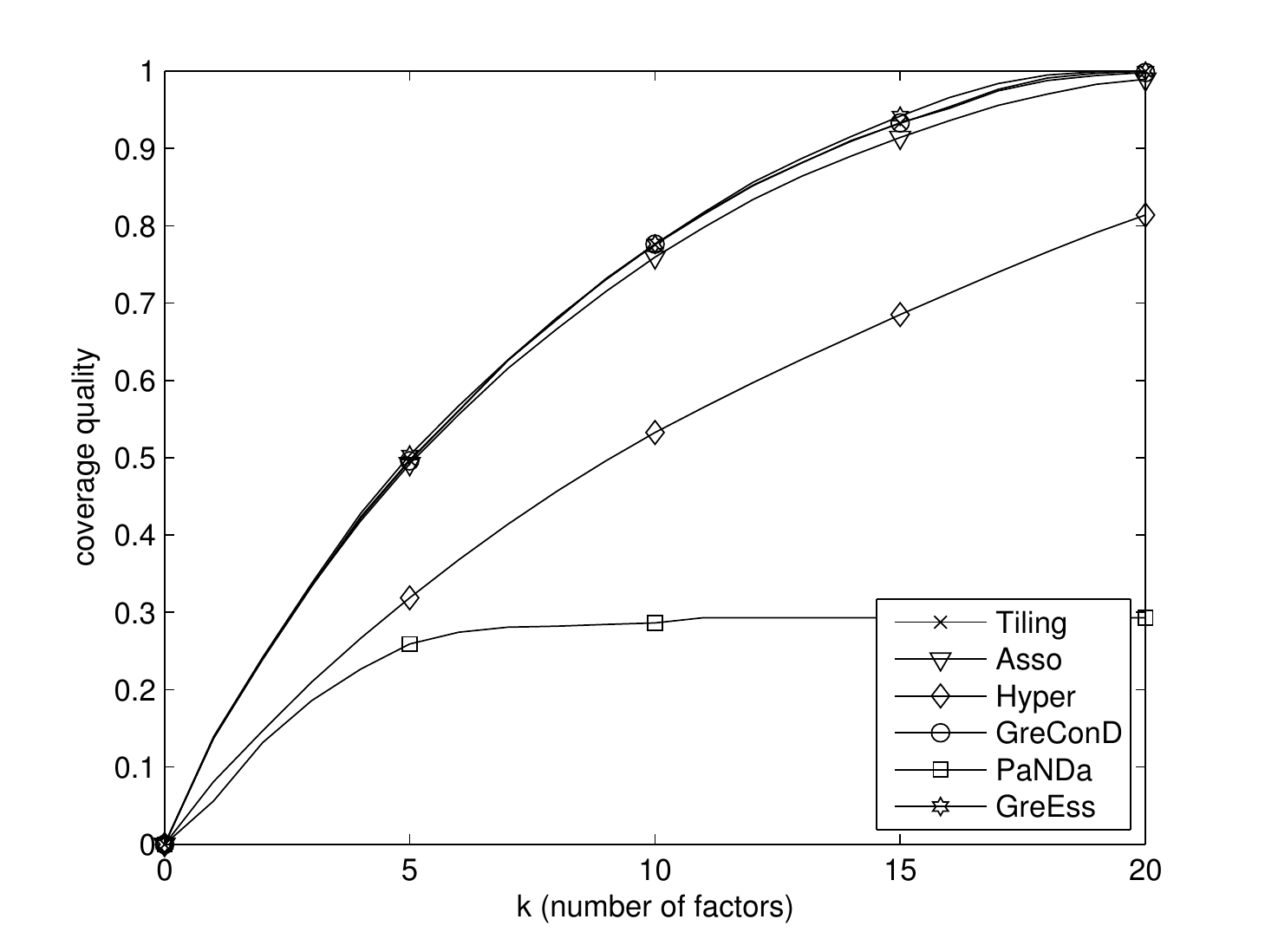}}
\vspace{-2pt}
\subfloat[Set 3]
{\label{fig:aproxset3}\includegraphics[scale=0.45]{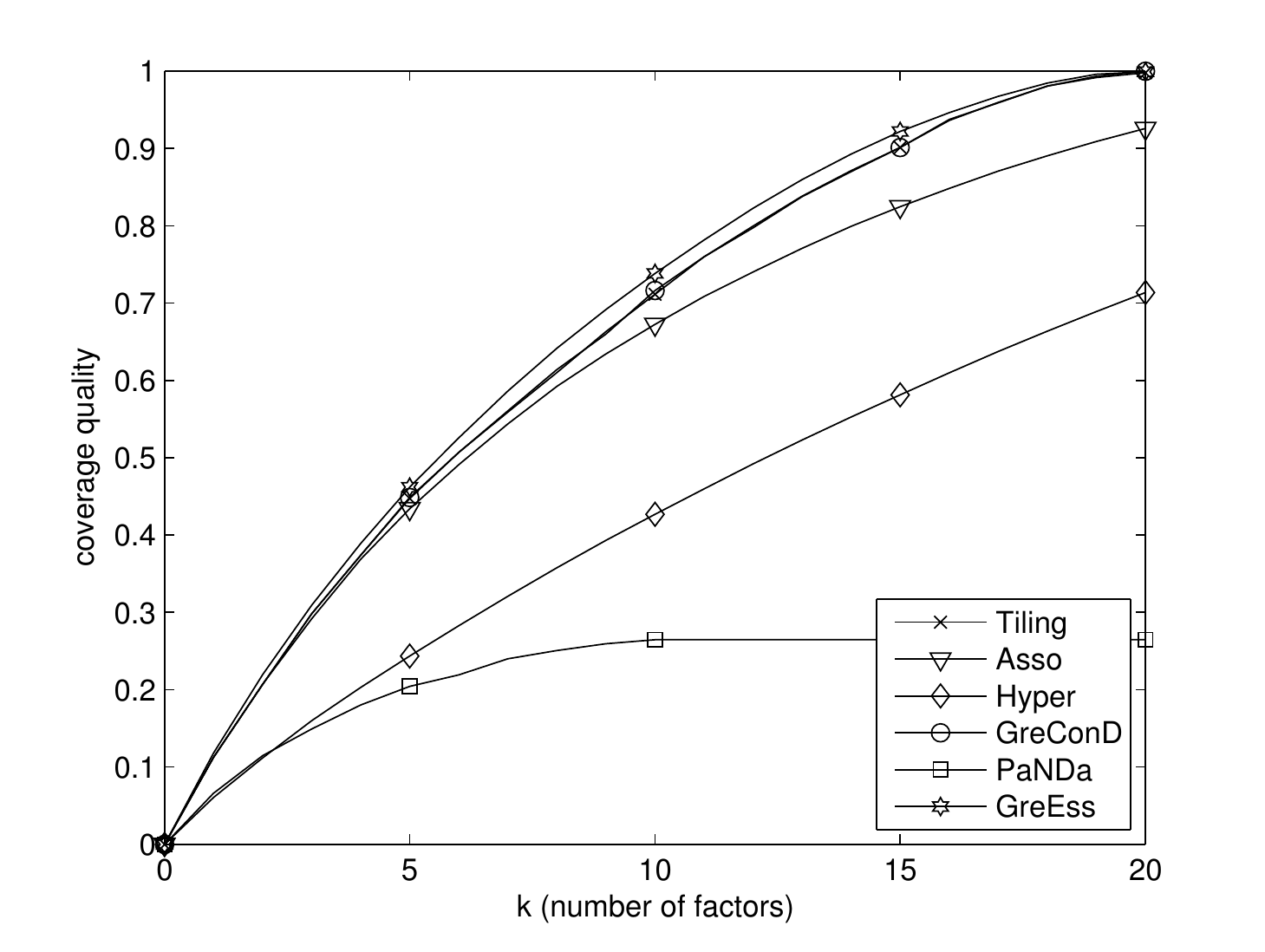}}
\hspace{0pt}
\subfloat[Set 4]
{\label{fig:aproxset4}\includegraphics[scale=0.45]{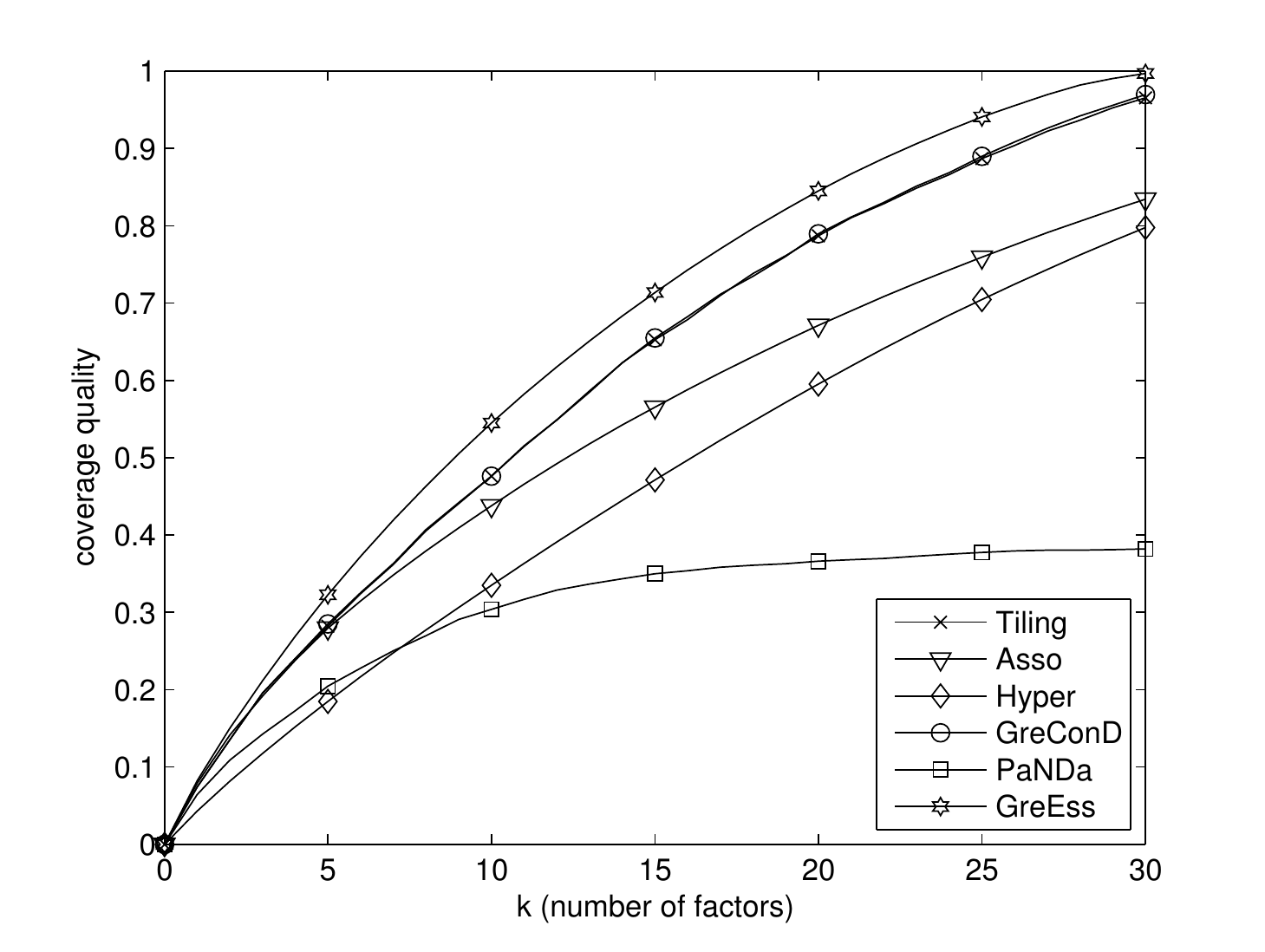}}
\vspace{-2pt}
\subfloat[Set 5]
{\label{fig:aproxset5}\includegraphics[scale=0.45]{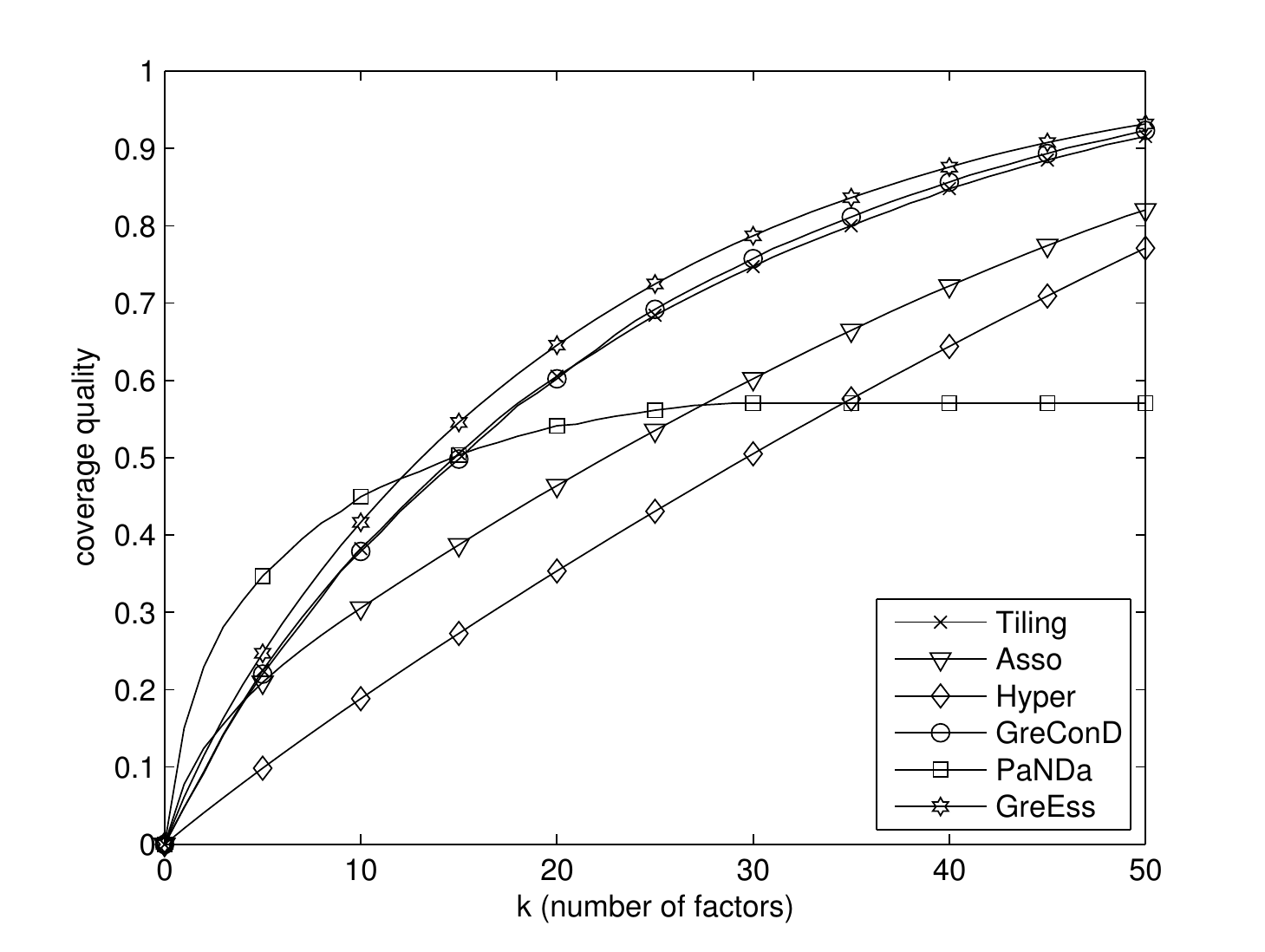}}
\hspace{0pt}
\subfloat[Set 6]
{\label{fig:aproxset6}\includegraphics[scale=0.45]{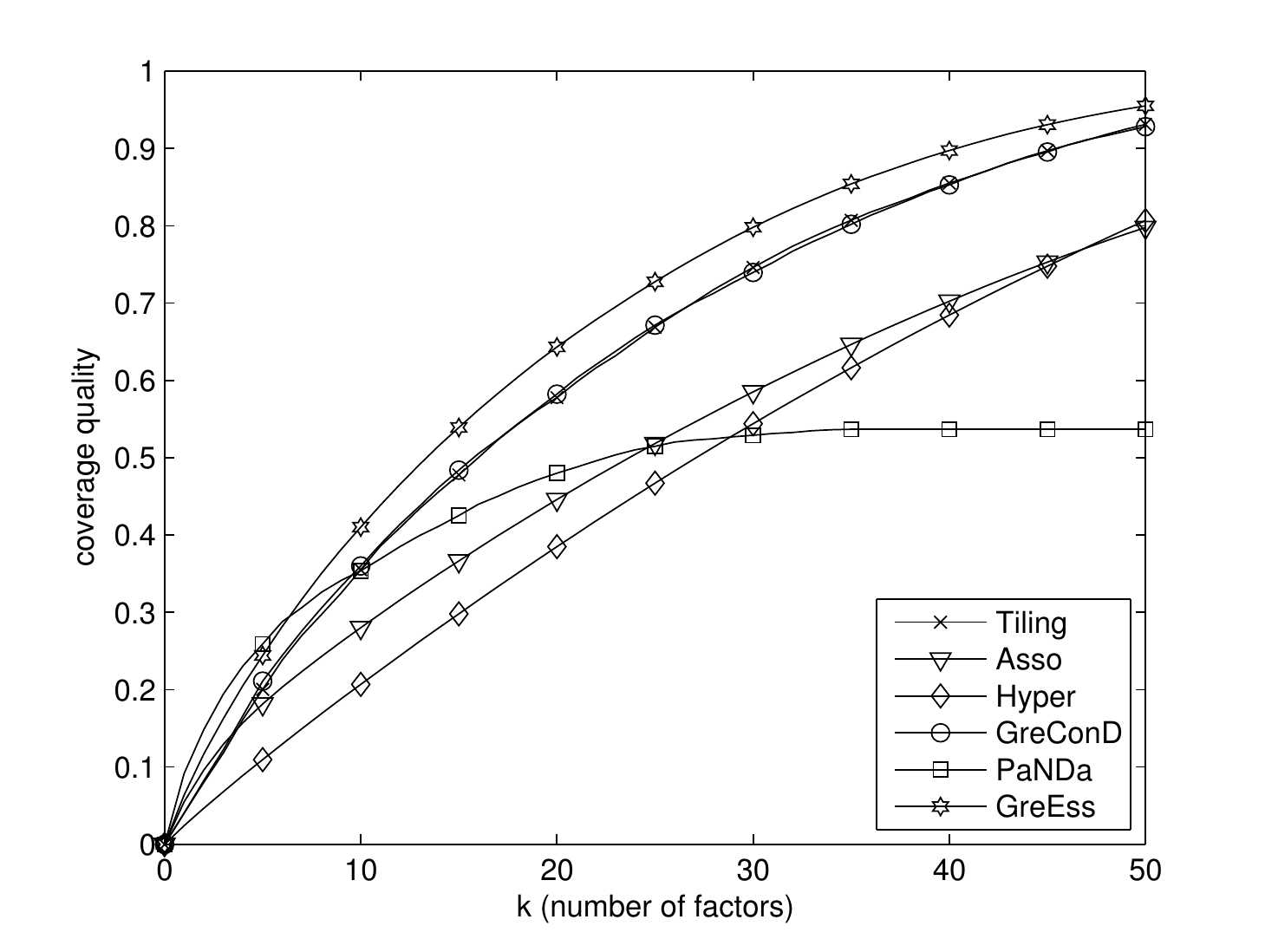}}
\caption{Coverage quality of the first $k$ factors (synthetic data).}
\label{fig:cs}
\end{figure}

%The graphs let us see the  global picture but do not show well the numbers of factors
%needed for exacts decomposition, i.e. the approximations of $\mathrm{rank}_\mathrm{B}(I)$
%%of the input data 
%computed by the algorithms. These numbers are better seen from the tables.

%[Poznamka k parametrum \Asso: Aby \Asso nebylo prilis znevyhodneno, bylo u realnych datasetu zkouseno vice variant nastaveni $\tau = 0.95, 0.9, 0.85$ a nastaveni vah $w^+ =1, 2, 3 a w^- 1, 2, 3$. Neni to sice uplne nejkorektnejsi, spravne by se tyto hodnoty mely hledat pulenim intervalu, ale v zasade tohle jsou nejrozumnejsi hodnoty pri kterych to dava dobre vysledky (prakticke zkusenosti).
%hodnota $k$ pak byla nastavena na vetsi hodnotu nez je pocet atributu daneho kontextu (jen kvuli grafum).]
%
%For the values of the parameters recommended in \citep{MiMiGiDaMa:TDBP},
%\Asso as a rule is not capable of computing an exact factorization of 
%the input matrix. For the lack of space, we only mention here that 
%one can prove that \Asso is capable of finding exact decompositions
%only for extreme setting of its parameters (e.g. for $\tau=1$ and 
%$w^->w^+(m-1)$) in which case, however, the quality of decompositions
%delivered is poor.

\begin{table}%[h]
\centering
\begin{tabular}{ccccccccc}
\hline
%M2: zarovnal jsem na stred popis atributu
& & \multicolumn{6}{l}{coverage $c$ of the first $k$ factors}\\
dataset & $k$ & \Tiling & \Asso & \Hyper & \GreConD & \PaNDa & \GreEss \\
\hline
Set 1 
& 5    &0.3820    &0.3929    &0.2234    &0.3820    &0.2586    &\bf 0.4260 \\
&10    &0.6557    &0.6131    &0.3894    &0.6512    &0.3654    &\bf 0.7068 \\
&15    &0.8645    &0.7467    &0.5294    &0.8686    &0.3925    &\bf 0.8957 \\
&20    &0.9839    &0.8387    &0.6529    &0.9852    &0.3958    &\bf 0.9971 \\
&25    &\bf 1.0000    &0.9049    &0.7602    &\bf 1.0000    &0.3958    &\bf 1.0000 \\
\hline

Set 2 
& 5    &0.4970    &0.4919    &0.3187    &0.4961    &0.2591    &\bf 0.5035 \\
&10    &0.7755    &0.7594    &0.5326    &\bf 0.7764    &0.2864    &\bf 0.7764 \\
&15    &0.9326    &0.9141    &0.6850    &0.9330    &0.2933    &\bf 0.9418 \\
&20    &\bf 1.0000    &0.9894    &0.8139    &0.9982    &0.2933    &\bf 1.0000 \\
&25    &\bf 1.0000    &0.9977    &0.9090    &\bf 1.0000    &0.2933    &\bf 1.0000 \\
\hline

Set 3 
& 5    &0.4471    &0.4341    &0.2435    &0.4485    &0.2041    &\bf 0.4620 \\
&10    &0.7114    &0.6725    &0.4268    &0.7163    &0.2647    &\bf 0.7384 \\
&15    &0.9011    &0.8246    &0.5814    &0.9011    &0.2647    &\bf 0.9219 \\
&20    &0.9980    &0.9259    &0.7136    &\bf 1.0000    &0.2647    &0.9991 \\
&25    &\bf 1.0000    &0.9812    &0.8176    &\bf 1.0000    &0.2647    &\bf 1.0000 \\
\hline    

Set 4 
& 5    &0.2822    &0.2794    &0.1849    &0.2851    &0.2047    &\bf 0.3228 \\
&10    &0.4760    &0.4379    &0.3352    &0.4761    &0.3039    &\bf 0.5450 \\
&20    &0.7869    &0.6711    &0.5953    &0.7894    &0.3661    &\bf 0.8450 \\
&30    &0.9655    &0.8344    &0.7978    &0.9698    &0.3820    &\bf 0.9969 \\
&40    &\bf 1.0000    &0.9381    &0.9414    &\bf 1.0000    &0.3824    &\bf 1.0000 \\
\hline

Set 5 
& 5    &0.2251    &0.2095    &0.0984    &0.2203    &\bf 0.3471    &0.2471 \\
&15    &0.5053    &0.3873    &0.2729    &0.4983    &0.5034    &\bf 0.5455 \\
&30    &0.7471    &0.6021    &0.5050    &0.7575    &0.5706    &\bf 0.7871 \\
&50    &0.9154    &0.8206    &0.7712    &0.9234    &0.5706    &\bf 0.9319 \\
&60    &0.9577    &0.8867    &0.8830    &0.9652    &0.5706    &\bf 0.9666 \\
\hline

Set 6 
& 5    &0.2004    &0.1817    &0.1096    &0.2110    &\bf 0.2592    &0.2443 \\
&15    &0.4779    &0.3666    &0.2980    &0.4841    &0.4253    &\bf 0.5391 \\
&30    &0.7462    &0.5851    &0.5439    &0.7396    &0.5290    &\bf 0.7980 \\
&50    &0.9310    &0.7978    &0.8069    &0.9281    &0.5368    &\bf 0.9552 \\
&60    &0.9752    &0.8716    &0.9079    &0.9734    &0.5368    &\bf 1.0000 \\

\hline
\end{tabular}
\caption{Quality of decompositions (synthetic data).%Coverage quality of the first $k$ factors (synthetic data).
}
\label{tab:qs}
\end{table}

\begin{table}%[h]
\small
\centering
\begin{tabular}{cccccccc}
\hline
& coverage  & \multicolumn{6}{c}{number of factors needed for the prescribed coverage}\\
dataset & ($100c\%$) & \Tiling & \Asso & \Hyper & \GreConD & \PaNDa  & \GreEss \\
\hline

Mushroom  								
&	25\%     	&3	&2	&8	&3	&1	&2     \\	
&	50\%     	&7	&6	&19	&7	&NA        	&8     \\	
&	75\%     	&24	&36	&37	&24	&NA        	&26     \\	
&	100\%    	&119	&NA    	&122	&120	&NA        	&105    \\ 	
\hline								
								
DBLP 								
&	25\%     	&2	&2	&2	&2	&NA          	&2     \\	
&	50\%     	&5	&5	&5	&5	&NA         	&5     \\	
&	75\%     	&10	&10	&10	&11	&NA        	&10     \\	
&	100\%    	&21	&19	&19	&20	&NA        	&19    \\ 	
\hline								
								
Paleo 								
&	25\%     	&16	&16	&14	&16	&NA        	&15     \\	
&	50\%     	&39	&40	&38	&39	&NA          	&38     \\	
&	75\%     	&75	&76	&73	&76	&NA          	&73     \\	
&	100\%    	&151	&NA    	&139	&152	&NA        	&145    \\	
\hline								
								
Chess 								
&	25\%     	&2	&1	&9	&1	&1	&1     \\	
&	50\%     	&5	&2	&26	&4	&NA           	&6     \\	
&	75\%     	&16	&15	&39	&15	&NA         	&17     \\	
&	100\%    	&124	&NA    	&90	&124	&NA          	&113    \\	
\hline								
								
DNA 								
&	25\%     	&8	&6	&24	&8	&NA          	&13     \\	
&	50\%     	&32	&27	&67	&33	&NA         	&41     \\	
&	75\%     	&94	&80	&155	&96	&NA        	&105     \\	
&	100\%    	&489	&NA    	&392	&496	&NA      	&408    \\	
\hline								
								
Tic-tac-toe								
&	25\%     	&5	&6	&5	&5	&NA         	&5     \\	
&	50\%     	&12	&12	&11	&12	&NA        	&12     \\	
&	75\%     	&19	&19	&18	&19	&NA        	&19     \\	
&	100\%    	&31	&29	&29	&32	&NA        	&32    \\

\hline
\end{tabular}
\caption{Quality of decompositions (real data).}
\label{tab:qr}
\end{table}

All the algorithms  compute the factors for a given $I$
one after another. In case of \Tiling, \Hyper, \GreConD\ and \GreEss, this process is guaranteed 
to stop when an exact decomposition $I=A\circ B$ is found. 
With \Asso\ and \PaNDa, it often happens that an exact decomposition is not found and that the algorithm stops
with a relatively small coverage $c$ (i.e. large error $E(I,A\circ B)$), which is seen 
from the tables and graphs and is indicated by NA in Table~\ref{tab:qr}.
This is in particular true of \PaNDa.
This feature, which is a consequence of committing the error $E_o$ (cf. Observation \ref{thm:E}
and the discussion below), is a disadvantage of \Asso\ and \PaNDa\ when a large coverage is required.
On the other hand, \Asso\ tends to have a good coverage by the first couple of factors.
\Asso\ performs better on datasets which are sparse or dense compared to other datasets,
which can be observed on Set 2 and Set 5. 
\PaNDa\ tends to have a good coverage by the first couple of factors on dense datasets
which is seen in case of Set $5$.
\Hyper\ performs well with respect to the first quality criterion, namely the total number of factors
needed for an exact decomposition of the input matrix.
For the synthetic datasets, \Hyper\ is the fourth best, behind \Tiling, \GreConD, and \GreEss,
with \GreEss\ being the best one.
For the six real datasets, \Hyper\  is comparable to \GreEss\ in terms of the first quality criterion. 
%and actually provides the smallest number of factors.
However,
Tables~\ref{tab:qs} and \ref{tab:qr} and  the slowly-growing curves of coverage quality in Fig.~\ref{fig:cs}
reveal a significant drawback of \Hyper, namely a poor coverage by the
set of the first $k$ factors, even for a relatively large $k$.
The reason for this behavior is the following.
\Hyper\ includes in the set $\mathcal{C}_\alpha$ (see the above description of \Hyper)  not only
the rectangles corresponding to $\alpha$-frequent itemsets but also those corresponding to 
all the singleton itemsets. Including the singleton itemsets guarantees that an exact decomposition
of the input matrix is found when \Hyper\ computes them from $\mathcal{C}_\alpha$.
It turns out from the results, however, that the factors corresponding to the singleton items,
i.e. the rectangles induced by the columns of the input matrix are used very often. 
This causes a very low coverage by the sets of the first $k$ factors of \Hyper\ compared to the other algorithms.
Note in this connection that a trivial factorization algorithm that outputs for an input matrix 
$I\in\{0,1\}^{n\times m}$  the set $\mathcal{F}$ containing the $m$ rectangles corresponding to the columns
of $I$ will have a similar behavior in a sense, namely a slowly-growing curve of coverage quality which, nevertheless,
reaches full coverage (exact decomposition) with $k\leq m$.
None of \Tiling, \GreConD, and \GreEss\ suffers from this drawback of \Hyper.
\Tiling\ and \GreConD\ perform very similarly, confirming the evaluation results
of Algorithm 1 and \GreConD\ in \citep{BeVy:Dof} (cf. the description of \GreConD\ above).
One can see from the results that \GreEss\ performs best of these three algorithms on both synthetic and
%R4: uprava
real datasets, outperforming them  significantly, particularly in terms of the number of factors needed for exact decomposition.

%As we can see from the results, \GreEss performs best on  almost all the synthetic 
%and real data.
%In particular, \GreEss delivers the best approximation of $\mathrm{rank}_\mathrm{B}(I)$.
%%RR:
%%in most cases, being outperformed only by \Asso in case of Tic-tac-toe (a somewhat untypical
%%dataset according to the characteristics in Table \ref{tab:sprd}). 
%
%\Asso gives generally good results w.r.t. coverage by the first $k$ factors for relatively
%small $k$, for which the %M_: preklep, committed
%committed $E_o$ error is well %M_: preklep, compnesated
%compensated by a big drop
%in $E_u$. As mentioned above, however, \Asso generally does not perform well when
%large coverage, or exact decomposition---the limit case, is required.

From the point of view of the new strategy of \GreEss, which is based on the results regarding essential part of $I$, 
the following conclusions may be drawn.
%The geometry of BMF tells us that to decompose a Boolean matrix $I$ is equivalent to
%cover all $1$s by rectangles. 
Contrary to \Tiling, \Asso, \Hyper, \GreConD\ and \PaNDa, which all use different strategies of greedy coverage, 
but all aim at covering the most of the uncovered $1$s in $I$, \GreEss\ proceeds differently.
In its greedy coverage, \GreEss\ focuses on the essential $1$s in $I$ and considers them
as ``seeds'' of good factors. Such strategy is theoretically well justified, is fast, and leads
to improvement in quality of Boolean matrix factorization.

%--------------------
\subsection{Performance on Synthetic Data With Noise}
\label{sec:noise}

Noise in Boolean data is an issue discussed in the papers on BMF, see e.g.  
\cite{MiMiGiDaMa:TDBP,LuOrPe:Mtpbdn}.
In particular,  \PaNDa\ has been designed with the aim to perform well for data with noise.
The capability to factorize noisy data with \Asso\ has been demonstrated in \cite{MiMiGiDaMa:TDBP}.
In this section we provide the performance evaluation of \GreEss\ for noisy data and compare it with
\Asso\ and \PaNDa.
We use a scenario similar to those of \cite{MiMiGiDaMa:TDBP,LuOrPe:Mtpbdn}.
We performed the evaluation on synthetic datasets which are obtained by adding noise
to the datasets generated as those comprising Sets $1,\dots,6$ in Section \ref{sec:ac}.
In particular, we display the results for the datasets obtained by the same parameters  as those for 
Set $2$. The results are similar for the other parameters.
%Parametry pro experimenty se šumem byly: hustota matice A 0.1, hustota matice B 0.1 (typově stejné jako Set1), 
%průměrná hustota kolem 20\%, zkoušel jsem samozřejmě i jiné, ale výsledky jsou srovnatelné.

We observed the coverage quality of the datasets in a similar way as in Section \ref{sec:pa}.
The results are displayed in Figure \ref{fig:cr}.

%M: obrazek jsem posunul aby nezasahoval do conclusions
\begin{figure}[ht!]
\centering
\subfloat[\scriptsize Aditive noise---\GreEss]
{\label{fig:}\includegraphics[scale=0.31]{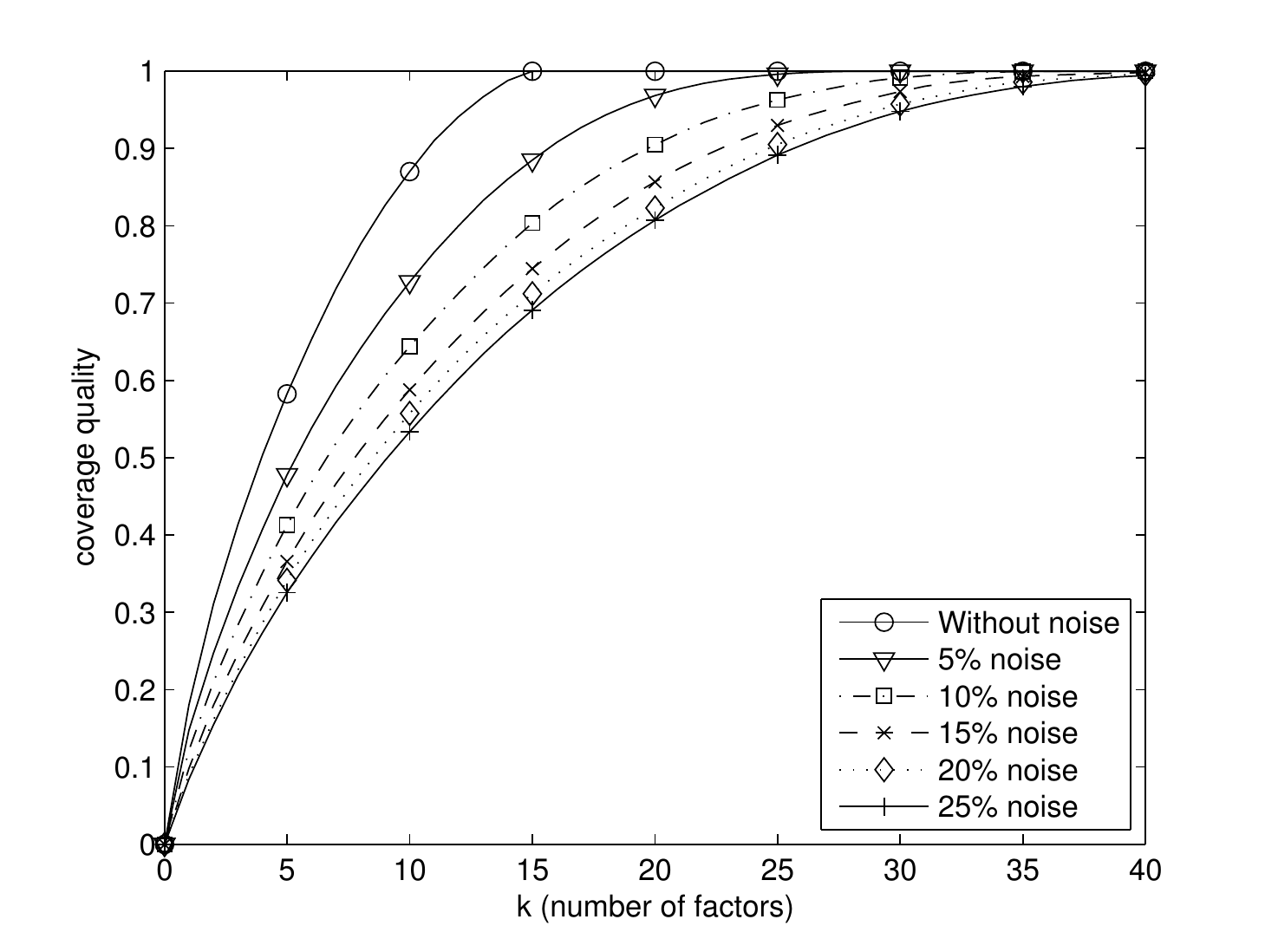}}
\subfloat[\scriptsize Aditive noise---\Asso]
{\label{fig:}\includegraphics[scale=0.31]{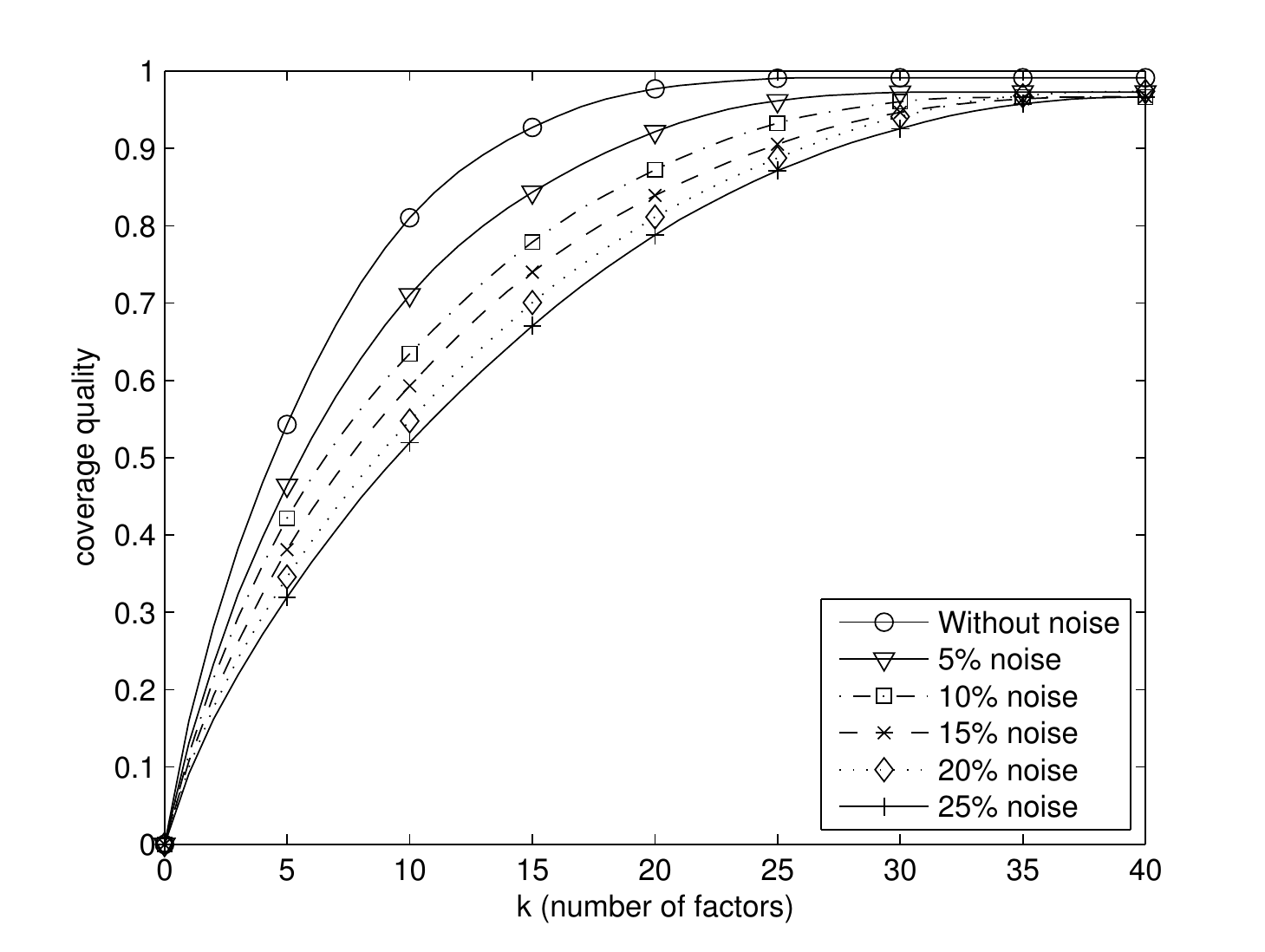}}
\subfloat[\scriptsize Aditive noise---\PaNDa]
{\label{fig:}\includegraphics[scale=0.31]{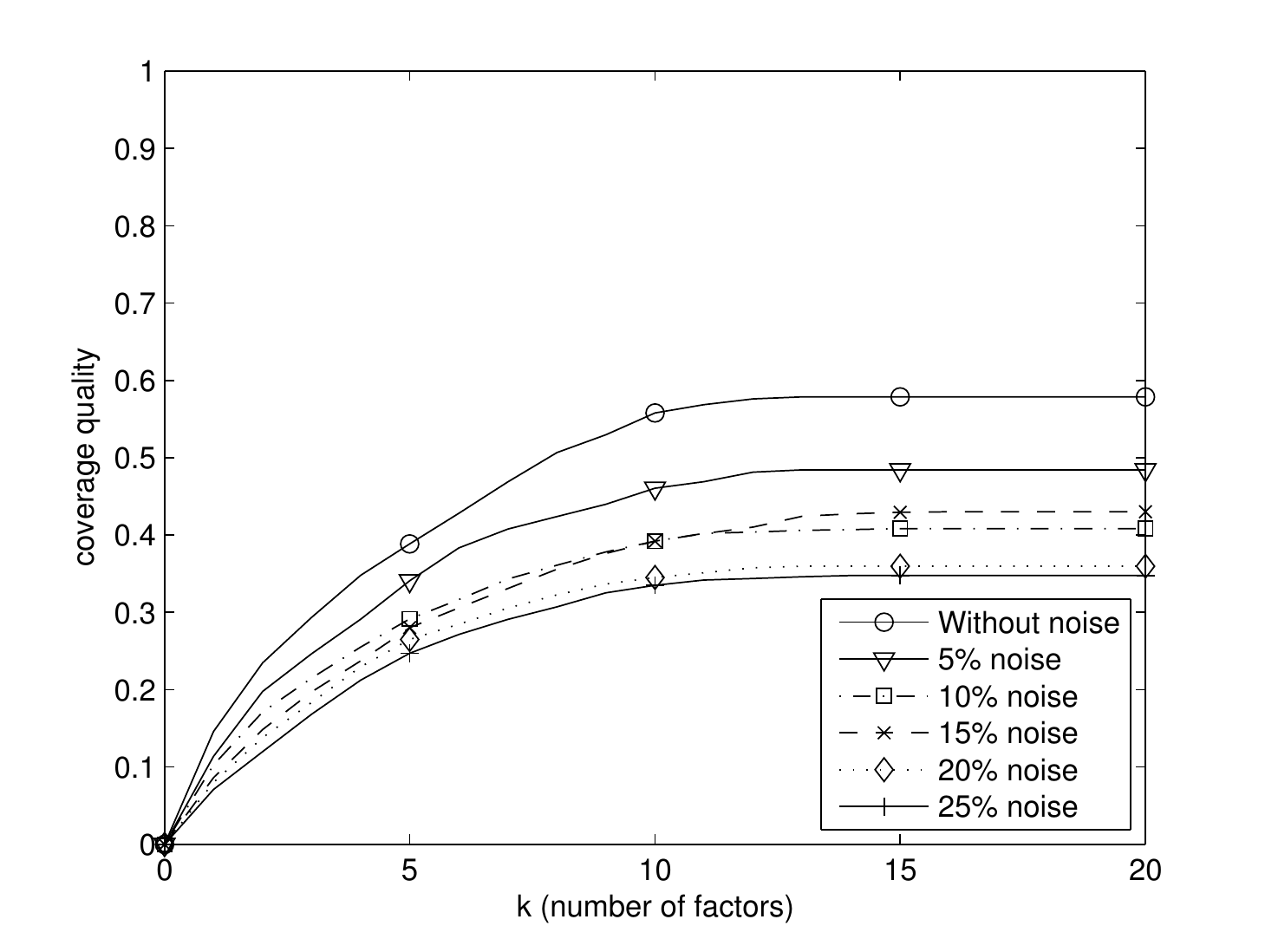}}

\subfloat[\scriptsize Substractive noise---\GreEss]
{\label{fig:}\includegraphics[scale=0.31]{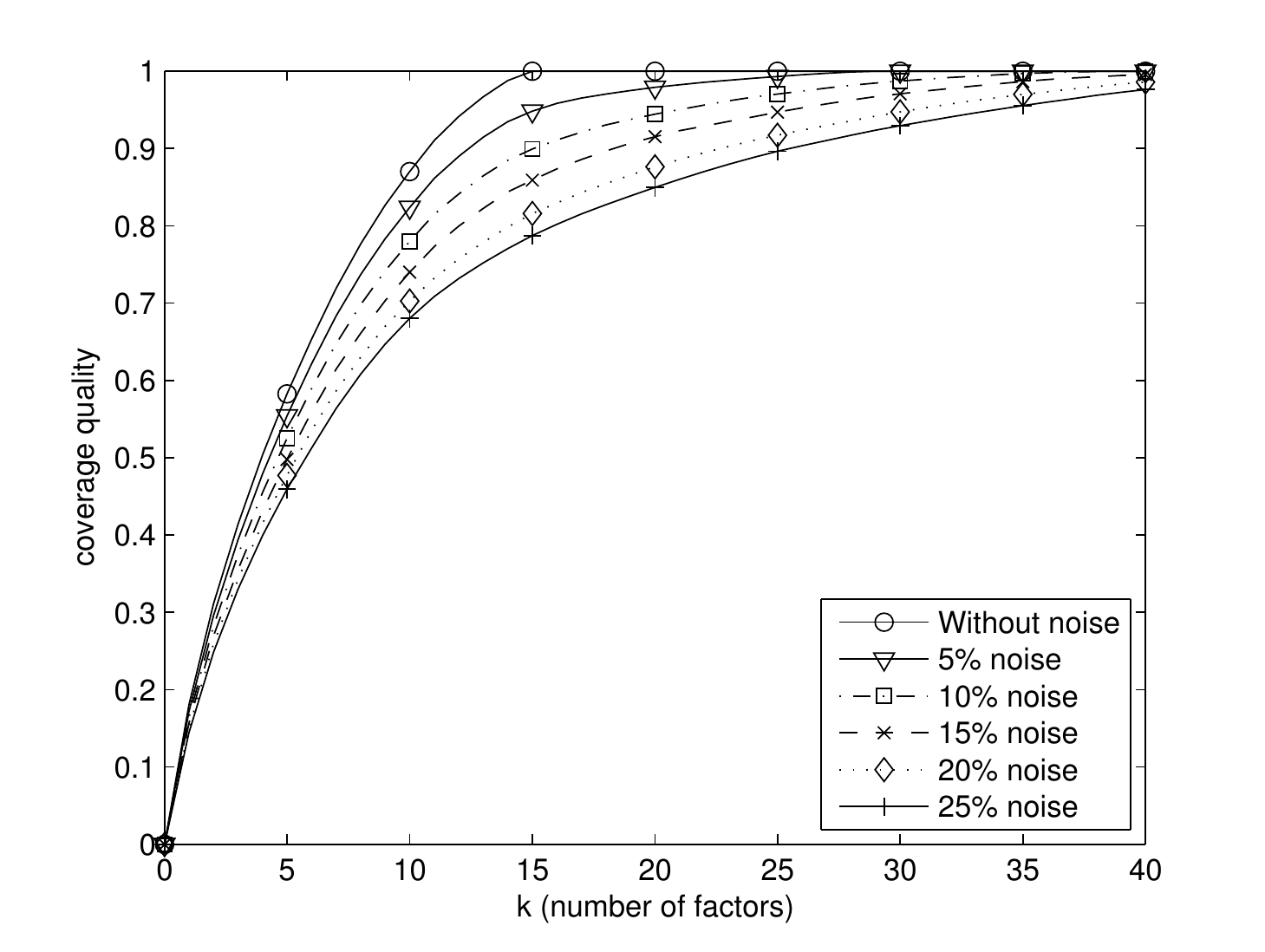}}
\subfloat[\scriptsize Substractive noise---\Asso]
{\label{fig:}\includegraphics[scale=0.31]{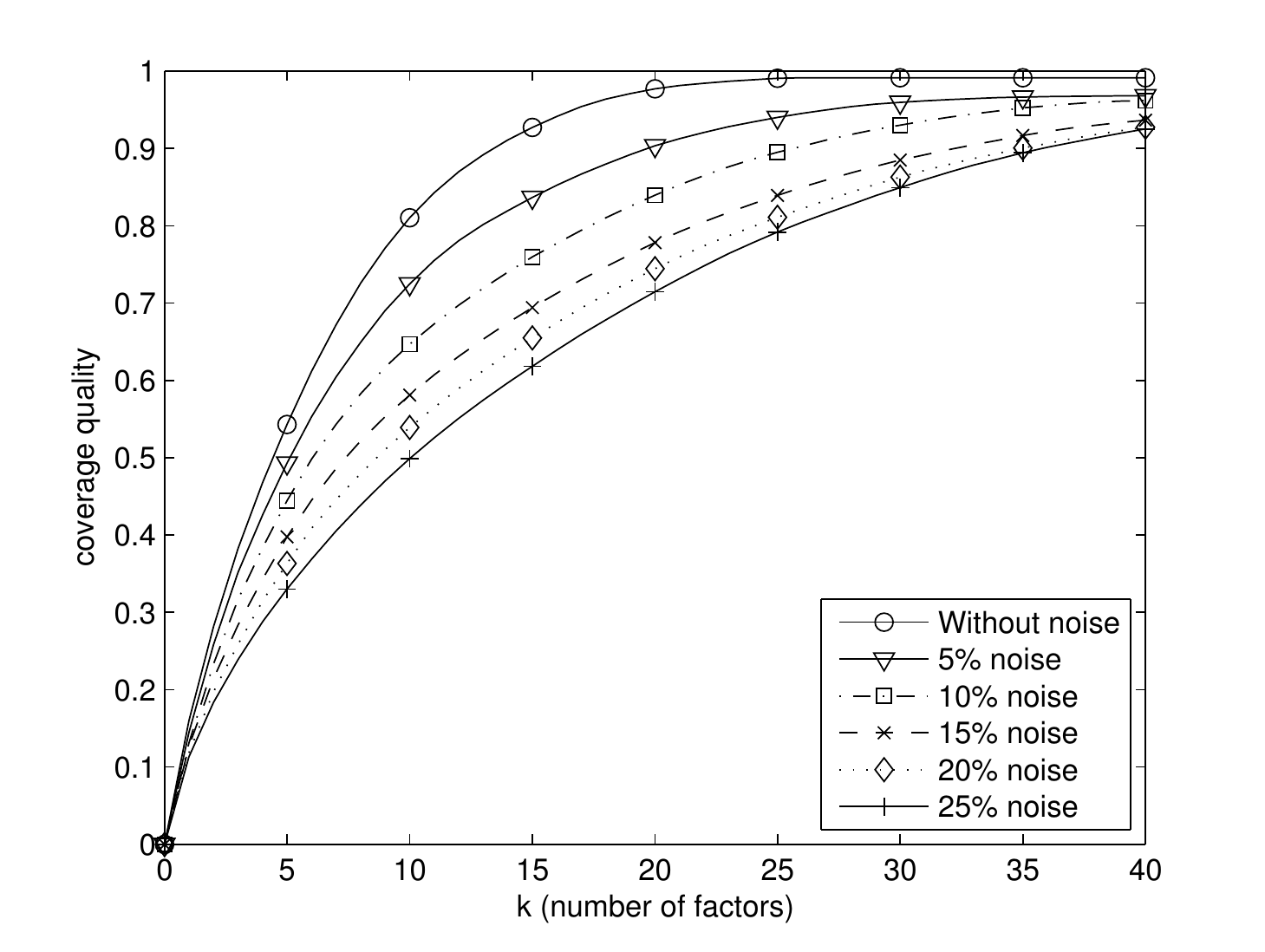}}
\subfloat[\scriptsize Substractive noise---\PaNDa]
{\label{fig:}\includegraphics[scale=0.31]{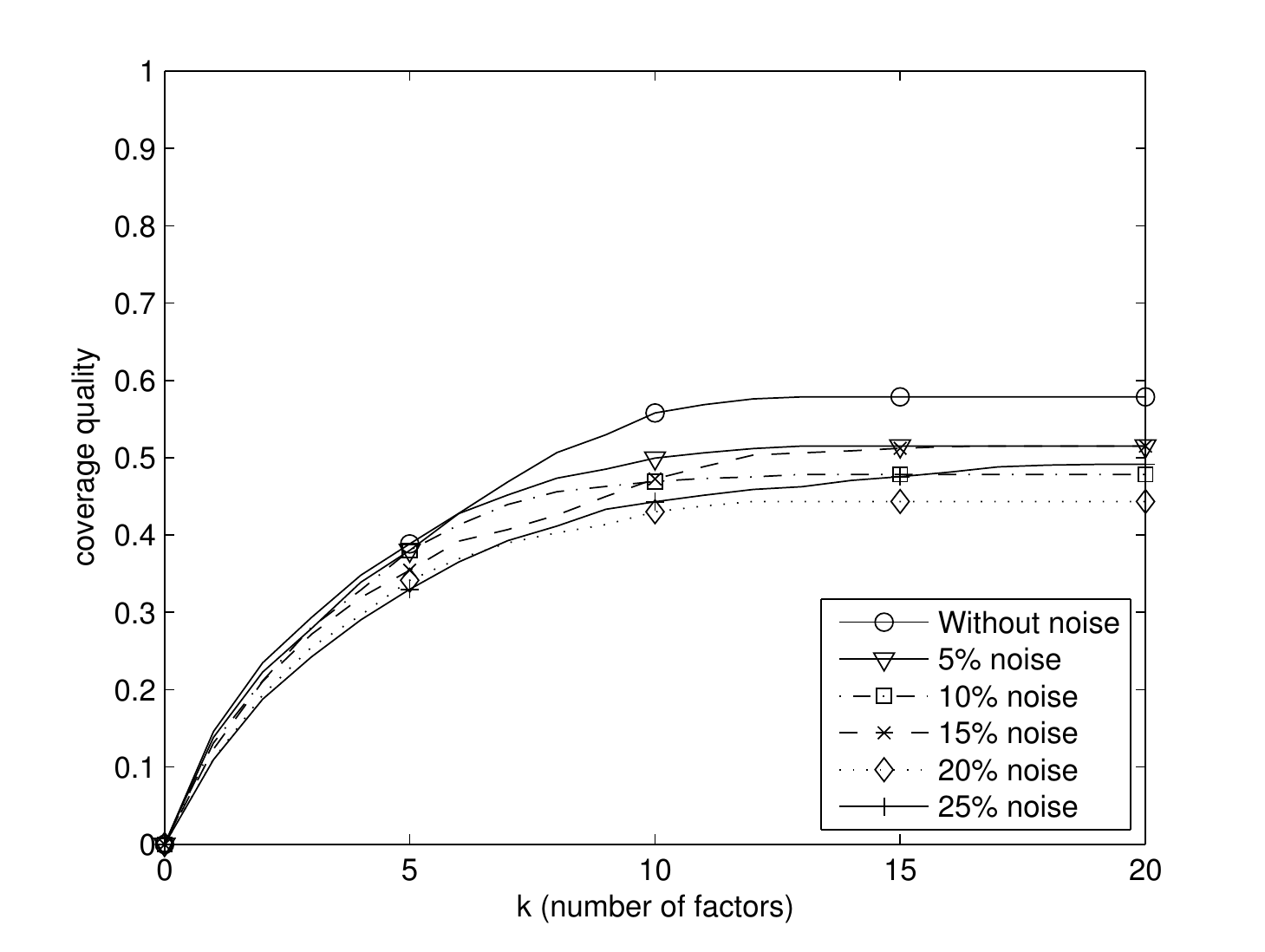}}

\subfloat[\scriptsize General noise---\GreEss]
{\label{fig:}\includegraphics[scale=0.31]{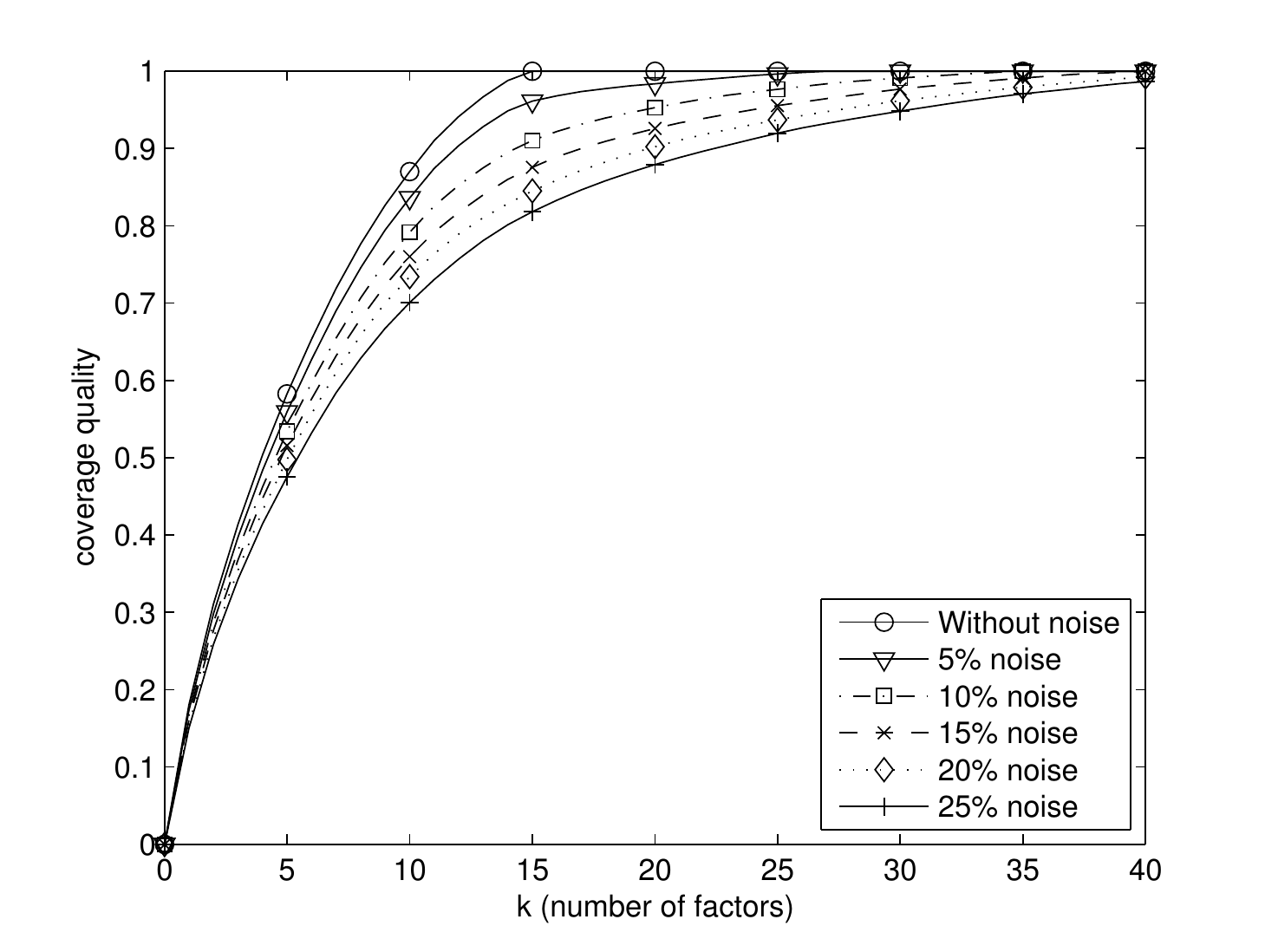}}
\subfloat[\scriptsize General noise---\Asso]
{\label{fig:}\includegraphics[scale=0.31]{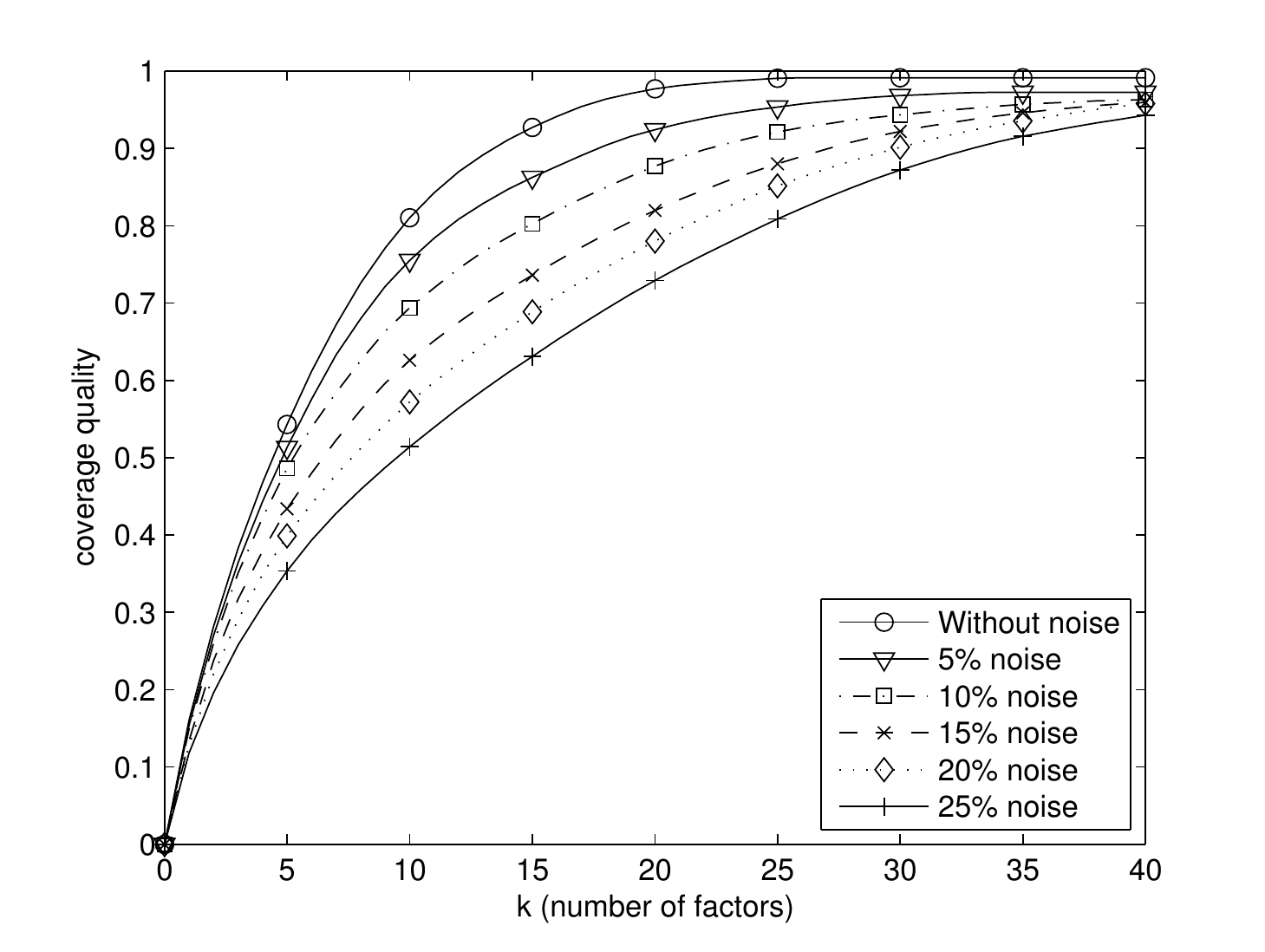}}
\subfloat[\scriptsize General noise---\PaNDa]
{\label{fig:}\includegraphics[scale=0.31]{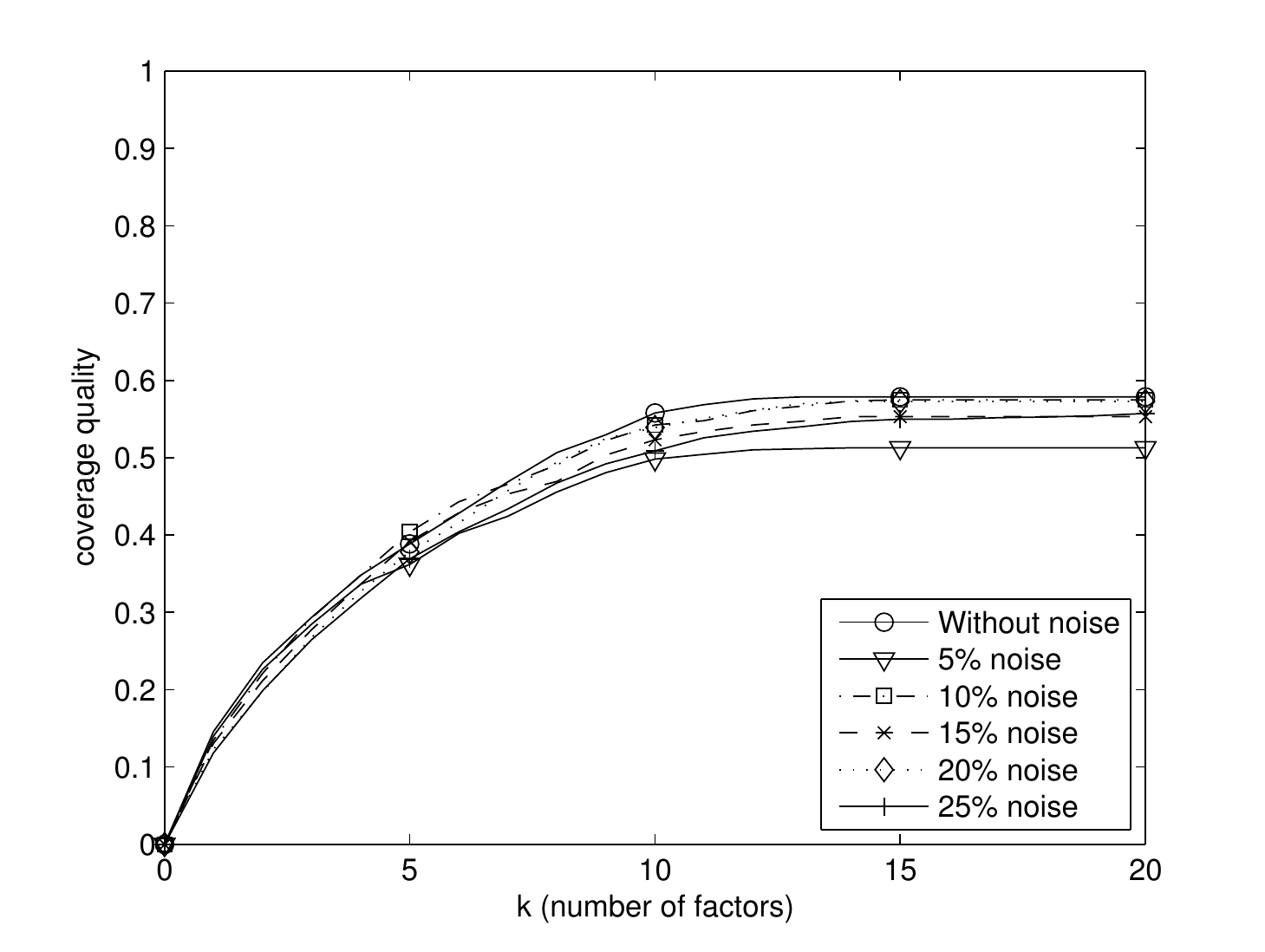}}

\caption{Coverage quality of the first $k$ factors (synthetic data with noise).}
\label{fig:cr}
\end{figure}

For each of the algorithms we provide results for additive noise, subtractive noise, 
and general noise, and for every type of noise we include six levels of noise,
from 0\% to 25\%.
Adding additive noise of $p\%$ to a Boolean matrix $I$ means that we flip at random 
with probability $p$ the entries in $I$ containing $0$ (change $0$ to $1$).
For subtractive noise, we flip the entries containing $1$ and for general noise we flip
at random all the entries.
The curves represent the coverage quality by the sets of the first $k$ factors
computed by the algorithms
as in Section \ref{sec:pa}. That is, the values for each curve are obtained as averages over 1000
particular datasets with the respective level of noise. 
We can see in Figure~\ref{fig:cr} that all the algorithms share the property that
as the level of noise increases, the curve of the coverage quality gets shifted down,
i.e. a larger number of factors is needed to explain a given portion of data.
A possible interpretation of the observed shifts is that
the larger the shift, the more sensitive the particular algorithm is for the particular type of noise.
In the context of the current view, according to which using only factors that are not allowed
to cover the entries of $I$ containing $0$ leads to sensitivity to noise,
the graphs show a somewhat surprising fact. Namely, the sensitivity to noise in the above sense for \GreEss, 
which uses only such factors covering $1$s,
turns out not to be larger than for \Asso\ and \PaNDa.
On the other hand, we believe that the current view of noise and sensitivity to noise in BMF
is limited and that the problem of noise needs a solid foundation.
For example, a natural question is whether and to what extent it is the case that  if a particular
algorithm discovers good factors in a given data, then when noise is added, the %M: preklep algorithm
algorithm still discovers
these or similar factors.
This question needs to be considered with care because adding significant amount of noise,
as in case of some experiments in the literature on BMF, may change the data to the extent that
it is much better explained by new, previously absent  factors.

%%%%%%%%%%%%%%%%%%%%%%%%%%%%%%%%%%%%%%%%%%%%
\section{Conclusions}

We presented new results on BMF that are based on examining the closure and
order-theoretic structures related to Boolean data. 
The results let us differentiate the role of entries of the input matrix and suggest
where to focus in computing  decompositions.
We proposed a new BMF algorithm, \GreEss, based on these results and provided
results of its experimental evaluation.
It turns out that %in terms of quality of the computed decompositions, 
the algorithm performs well both in terms of coverage of the input data
by the first $k$ factors (i.e. by a small number of the most important factors) 
and in terms of the number of factors needed for an exact decomposition of 
the input matrix (i.e. factors that fully explain the input data) and that \GreEss\ outperforms the
existing algorithms. % in this view.
The presented results, both theoretical and experimental,
emphasize the role of from-below factorization algorithms in BMF, of which \GreEss\ 
is an example.
%but also other two 
%algorithms, \Tiling and \GreConD, are particular examples.

An important topic for future research is to utilize further the present results
regarding essential parts of Boolean matrices and to further investigate  the 
role of entries of Boolean matrices for BMF.
In particular, it seems promising to explore the possibility to still reduce $\ess(I)$
to $\ess(\ess(I))$, and in general, to $\ess^p(I)$. 
Furthermore, the notion of $\varepsilon$-essential part shall be investigated for $\varepsilon>0$.
Another topic is to utilize as heuristics
other information that may be obtained from the intervals $\mathcal{I}_{ij}$, 
in particular the number  $|\mathcal{I}_{ij}|$ of concepts %in $\mathcal{B}(I)$ 
covering $\tu{i,j}$. This number is difficult to compute but our preliminary results indicate
that it may be approximated quickly. 
Note that the case $|\mathcal{I}_{ij}|=1$ corresponds
to so-called mandatory factors considered in \citep{BeVy:Dof}, i.e. factors that need
to be present in every exact decomposition of $I$.
%Regarding the strategies of BMF algorithms in general, our results offer the following
%perspective: MOZNA POZDEJI
An important topic is to extend the theoretical framework
%, which accounts for from-below factorizations, 
to general factorizations involving rectangles containing possibly 
$0$s, which are sometimes called fault-tolerant concepts or noisy tiles, see e.g. \citep{BePeRoBo:Cbmftpbd}.
An interesting goal is to extend the present results beyond Boolean data, namely to
 ordinal and semiring-valued data, see e.g. \citep{Bel:Odmerl} for general results on
closure structures and decompositions of such, more general data.
%\citep{BeGlVy:Oftwbdutc,Mie:Btf} consider BMF for triadic data.
Last but not least, let us mention that
%the analysis of 
three- and  multi-way data received a considerable attention recently.
\citep{BeGlVy:Oftwbdutc,Mie:Btf} present approaches to factorization of three-way Boolean
data. An extension of the present results to multi-way data seems another important research topic.

%In recent years, 
%the analysis of three- and generally multi-way data received a considerable attention.
%\citep{BeGlVy:Oftwbdutc,Mie:Btf} present approaches to factorization of three-way Boolean
%data. An  extension of the presented results to this case may bring similar improvements
%as for two-way Boolean data.

%Last but not least, the presented results on geometry of Boolean data may help

%-Frolov a NN pristup - mozna
%-Triadic

%\clearpage

%% The Appendices part is started with the command \appendix;
%% appendix sections are then done as normal sections
%% \appendix

%% \section{}
%% \label{}

%% References
%%
%% Following citation commands can be used in the body text:
%% Usage of \cite is as follows:
%%   \cite{key}         ==>>  [#]
%%   \cite[chap. 2]{key} ==>> [#, chap. 2]
%%

%% References with bibTeX database:

%\bibliographystyle{elsarticle-num}
%\bibliography{<your-bib-database>}

%% Authors are advised to submit their bibtex database files. They are
%% requested to list a bibtex style file in the manuscript if they do
%% not want to use elsarticle-num.bst.

%% References without bibTeX database:

%M: opraveny nove reference aby sedel format

%% konec bibliografie

%TADY END
\end{document}